\newcommand{\C}{\mathbb{C}}
\newcommand{\Z}{{\mathbb Z}}
\newcommand{\R}{{\mathbb R}}
\newcommand{\Tcal}{{\mathcal T}}
\newcommand{\X}{\mathcal{X}}
\renewcommand{\Pr}{{\mathbb P}}
\newcommand{\arr}{\longrightarrow}
\newcommand{\G}{\Gamma}
\newcommand{\alb}{\mathsf{X}}
\newcommand{\xs}{\alb^*}
\newcommand{\xo}{\alb^\omega}
\newcommand{\xmo}{\alb^{-\omega}}
\newcommand{\nuke}{\mathcal{N}}
\newcommand{\img}[1]{\mathop{\mathrm{IMG}\left(#1\right)}}
\newcommand{\lims}[1][G]{\mathcal{J}_{#1}}
\newcommand{\bim}{\mathfrak{B}}
\newcommand{\limg}{\mathcal{X}_G}
\newcommand{\si}{\mathsf{s}}
\newcommand{\cdim}{\mathop{\mathrm{ARdim}}}
\DeclareMathOperator{\diam}{diam}
\DeclareMathOperator{\supp}{supp}
\DeclareMathOperator{\Capc}{Cap}
\newcommand{\gbf}{\mathbf{g}}
\newcommand{\sbf}{\mathbf{s}}
\newcommand{\Tbf}{\mathbf{T}}
\newtheorem{thm}{Theorem}[section]
\newtheorem{prop}[thm]{Proposition}
\newtheorem{cor}[thm]{Corollary}
\newtheorem{lem}[thm]{Lemma}
\theoremstyle{definition}
\newtheorem{defn}[thm]{Definition}
\newtheorem{rem}[thm]{Remark}
\newtheorem{notation}[thm]{Notation}
\newtheorem*{thm*}{Theorem}
\newtheorem*{cor*}{Corollary}
\newcommand{\thistheoremname}{}
\newtheorem*{genericthm*}{\thistheoremname}
\newenvironment{namedthm*}[1]
  {\renewcommand{\thistheoremname}{#1}%
   \begin{genericthm*}}
  {\end{genericthm*}}
\newtheorem*{genericcor*}{\thistheoremname}
\newenvironment{namedcor*}[1]
  {\renewcommand{\thistheoremname}{#1}%
   \begin{genericthm*}}
  {\end{genericthm*}}
\title{Liouville property for groups and conformal dimension}
\date{\today}
\author{Nicol\'as Matte Bon}
\address{}
\author{Volodymyr Nekrashevych}
\author{Tianyi Zheng}
\thanks{The work of NMB was supported by the LABEX MILYON (ANR-10-LABX-0070) of Université de Lyon, within the program "France 2030" (ANR-11-IDEX-0007) operated by the French National Research Agency (ANR). VN was supported by NSF grant DMS2204379. TZ was supported by a Sloan research fellowship and NSF grant DMS2348143.}
\begin{document}

\maketitle

\begin{abstract}

Conformal dimension is a fundamental invariant of metric spaces, particularly suited to the study of self-similar spaces, such as spaces with  expanding self-coverings (e.g., Julia sets of complex rational functions). The dynamics of these systems are encoded by the associated iterated monodromy groups, which are examples of contracting self-similar groups. Their amenability is a well-known open question. We show that if $G$ is an iterated monodromy group, and if the (Ahlfors-regular) conformal dimension of the underlying space is strictly less than 2, then every symmetric random walk with finite second moment on $G$ has the Liouville property. As a corollary, every such group is amenable. This criterion applies to all examples of contracting groups previously known to be amenable, and to many new ones. In particular, it implies that for every sub-hyperbolic complex rational function $f$ whose Julia set is not the whole sphere, the iterated monodromy group of $f$ is amenable.

\end{abstract}

\section{Introduction}

Conformal dimension was introduced in the late 1980s by P.~Pansu in the study of quasi-isometries of hyperbolic spaces and related constructions (see, for example,~\cite {pansu}). The conformal dimension of a metric space is the infimum of the Hausdorff dimensions of spaces quasi-symmetric to it. Since quasi-isometries of hyperbolic spaces correspond to quasi-symmetries of their boundaries, the conformal dimension is a natural invariant of hyperbolic groups and their boundaries.
Since its definition, the conformal (and related Ahlfors-regular conformal) dimension has become an important invariant in geometric group theory and dynamical systems.  See~\cite{mackay_tyson} for a survey of its properties and applications.

Ahlfors-regular conformal dimension is especially useful and natural for self-similar metric spaces. Boundaries of hyperbolic groups are examples of such self-similar spaces. Another natural class of examples is metric spaces with an expanding (branched) covering map $f\colon \mathcal{J}\arr\mathcal{J}$ (for example Julia sets of hyperbolic complex rational functions).
Their quasi-conformal geometry is the subject of the works~\cite{haisinskypilgrim,haispilgr:subhyperbolic,haispilgr:inequiv} by P.~Ha\"{\i}ssinsky and K.~Pilgrim.

Expanding self-coverings are encoded by the associated \emph{iterated monodromy groups}, introduced in the early 2000s, see~\cite{nek:book,nek:dyngroups}. They are examples of \emph{contracting self-similar groups}. The self-covering is reconstructed from the iterated monodromy group as their \emph{limit dynamical system}. The potential importance of the conformal dimension to the study of contracting self-similar groups was observed in~\cite{haispilgr:subhyperbolic}.

The amenability of iterated monodromy groups has been a well-known open question since their introduction. In fact, amenability of the iterated monodromy group of the complex polynomial $z^2-1$ (also known as the \emph{Basilica group}) was asked even before the iterated monodromy groups were defined (see~\cite{zukgrigorchuk:3st}).

Amenability of the Basilica group was proved by L.~Bartholdi and B.~Vir\'ag in~\cite{barthvirag}. Their proof was later generalized to include iterated monodromy groups of all post-critically finite complex polynomials in~\cite{bkn:amenability} (and more generally, all groups generated by \emph{bounded} automata). Their methods were further extended in~\cite{amirangelvirag:linear} to all groups generated by automata of \emph{linear activity} (in the sense of S.~Sidki~\cite{sid:cycl}).

Amenability was proved in these papers by constructing a self-similar random walk with step distribution $\mu$  on a special family of ``mother groups'', and using the self-similarity to prove that the asymptotic entropy (equivalently, speed) of the $\mu$-random walk is zero (see also~\cite{Kai-munchaussen}). By the results  of  V.~Kaimanovich and A.~Vershik \cite{Kai-Ver} and Y.~Derriennic \cite{Der} this implies the triviality of the Poisson boundary of the corresponding random walk, also known as the \emph{Liouville property} that all bounded $\mu$-harmonic functions are constant, which in turn implies  amenability of the group.

A different approach to prove the amenability of  groups is based on the notion of extensively amenable actions, first introduced by K. Juschenko and N. Monod \cite{juschenkomonod}. This method was used in  \cite{YNS} to show amenability of a class of iterated monodromy groups. This approach allows one to prove the amenability of iterated monodromy groups of complex rational functions with  ``crochet'' Julia sets, and of all contracting self-simlar groups generated by automata of \emph{polynomial activity}, see~\cite{nekpilgrimthurston}. The framework of extensive amenability is not based on the study of the Liouville property for random walks, but random walks are implicitly present in it, as one of the main conditions used to prove amenability is the recurrence of the induced random walk on an orbit of some action of the group. 

We show in our paper how conformal geometry can be used to prove the Liouville property for symmetric random walks under suitable moment conditions on many contracting groups (equivalently, iterated monodromy groups of expanding covering maps).

Namely, we prove the following.

\begin{thm} \label{t-intro-Liouville}
    Let $(G, \bim)$ be a finitely generated contracting self-similar group, and suppose that its limit space has  Ahlfors-regular conformal dimension $\alpha$, with $\alpha<2$. Then for every symmetric measure $\mu$ on $G$ having finite $\beta$-moment for some $\beta>\alpha$, the $\mu$-random walk is Liouville. In particular, any such group is amenable.
\end{thm}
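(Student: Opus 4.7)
The plan is to verify that the asymptotic entropy (equivalently, speed) of the $\mu$-random walk on $G$ vanishes; by the Kaimanovich--Vershik criterion this gives the Liouville property, and amenability of $G$ is then immediate.

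The starting point is the choice of a suitable metric on the limit space. Using the definition of Ahlfors-regular conformal dimension together with the hypothesis $\alpha<2$, I would fix an exponent $\alpha'$ with $\alpha<\alpha'<\min(\beta,2)$ and equip $\lims$ with a metric $d$ quasi-symmetric to the visual metric in which $\lims$ is Ahlfors $\alpha'$-regular. This metric is compatible with the self-similar tiling inherited from $\bim$: level-$n$ tiles have diameter comparable to $\rho^n$ for some $\rho<1$, and the action $G\curvearrowright\lims$ respects the tile decomposition.

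Next I would translate the $\beta$-moment hypothesis on $\mu$ into integrability of the increment of the induced walk on $\lims$. The contracting property of $\bim$ implies that for $g\in G$ of word length $r$, all sufficiently deep sections of $g$ lie in the finite nucleus $\nuke$, so $g$ acts nontrivially only on tiles of a scale polynomially controlled by $|g|_G$. Combining this with Ahlfors $\alpha'$-regularity and the inequality $\beta>\alpha'$ should yield a uniform $\alpha'$-integrability estimate for the displacements $d(x,g(x))$ under $\mu$, valid at every scale of the tiling.

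The core of the argument is a multi-scale entropy estimate. For each level $k$, let $\pi_k:G\to\Aut(\alb^k)$ be the action on the $k$-th level of the rooted tree; one decomposes $H(X_n)=H(\pi_k(X_n))+H(X_n\mid\pi_k(X_n))$ and exploits self-similarity: conditionally on $\pi_k(X_n)$, the element $X_n$ is determined by its collection of section walks, each of which is itself a walk on a smaller geometric scale. Iterating across scales $k$ and feeding in the $\alpha'$-integrability from the previous step should yield a recursive inequality whose solution gives $H(X_n)=o(n)$ precisely when $\alpha'<2$; the strict inequality enters as the threshold that makes a scale-by-scale geometric series of capacity contributions summable, very much in the spirit of the dichotomy between sub- and super-diffusive behaviour in dimension two.

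The main obstacle will be this last step: setting up the right recursion and controlling the accumulation of error across infinitely many scales. One must ensure that the section walks inherit enough moment control to iterate the argument uniformly in the scale, and that the conditional entropies $H(X_n\mid\pi_k(X_n))$ are estimated in a way that genuinely uses the geometry of $(\lims,d)$ rather than only the combinatorics of the tree. This is where the strict inequality $\alpha<2$ is essential and where the link between the word metric on $G$ and the quasi-symmetric metric on $\lims$---immediate in one direction from contraction, more delicate in the other---must be exploited most carefully.
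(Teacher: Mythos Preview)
Your outline has the right wrapper (Kaimanovich--Vershik, an Ahlfors-regular metric on $\lims$, Proposition~\ref{p-Munchaussen}) but misses the two mechanisms that actually make the proof go through, and the ``multi-scale recursion'' you propose is not what is carried out. In the paper the hypothesis $\alpha<2$ enters in two concrete ways, neither of which appears in your plan. First, it forces the \emph{topological} dimension of $\lims$ to be at most one, so by Theorem~\ref{th:onedim} the group has a contracting model $\Delta_0$ which is a graph; only on a graph can one speak of edge-crossings, and this is what makes the notion of a \emph{traverse} (Definition~\ref{d-traverses}) available. Theorem~\ref{th:traverses} then bounds $\sum_{v\in\alb^n}l(g|_v)$ by the number of traverses of the word plus $C|\alb|^n$. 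Second, $\alpha<2$ is exactly the condition under which the $\ell^2$-capacity between the relevant regions of the level-$n$ Schreier graph tends to zero (Theorem~\ref{th:energies}, Corollary~\ref{c-resistance-traverse}); the exponent $2$ is singled out because $\ell^2$-capacity is the escape probability (Lemma~\ref{l-capacity-rw}), which directly controls the expected number of traverses. Your allusion to ``capacity contributions'' does not identify this link.

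Equally important, the paper does \emph{not} iterate the entropy decomposition across scales. The section processes $(\gbf_t|_v)_t$ are not independent random walks with a rescaled step distribution, and no self-similar recursion for their entropies is set up; instead $\sum_{v\in\alb^n}l(\gbf_t|_v)$ is bounded \emph{directly} at a fixed level $n$ via traverses and capacities, and then $n\to\infty$. For infinitely supported $\mu$ there is an additional ingredient you omit: the critical exponent $p_c(G,\bim)$ and the inequality $p_c\le\cdim\lims$ (Theorem~\ref{th:critexp}), used together with the $\ell^p$-contraction estimate of Proposition~\ref{pr:largescalesumX} to control the long-jump contribution that the traverse count does not see. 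Without the one-dimensional graph model, the traverse bound, and the $\ell^2$-capacity/random-walk link, your outline does not yet contain a mechanism that would force $H(\gbf_t)=o(t)$.
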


This implies the following (the relevant terminology on rational functions is recalled in \S\ref{ss:img}). 

\begin{cor}
    Let $f$ be a sub-hyperbolic complex rational function. If the Julia set of $f$ is not the whole sphere (in particular, if $f$ is hyperbolic), then its iterated monodromy group is amenable.
\end{cor}

Our results cover many new examples of amenable self-similar groups. Namely, all previously known contracting amenable groups had limit spaces of conformal dimension equal to 1 (this is true for any contracting group which can be generated by an automaton of polynomial activity, see Theorem \ref{th:polactivity}). In this case,  Theorem \ref{t-intro-Liouville} implies that any random walk on them generated by symmetric measures of finite $(1+\varepsilon)$-moment is Liouville, encompassing various previous results for some more restricted classes of groups and measures.
Beyond this case, Theorem \ref{t-intro-Liouville} can be applied to contracting groups whose amenability was an open question until now, in particular to groups whose limit space is homeomorphic to the Sierpi\'nski carpet. As shown in \cite{haispilgr:inequiv}, there exist hyperbolic rational maps with Sierpi\'nski carpet Julia sets whose conformal dimensions are arbitrarily close to 2 (the corresponding groups must therefore be generated by automata of exponential activity). 

\subsection*{Critical exponent and moment conditions}
The moment condition on the measure in Theorem \ref{t-intro-Liouville} is tightly related to another numeric invariant of a contracting self-similar group: its \emph{critical exponent}. For every $p\in (0, \infty]$ we define the \emph{$\ell^p$-contraction coefficient} $\eta_p$ of a finitely generated contracting group $(G, \bim)$. It is the exponential rate of decay of the $\ell^p$-norm of the vector of lengths of the sections $g|_v$ for the vertices $v$ of  the $n$th level of the tree. The condition for a self-similar group to be contracting is equivalent to the condition $\eta_\infty<1$. The function $p\mapsto \eta_p$ is continuous and non-increasing. The critical exponent $p_c(G, \bim)$ is the infimum of the values $p$ for which $\eta_p<1$. It is a measure of the complexity of the self-similar group. For example, for any $\varepsilon>0$, the action on the tree of every element $g\in G$ of length $n$ can be explicitly described with $O(n^{p_c+\epsilon})$ bits of information (more precisely, the \emph{portrait} of $g$ has size $O(n^{p_c+\epsilon})$, see Proposition \ref{prop:portraisize}). In particular, $p_c<1$ implies sub-exponential volume growth. In our setting, such complexity bounds can be used to control the contribution to entropy from long jumps of the random walk. 
In analogy to critical constant for recurrence introduced in \cite{erschler-critical}, for a finitely generated group $G$ equipped with a word metric, we define the (symmetric) {\it critical constant for the Liouville property}, denoted by $\rm{Cr}_{Liouv}(G)$, to be the infimum of $p$, such that all symmetric random walks with finite $p$-moment are Liouville. For a contracting group $(G, \bim)$, we denote by $\cdim(G, \bim)$ the Ahlfors-regular conformal dimension of its limit space. Between these critical quantities and conformal dimension, we show the following inequality.  
\begin{thm}\label{th-intro-ineq}
Let $(G, \bim)$ be a finitely generated contracting self-similar group such that $\cdim(G, \bim)<2$. Then
\[
\rm{Cr}_{Liouv}(G)\le p_c(G, \bim)\le \cdim(G, \bim).
\]
\end{thm}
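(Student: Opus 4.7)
The plan is to establish the two inequalities separately. The right-hand inequality $p_c(G,\bim)\le\cdim(G,\bim)$ is a purely geometric bound, tying the $\ell^p$-contraction coefficients of the self-similar group to the metric structure of the limit space. The left-hand inequality $\mathrm{Cr}_{Liouv}(G)\le p_c(G,\bim)$ refines the Liouville criterion of Theorem \ref{t-intro-Liouville} by using the critical exponent in place of the conformal dimension as the moment threshold.

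For $p_c\le\cdim$, fix $\beta>\cdim(G,\bim)$ and choose a metric $d$ on the limit space, quasi-symmetric to the natural visual one and Ahlfors $\beta$-regular. Using the contracting model framework of \cite{nek:models,nek:dimension}, vertices $v\in\alb^n$ parametrize tiles $T_v\subset\limg$, and $\beta$-Ahlfors regularity yields the key summation bound $\sum_{v\in\alb^n}(\mathrm{diam}_d T_v)^\beta\le C$ uniformly in $n$. The main step is a metric--combinatorial inequality relating the word length $|g|_v|$ of a section to the scale $\mathrm{diam}_d T_v$ of the corresponding tile: a uniform bi-Lipschitz-type control on how generators act in the chosen metric, supplied by the contracting model, allows one to bound $|g|_v|$ by $|g|$ times a factor controlled by the tile scale. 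Raising to the $\beta$-th power and summing over $v\in\alb^n$ gives
\[
\sum_{v\in\alb^n}|g|_v|^\beta \le C_\beta\,\theta^{n\beta}\,|g|^\beta
\]
for some $\theta<1$, so $\eta_\beta<1$, hence $p_c\le\beta$, and letting $\beta\searrow\cdim(G,\bim)$ completes the bound.

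For $\mathrm{Cr}_{Liouv}\le p_c$, fix $p>p_c(G,\bim)$ so that $\eta_p<1$, and let $\mu$ be a symmetric measure on $G$ with finite $p$-moment. By the M\"unchhausen-type entropy criterion (Proposition \ref{p-Munchaussen}), it suffices to prove that the expected total section length at level $n$ of a sample $\gbf_t$ of the $\mu$-random walk grows in $t$ at a rate tending to $0$ as $n\to\infty$. We split single-step increments into short jumps (word length at most a threshold $N$) and long jumps. For short jumps, the argument of Theorem \ref{t-intro-Liouville} applies: Theorem \ref{th:traverses} bounds the $n$-th level section length in terms of the number of traverses of the random walk across certain separated regions of the Schreier graph, and Theorem \ref{th:energies} together with the standard random-walk/capacity identity shows that this expected number of traverses has a prefactor decaying in $n$. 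For long jumps, $\eta_p<1$ gives $\|(|s|_v|)_{v\in\alb^n}\|_p\le C\eta_p^n|s|$, so finiteness of the $p$-moment of $\mu$ bounds the total expected contribution by $C'\eta_p^n\,\mathbb{E}_\mu[|s|^p]^{1/p}$. Tuning $N$ as a function of $n$ balances the two contributions and yields the desired vanishing.

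The main obstacle is the geometric inequality $p_c\le\cdim$: translating between the combinatorial word length of a section and the metric diameter of a tile in a quasi-symmetric Ahlfors-regular model requires the full contracting-model machinery of \cite{nek:models,nek:dimension}, and in particular a uniform control of the way generators act in the chosen metric; without it, individual generators could have sections whose length is incommensurate with the tile scale. The long-jump/short-jump decomposition for the Liouville part is more standard, but one must still carefully choose the threshold $N$ so that both contributions decay simultaneously in $n$.
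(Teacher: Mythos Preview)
Your overall two-step strategy matches the paper exactly: the inequality $p_c(G,\bim)\le\cdim(G,\bim)$ is the content of Theorem~\ref{th:critexp}, and $\mathrm{Cr}_{Liouv}(G)\le p_c(G,\bim)$ is Theorem~\ref{t-Liouville}. The sketch for $p_c\le\cdim$ is in the right spirit, though vague at the key point: the paper does not bound $l(g|_v)$ by ``$l(g)$ times a single tile-scale factor'', but rather writes $g=s_m\cdots s_1$, tracks the path $v=u_0,u_1,\dots,u_m$ on level $n$, and shows $l(g|_v)\le C\sum_{i=1}^m R_{i,v}+C$ where each $R_{i,v}$ is comparable to the diameter of the tile neighborhood $T(\xi_{i,v},n)$. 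Ahlfors regularity then controls $\sum_v\sum_i R_{i,v}^\alpha$ by a constant, and H\"older plus $M_n:=\sup_\xi\diam T(\xi,n)\to 0$ gives the $\beta$-contraction. Your shortcut hides exactly this path-summation step.

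The Liouville half has a genuine gap. Proposition~\ref{p-Munchaussen} requires a bound on the $\ell^1$-sum $\mathbb{E}[\sum_{v\in\alb^n} l(\gbf_t|_v)]$, but your long-jump estimate only controls the $\ell^p$-norm $\|(l(s|_v))_{v\in\alb^n}\|_p\le C\eta_p^n l(s)$. Passing from $\ell^p$ to $\ell^1$ via H\"older costs a factor $|\alb|^{n(1-1/p)}$, and there is no reason for $|\alb|^{1-1/p}\eta_p<1$ to hold. Also, your decomposition by the \emph{word length} of the increment cannot be made to interact with Theorem~\ref{th:traverses}, which bounds the sections of the whole word, not of a subword obtained by deleting the long steps.

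The paper's fix is to decompose not by word length of the increment but by the length of the corresponding geodesic segment $\delta_i$ in the one-dimensional contracting model $\Delta_0$. Segments with $|\delta_i|<1/3$ are absorbed into the traverse count exactly as in the finitely supported case. Segments with $|\delta_i|\ge 1/3$ contribute at most $3^{p-1}\sum_{v:\,|\delta_i(v)|\ge 1/3}|\delta_i(v)|^p$, and the point of Proposition~\ref{pr:largescalesumX} is that this quantity, \emph{summed over all levels $n$}, is bounded by $C\,d(\xi_i,\xi_{i-1})^p\le C'l(\sbf_i)^p$. Taking expectations and using the finite $p$-moment, the series over $n$ converges, so the $n$-th term tends to $0$. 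This sidesteps the $\ell^p$-to-$\ell^1$ obstruction because on the set $\{v:|\delta_i(v)|\ge 1/3\}$ one already has $|\delta_i(v)|\le 3^{p-1}|\delta_i(v)|^p$. Your threshold $N=N(n)$ idea is morally related (since $|\delta_i|\asymp \lambda^n l(\sbf_i)$), but without this restriction to long segments the $\ell^p$-contraction alone does not give a usable $\ell^1$ bound.
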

Then Theorem \ref{t-intro-Liouville} above is a consequence of this inequality. The second inequality is not sharp in general, for example for the Grigorchuk group we have $\cdim(G, \bim)=1$ and $\rm{Cr}_{Liouv}(G)=p_c(G, \bim)\simeq 0.7674$; see the discussion after Theorem \ref{th:critexp}.   The first inequality is strict for some examples where the group $G$ is virtually nilpotent: for instance if $(G, \bim)$ is the \emph{adding machine} (so that $G\simeq \Z$) then $\rm{Cr}_{Liouv}(G)=0$ and $p_c(G, \bim)=1$. We do not know whether this is essentially the only source of examples.

\medskip
\subsection*{Further background and comparison with previous work}

 Let us give some further insight on previous results on amenability and the Liouville property for self-similar groups, and compare them with Theorem \ref{t-intro-Liouville}. Recall that a contracting group $G$ is generated by a finite state-automata over a finite alphabet $\alb$, and has a natural action on the associated rooted tree $\alb^*$ whose vertex set consists of finite words. For every integer $n$, the action of an element $g\in G$ on $\alb^*$ is determined by two pieces of information: the permutation induced on the $n$-th level of the tree, and associated collection of {sections} $(g|_v)_{v\in \alb^n}$, which describe the action of $g$ on the subtrees rooted at level $n$. 
 
In order to show vanishing of the asymptotic entropy of the random walk associated to a measure $\mu$, one is immediately led to study  the sections $\gbf_t|_v$  of a sample $\gbf_t=\mathbf{h}_t\cdots \mathbf{h}_1$ of the $\mu$-random walk. More precisely, it is enough to show that the rate of growth of the sum of the entropies (or alternatively, the word lengths) of  all sections $\gbf_t|_v$ at level $n$ is bounded by some constant which tends to 0   as $n$ tends to infinity (Proposition \ref{p-Munchaussen}).  The sequence of sections $(\mathbf{g}_t|_v)_{t\ge 1}$ satisfies the cocycle rule $\mathbf{g}_{t+1}|_v=(\mathbf{h}_{t+1}|_{\gbf_t(v)})\cdot ( \gbf_t|_v)$, so in order to study it, one is also led to study the behaviour of the process $(\gbf_t(v))_{t\ge 1}$, which performs a random walk on the finite Schreier graph associated with the $n$-th level of the tree.

These observations (in various related formulations) have already been the starting point of previous results establishing the Liouville property for random walk on groups generated by automata of bounded or linear activity \cite{barthvirag, bkn:amenability,amirangelvirag:linear, AAMV}. This means that for every element $s$ in a generating set of the group, the number of vertices $v\in \alb^n$ such that the section $s|_v$ is non-trivial is bounded \cite{barthvirag, bkn:amenability, AAMV}, or grows linearly with $n$ \cite{amirangelvirag:linear}. This feature was used in \cite{barthvirag, bkn:amenability, amirangelvirag:linear} to construct special classes of self-similar random walks, for which the distribution of the process $\gbf_t|_v$ can be analysed by explicit recursive computations based on the form of the automaton. Self-similarity is a very restrictive condition in this case, and it is not clear how to construct such self-similar random walks in general (it seems that a finitely supported self-similar step distribution does not exist in most cases), so in \cite{bkn:amenability,amirangelvirag:linear} the corresponding groups have to be embedded into special ``mother groups'' for which such a self-similar random walk can be constructed. 

For groups generated by bounded automata,  this self-similarity restriction on the random walk was removed  in \cite{AAMV}, by showing that every symmetric finitely supported measure generates a Liouville random walk. This was done by analysing explicitly the Schreier graphs of the action of the group on finite levels of the tree. In fact in this case, to understand the behaviour of the sections $\gbf_t|_v$, it is enough to understand the visits of $(\gbf_t(v))$ to some uniformly bounded subsets of ``singular points'' on the graphs (namely the vertices $w$ such that the section $h|_w$ is non-trivial for some $h$ in the support of the measure). The length of sections $\gbf_t|_v$ is controlled by the number of \emph{traverses} of the random walk between pairs of singular points lying far away on the Schreier graph, which in turn can be bounded by estimating the $\ell^2$-\emph{capacity} (or \emph{effective conductance}). 

The existence of such special finite sets of ``singular points'' for each group element can be interpreted in terms of germs of the action on the boundary of the tree $\alb^\omega$, and is also crucial for the application of the method of extensive amenability \cite{juschenkomonod, YNS, amirangelvirag:quadratic, nekpilgrimthurston}. The idea of this method is to identify a large amenable subgroup $H$ of the group whose action has no singular points, and then to exploit the recurrence of the Schreier graphs to control the expected number of singular points along the random walk, which allows us to deduce amenability from the amenability of $H$.

For a typical group as in Theorem \ref{t-intro-Liouville}, the approaches outlined above meet two related difficulties. First, in many cases (e.g., for the iterated mondromy group of a sufficiently general hyperbolic rational function), any explicit automaton generating the group will necessarily have an exponential activity and  might be quite complicated; considerations based on the form of such an automaton would quickly become intractable. Second, there is no natural ``finite set of singular points'' associated to each group element  (the set of singularities will rather correspond to a Cantor subset of the boundary of the tree). For a contracting self-similar group, the finiteness of the singular points seems to be related to the existence of countable separating subsets in the limit space. This is a restrictive condition limited to examples with limit space of conformal dimension 1, see  \cite{nekpilgrimthurston}. 

\subsection*{Outline of the proof} Our proof of Theorem \ref{th-intro-ineq} also proceeds by reducing the problem to the analysis of random walk on Schreier graphs, but this is achieved by establishing direct relationships between the properties of a contracting self-similar group, and those of its limit space. 
 We crucially use the fact that the finite Schreier graphs can be seen as approximations of the limit space. There are three main ingredients in our proof.

The first ingredient, purely geometric, is Theorem \ref{th:traverses}, which estimates  the total length of $n$-th level sections of a word $g=s_t\cdots s_1$  in terms of the  paths that the word describes on the corresponding Schreier graph. It says that in order for the total length of sections to be large, these paths must traverse many times between certain regions which approximate some disjoint parts of the limit space (which roughly speaking correspond to different \emph{tiles}). The precise identification of these regions requires the framework of \emph{contracting models} developed in~\cite{nek:models,nek:dimension}.

The second ingredient is Theorem \ref{th:energies}, where the assumption that the conformal dimension of the limit space is less than 2 is used to show that the $\ell^2$-capacity between such disjoint regions must tend to 0 as $n\to \infty$. This is reminiscent of the results of M. Carrasco \cite{carrasco} and J. Kigami \cite{kigamibook}, which relate the conformal dimension of a metric space to potential theory on graphs that approximate the space (although our proof of Theorem \ref{th:energies} is self-contained). 
These two ingredients put us in a position to apply a traverse-counting argument to show the vanishing of random walk asymptotic entropy, at least for symmetric finitely supported measures.

The third ingredient is the study of critical exponent (as described above), which allows us to go beyond the case of finitely supported measures.

\subsection*{On return probability} Let us also mention that it is known that for symmetric random walks with finite second moment, sufficiently slow decay of return probabilities, namely $\mu^{(2n)}(id)$ decaying slower than $\exp(-n^{1/2})$, implies that the $\mu$-random walk is Liouville, see \cite{PZ20}. However this result is not applicable in the study of contracting self-similar groups: first, it is not clear for what subclass of groups one can expect such a slow decay, secondly, even in the case of bounded automata groups, it is challenging to prove lower estimates for return probabilities. Indeed, to the best of our knowledge, for all contracting groups whose amenability has been proven so far and those covered by Theorem \ref{t-intro-Liouville}, the best known lower bound for $\mu^{(2n)}(id)$ comes from entropy, that is, $\mu^{(2n)}(id)\ge \exp(-H(X_{2n}))$. It is an open problem to find methods to establish sharper lower estimates for return probabilities, even in examples such as the Basilica group, and a tightly related problem is to find contracting groups that are amenable but not Liouville for symmetric simple random walks (see \S~\ref{s-non-Liouville} for an example of non-Liouville contracting group).

\subsection*{Organisation of the article} 
Section 2  is devoted to the theory of self-similar contracting groups and their limit spaces. After recalling various preliminaries from this theory, we prove Theorem \ref{th:traverses}, which is at the core of the first ingredient described above.

In Section 3 we describe the quasi-conformal structure of the limit space of a contracting self-similar group and introduce its Ahlfors-regular conformal dimension. The limit space can be seen as the boundary of a natural Gromov-hyperbolic graph (the \emph{self-similarity complex} of the group), and therefore can be endowed with the associated {visual metrics}.  Similarly to the case of boundaries of hyperbolic groups, its Ahlfors-regular conformal dimension is defined as the infimum of the Hausdorff dimensions of Ahlfors-regular metrics quasisymmetric to a visual metric.

Section 4 discusses the \emph{critical exponent} $p_c(G, \bim)$ for contracting self-similar groups as mentioned above.   We prove that the critical exponent $p_c(G, \bim)$ of a contracting group is not larger than the conformal dimension $\cdim(G, \bim)$, see Theorem \ref{th:critexp}.
Lower estimates for $p_c(G, \bim)$, and hence also for $\cdim(G, \bim)$, can be obtained from exhibiting a carefully chosen sequence of group elements. In Theorem \ref{th:thurstonmap}, we show a relation between $\ell^p$-contraction and the spectral radius of the linear operator $T_p$, where $T_p$ is the Thurston linear transform adapted to $\ell^p$ calculations.  

Section 5 is devoted to random walk, and contains the proof of the main result on the Liouville property. It starts with a brief review of preliminaries on random walk and electric network theory, bounded harmonic functions, and the entropy criterion. We then show a direct bound on $\ell^q$-capacity on the Schreier graphs of the action of a contracting group $(G, \bim)$ on the levels of the tree, which is valid for $q>\cdim(G, \bim)$  and for all symmetric measures which admit a finite moment of order $p>p_c(G, \bim)$, see Proposition \ref{pr:energyestimateforg}, from which the estimates on capacity stated in Theorem \ref{th:energies} follow. We then give a proof of the Liouville property for the finite support case based on traverse counting and $\ell^2$-capacity estimates. For the general case under finite $p$-moment, $p>p_c(G, \bim)$, we first apply a geometric argument similar to the proof of Theorem \ref{th:traverses}, then random walk entropy is bounded by contributions from traverses and long jumps separately. The former is controlled by $\ell^2$-capacity estimates in the same way as in the finite supported case; and the latter is controlled by $\ell^p$-contraction. 

Section 6 contains several examples of applications of the main theorem. The first class of examples are groups generated by automata of \emph{polynomial activity growth}, as defined by S.~Sidki in~\cite{sid:cycl}. It was shown in~\cite{nekpilgrimthurston} that limit spaces of all contracting self-similar groups in this class have Ahlfors-regular conformal dimension 1.  Amenability of such groups also follows (as explained in~\cite{nekpilgrimthurston}) from the theory of extensively amenable actions, developed in~\cite{YNS,JMMS:extensive} (the  Liouville property for all groups in this class seems to be new even for finitely supported symmetric measures).

The second class of examples is iterated monodromy groups of sub-hyperbolic complex rational functions. If $f$ is a sub-hyperbolic rational function, then the limit space of its iterated monodromy group $\img{f}$ is canonically homeomorphic to the Julia set of $f$. Moreover, the spherical metric restricted to the Julia set, seen as a metric on the limit space of the group, is quasi-conformal and Ahlfors-regular. In particular, the Hausdorff dimension of the Julia set is an upper bound of the conformal dimension of the limit space of the iterated monodromy group. 
It follows that if the Julia set of the function is not the whole sphere, then the iterated monodromy group is Liouville (for every symmetric measure with finite second moment) and thus amenable.

We provide an explicit example of the iterated monodromy group of a rational function whose Julia set is homeomorphic to the Sierpi\'nski carpet (and thus can not be generated by an automaton of polynomial activity growth).

We also describe explicitly a contracting self-similar group whose  limit space coincides with the classical Sierpi\'nski carpet (i.e., quasi-symmetric to it).

Finally, we give an example of a finitely generated self-similar contracting group, which is not Liouville for any finitely supported non-degenerate measure.

\subsection{Acknowledgements}
This work was initiated during the trimester {\it Groups acting on fractals} at IHP from April to June 2022. The authors thank the organizers of the program and the Institut for the support and hospitality. We thank two anonymous referees for their very careful reading and helpful comments.

\section{Self-similar groups and their limit spaces}
\subsection{Basic definitions}

Let $\alb$ be a finite alphabet. The set of finite words $\alb^\ast$ over $\alb$ is naturally identified with the vertex set of a rooted tree, with root the empty word $\varnothing$, where each $w\in \alb^\ast$ is connected by an edge to $wx$ for every $x\in \alb$. We say that $w\in \alb^*$ is \emph{below} $v\in \alb^*$ (or a \emph{descendant} of $v$) if $w=vu$ for some $u\in \alb^\ast$.
\begin{defn}
An action of a group $G$ on $\xs$ is said to be \emph{self-similar} if for every $g\in G$ and $v\in\xs$, there exists an element $g|_v\in G$ such that
\[g(vw)=g(v)g|_v(w)\]
for all $w\in\xs$.
\end{defn}

 Note that a self-similar action is in particular an action by automorphisms on the rooted tree $\alb^\ast$. 

Consider a faithful self-similar action of $G$ on $\xs$. The \emph{biset of the action} is the set $\bim$ of transformations of $\xs$ of the form 
\[x\cdot g\colon v\mapsto xg(v)\]
for $x\in\alb$ and $g\in G$. Since the action is faithful, the set $\bim$ is in a natural bijection with the set $\alb\times G$.

The set $\bim$ is closed under pre- and post-compositions with elements of $G$, and hence it is a \emph{biset}, i.e., a set with commuting left and right actions of $G$. They are given by the rules
\[(x\cdot g)\cdot h=x\cdot gh,\qquad h\cdot(x\cdot g)=h(x)\cdot h|_xg.\]

The right action of $G$ on the biset $\bim$ is free and has $|\alb|$ orbits, where $\alb\cdot\{1\}\subset\bim$ (naturally identified with $\alb$) intersects each orbit exactly once.

In general, a \emph{self-similar group} $(G, \bim)$ is a group $G$ and a biset $\bim$ such that the right action is free and has finitely many orbits. A subset $\alb\subset\bim$ is called a \emph{basis} of the biset if it intersects each orbit exactly once.

If $\bim_1, \bim_2$ are two $G$-bisets, then the biset $\bim_1\otimes\bim_2$ is the quotient of $\bim_1\times\bim_2$ by the identification $(x\cdot g, y)\sim (x, g\cdot y)$ and with the actions on the quotient induced by the actions $g\cdot (x, y)\cdot h=(g\cdot x, y\cdot h)$. We denote the image of the element $(x, y)\in\bim_1\times\bim_2$ in $\bim_1\otimes\bim_2$ by  $x\otimes y$ or just by $xy$.

This operation is associative up to isomorphisms of bisets. In particular, for every biset $\bim$ we get bisets $\bim^{\otimes n}$ for all $n\ge 0$. Here $\bim^{\otimes 0}$ is defined as $G$ with the natural left and right $G$-actions.

If $\alb\subset\bim$ is a basis of a biset, then every element of $\bim$ is written in a unique way as $x\cdot g$ for $x\in\alb$ and $g\in G$. For every $g\in G$ and every $x\in\alb$ there exist unique $y\in\alb$ and $h\in G$ such that
\[g\cdot x=y\cdot h.\]
It is easy to see that the map $x\mapsto y$ is a permutation of $\alb$ (since the left $G$-action permutes the right $G$-orbits). We get the \emph{associated action} of $G$ on $\alb$, and denote $y=g(x)$, $h=g|_x$. The map $g\mapsto (\sigma_g, (g|_x)_{x\in\alb})\in\mathop{Symm}(\alb)\ltimes G^{\alb}$, where $\sigma_g\in\mathop{Symm}(\alb)$ is the image of $g$ under the associated action on $\alb$, is called the \emph{wreath recursion}. It is a homomorphism from $G$ to the wreath product $\mathop{Symm}(\alb)\ltimes G^{\alb}$.

Changing the basis $\alb$ corresponds to composing the wreath recursion with an inner automorphism of the wreath product.

If $\alb$ is a basis of $\bim$, then $\alb^{\otimes n}$ is a basis of $\bim^{\otimes n}$. We identify the set $\alb^{\otimes n}$ with $\alb^n$ in the natural way. Every element of $\bim^{\otimes n}$ is written in a unique way as $v\cdot g$ for $v\in\alb^n$ and $g\in G$.

In particular, for every $v\in\alb^n\subset\bim^{\otimes n}$ and for every $g\in G$ there exist unique $u\in\alb^n$ and $g|_v\in G$ such that 
\[g\cdot v=u\cdot g|_v.\]
The map $v\mapsto u$ is a permutation of $\alb^n$ (conjugate to the permutation induced by $g$ on the set of right orbits of $\bim^{\otimes n}$), so we denote $u=g(v)$.

We get hence the \emph{associated action} of $G$ on $\xs$. 
The associated action on $\xs$ does not depend, up to conjugacy, on the choice of the basis $\alb$ (since it is conjugate to the left action of $G$ on $\bigcup_{n\ge 0}\bim^{\otimes n}/G$). The \emph{faithful quotient} of a self-similar group $(G, \bim)$ is the quotient of $G$ by the kernel of the associated action on $\xs$.

Note that if $(G, \bim)$ is a self-similar group and $m\ge 1$, then $(G, \bim^{\otimes m})$ is a self-similar group with the same faithful quotient. Indeed the associated action of $G$ on the tree $(\alb^m)^*$ identifies with the restriction of the original action of $G$ on the levels of $\alb^\ast$ that are multiples of $m$. For many arguments, there is no loss of generality in replacing $(G,\bim)$ by some iterate $(G, \bim^{\otimes m})$, as we shall often do.

\subsection{Contracting groups and limit space}

Let $(G, \bim)$ be a self-similar group. Let $\alb\subset\bim$ be a basis. The self-similar group is said to be \emph{contracting} if there exists a finite set $\nuke\subset G$ such that for every $g\in G$ there exists $n$ such that 
\[g|_v\in\nuke\]
for all $v\in\alb^m$ for all $m\ge n$. The smallest set $\nuke$ satisfying this condition is called the \emph{nucleus} of the group. It is shown in~\cite[Corollary~2.11.7]{nek:book} that the property to be contracting does not depend on the choice of the basis $\alb$. The nucleus, however, depends on $\alb$.

\begin{defn}
Let $(G, \bim)$ be a contracting group. Choose a basis $\alb\subset\bim$, and let $\xmo$ be the space of left-infinite sequences $\ldots x_2x_1$ with the direct product topology (where $\alb$ is discrete). The \emph{limit space} $\lims$ is the quotient of $\xmo$ by the \emph{asymptotic equivalence relation}, identifying two sequences $\ldots x_2x_1, \ldots y_2y_1$ if and only if there exists a sequence $g_n$ of elements of the nucleus $\nuke$ such that $g_n(x_n\ldots x_2x_1)=y_n\ldots y_2y_1$.
\end{defn}

It is shown in~\cite{nek:book} that the asymptotic equivalence relation is transitive, and that the limit space is a metrizable compact space of finite topological dimension. 

It follows from the definition that the asymptotic equivalence relation is invariant under the shift $\ldots x_2x_1\mapsto \ldots x_3x_2$. Consequently, the shift induces a continuous map $\si\colon \lims\arr\lims$, which we call the \emph{limit dynamical system}. It is shown in~\cite{nek:book} that the limit dynamical system does not depend, up to topological conjugacy, on the choice of the basis $\alb$.

\begin{defn}
Let $(G, \bim)$ and $\alb$ be as above. The \emph{limit $G$-space} $\limg$ is the quotient of the direct product $\xmo\times G$ (where $G$ is discrete) by the equivalence relation identifying $\ldots x_2x_1\cdot g$ and $\ldots y_2y_1\cdot h$ if and only if there exists a sequence $g_n$ of elements of the nucleus such that
\[g_n\cdot x_n\ldots x_2x_1\cdot g=y_n\ldots y_2y_1\cdot h\] in $\bim^{\otimes n}$ for all $n$.
\end{defn}

The equality in the definition of $\limg$ is equivalent to the equalities
\[g_n(x_n\ldots x_2x_1)=y_n\ldots y_2y_1,\qquad g_n|_{x_n\ldots x_2x_1}g=h.\]

We have a natural right action of $G$ on $\limg$ induced by the action $(\ldots x_2x_1\cdot g)\cdot h=\ldots x_2x_1\cdot gh$ on $\xmo\times G$. One can show that the space $\limg$ and the action do not depend (up to topological conjugacy) on the choice of the basis $\alb$.

\begin{defn}
We denote by $\Tcal$ the image of $(\xmo, 1_G)\subset\xmo\times G$ in $\limg=\xmo\times G/\sim$. 
\end{defn}

The action of $G$ on $\limg$ is proper and co-compact with the quotient $\limg/G$ naturally homeomorphic to the limit space $\lims$. The set $\Tcal$ is compact and intersects every $G$-orbit. 



\begin{defn}
For $v\in\xs$, we denote by $\Tcal_v$ the image of the set $\xmo v$ of sequences ending with $v$ in the limit space $\lims$. We call it a \emph{tile of $n$th level} if $n=|v|$. In particular, $\lims$ is the tile of $0$th level.
\end{defn}

For every point $\xi\in\lims$ and every $n\ge 0$, the union of the tiles of the $n$-th level containing $\xi$ is a closed neighborhood of $\xi$. The following is \cite[Proposition 3.5.8]{nek:book}.

\begin{prop}
\label{pr:intersectingtiles}
Two tiles $\Tcal_v, \Tcal_u$ of the same level intersect if and only if there exists an element $g$ of the nucleus such that $g(v)=u$.
\end{prop}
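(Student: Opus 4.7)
My plan is to prove both implications by unwinding the definition of asymptotic equivalence, using two standard closure properties of the nucleus. Write $k = |v| = |u|$.

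\emph{Forward implication.} Suppose $\Tcal_v \cap \Tcal_u$ contains a point $\xi$, and choose representatives $\ldots x_2 x_1 v$ and $\ldots y_2 y_1 u$ of $\xi$ in $\xmo$. By the definition of asymptotic equivalence there exist $g_n \in \nuke$ such that $g_n$ sends the length-$n$ suffix of the first sequence to that of the second. Taking $n = k$, those suffixes are precisely $v$ and $u$, so $g := g_k \in \nuke$ satisfies $g(v) = u$.

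\emph{Backward implication.} The argument will rest on two properties of the nucleus. Property (a): $\nuke$ is closed under sections, i.e.\ $g \in \nuke$ and $w \in \xs$ imply $g|_w \in \nuke$. Property (b): every element of $\nuke$ arises as a one-step section of another nucleus element, i.e.\ for each $g \in \nuke$ there exist $h \in \nuke$ and $x \in \alb$ with $h|_x = g$. Property (a) is standard and follows from the minimality of $\nuke$. For (b) one uses the characterization that $g \in \nuke$ precisely when $g = h'|_w$ for some $h' \in G$ and some word $w \in \xs$ of arbitrarily large length; writing $w = w' x$ with $x \in \alb$ and invoking the contracting property for $h'$, the section $h'|_{w'}$ lies in $\nuke$ once $|w'|$ is sufficiently large, producing the required $h$.

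Given $g_0 \in \nuke$ with $g_0(v) = u$, write $g_0 \cdot v = u \cdot h_0$ in $\bim^{\otimes k}$, with $h_0 = g_0|_v \in \nuke$ by (a). Iteratively applying (b), I construct $g_n \in \nuke$ and $x_n \in \alb$ with $g_n|_{x_n} = g_{n-1}$, and set $y_n := g_n(x_n)$. An induction in $\bim^{\otimes(k+n)}$ based on the identity $g_n \cdot x_n = y_n \cdot g_{n-1}$ yields
\[
g_n \cdot (x_n x_{n-1} \cdots x_1 v) = (y_n y_{n-1} \cdots y_1 u) \cdot h_0,
\]
so in particular $g_n(x_n \ldots x_1 v) = y_n \ldots y_1 u$. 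For the remaining levels $n < k$, property (a) lets me take the required nucleus element to be $g_0|_{v_k \cdots v_{n+1}}$, which sends the length-$n$ suffix of $v$ to that of $u$. This verifies the asymptotic equivalence of $\ldots x_2 x_1 v$ and $\ldots y_2 y_1 u$, whose common image in $\lims$ lies in $\Tcal_v \cap \Tcal_u$. The only non-routine ingredient is property (b); the rest of the argument is bookkeeping with the biset structure.
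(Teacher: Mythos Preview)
Your proof is correct. The paper itself does not supply a proof of this proposition; it is quoted as a known fact (it appears in Nekrashevych's book~\cite{nek:book}, where the same argument via the closure properties of the nucleus is given). Your approach is the standard one.

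One small remark on your justification of property~(b). The characterization you invoke --- that $g\in\nuke$ precisely when $g=h'|_w$ for some fixed $h'\in G$ and words $w$ of arbitrarily large length --- is correct, but the direction you need (``only if'') deserves a word of justification, since you are using it to \emph{derive}~(b). The clean way is directly from minimality: if some $g\in\nuke$ were not of the form $h|_x$ with $h\in\nuke$, then for every $f\in G$ and every sufficiently long $v=v'x$ one has $f|_{v'}\in\nuke$ and hence $f|_v=(f|_{v'})|_x\ne g$, so $\nuke\setminus\{g\}$ would still absorb all long sections, contradicting minimality of $\nuke$. This yields~(b) immediately and avoids any appearance of circularity. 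With that adjustment your argument is complete.
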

\subsection{Contracting models}

Let $(G, \bim)$ be a contracting self-similar group, and let $\X$ be a proper metric space on which $G$ acts properly and co-compactly by isometries from the right.

Then $\X\otimes\bim$ is defined as the quotient of $\X\times\bim$ (where $\bim$ is discrete) by the action $(\xi, x)\mapsto (\xi\cdot g^{-1}, g\cdot x)$. We denote the orbit of $(\xi, x)$ by $\xi\otimes x$. We have then $\xi\otimes g\cdot x=\xi\cdot g\otimes x$ for all $\xi\in\X, x\in\bim$, and $g\in G$.

Let $\alb$ be a basis of $\bim$. Assume that $I\colon \X\otimes\bim\arr\X$ is a $G$-equivariant map. Then $I$ is  uniquely determined by the maps $I_x(\xi)=I(\xi\otimes x)$, which satisfy the following condition:
\begin{equation}
    \label{eq:Iequivariance}
    I_x(\xi\cdot g)=I_{g(x)}(\xi)\cdot g|_x,
\end{equation}
for all $x\in\alb$, $g\in G$, and $\xi\in\X$, where $g(x)\in\alb$ and $g|_x\in G$, as usual, are given by $g\cdot x=g(x)\cdot g|_x$.

Conversely, every collection of maps $I_x\colon \X\arr\X$ satisfying~\eqref{eq:Iequivariance} defines an equivariant map $I\colon \X\otimes\bim\arr\X$ by the formula $I(\xi\otimes x\cdot g)=I_x(\xi)\cdot g$.

Every equivariant map $I\colon \X\otimes\bim\arr\X$ induces an equivariant map $I_n\colon \X\otimes\bim^{\otimes (n+1)}\arr\X\otimes\bim^{\otimes n}$ by the rule
\[I_n(\xi\otimes x\otimes v)=I(\xi\otimes x)\otimes v\]
for $\xi\in\X$, $x\in\bim$, $v\in\bim^{\otimes n}$.
According to this, we will sometimes denote $I$ by $I_0$.

We also denote by $I^n\colon \xi\otimes\bim^{\otimes n}\arr\X$ the composition $I_{n-1}\circ I_{n-2}\circ\cdots \circ I_1\circ I$.

\begin{defn}
\label{def:contractingGmap}
A $G$-equivariant map $I\colon \X\otimes\bim\arr\X$ is said to be \emph{contracting} if there exist $n\ge 1$ and $\lambda\in (0, 1)$ such that $d(I^n(\xi_1\otimes v), I^n(\xi_2\otimes v))\le\lambda d(\xi_1, \xi_2)$ for all $\xi_1, \xi_2\in\X$ and $v\in\bim^{\otimes n}$.
\end{defn}

A $G$-equivariant map $I\colon \X\otimes\bim\arr\X$ is contracting if and only if there exist $C>0$ and $\lambda\in (0, 1)$ such that
\[ d(I^n(\xi_1\otimes v), I^n(\xi_2\otimes v))\le C\lambda^n d(\xi_1, \xi_2) \]
for all $n$, $v\in\bim^{\otimes n}$, $\xi_1, \xi_2\in\X$.

The natural map $\limg\otimes\bim\arr\limg$ induced by the map $(\ldots x_2x_1\cdot g, x\cdot h)\mapsto \ldots x_2x_1g(x)\cdot g|_xh$ from $\xmo\times G\times \bim$ to $\xmo\times G$ is a well-defined $G$-equivariant \emph{homeomorphism}.  We will denote the image of $\xi\otimes x\in\limg\otimes\bim$ just by $\xi\otimes x$, thus identifying $\limg\otimes\bim$ with $\limg$ (this slight abuse of notation is justified by the fact that $x_1\cdot g\otimes x\cdot h\mapsto x_1\otimes g(x)\cdot g|_xh$ is the identical map on $\bim^{\otimes 2}$).

It is proved in~\cite[Theorem~2.10]{nek:dimension}  that the natural homeomorphism $\limg\otimes\bim\arr\limg$ is contracting in the sense of Definition~\ref{def:contractingGmap} (with respect to a natural distance on $\limg$) provided $G$ is finitely generated.

The following theorem is proved in~\cite{nek:models}.

\begin{thm}
\label{th:contrmodels}
Let $I\colon \X\otimes\bim\arr\X$ be a contracting $G$-equivariant map. Then the inverse limit of the sequence of $G$-spaces $\X\otimes\bim^{\otimes n}$ and maps $I_n$ is $G$-equivariantly homeomorphic to the limit $G$-space $\limg$. The map on the inverse limit induced by the maps $I_n$ is conjugated by the homeomorphism with the natural homeomorphism $\limg\otimes\bim\arr\limg$.
\end{thm}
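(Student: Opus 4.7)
The plan is to construct a $G$-equivariant continuous map $\Phi: \limg \arr \varprojlim_n (\X\otimes\bim^{\otimes n}, I_n)$ and prove it is a homeomorphism intertwining the two tensor structures. Fix a basis $\alb\subset\bim$ and work with the description of $\limg$ as equivalence classes of $\omega\cdot g$ with $\omega=\ldots x_2x_1\in\xmo$ and $g\in G$. For any basepoint $\xi_0\in\X$, the contracting property of $I$ implies that
\[\xi_n^{(m)}(\omega) := I_{x_{n+1}}\circ I_{x_{n+2}}\circ\cdots\circ I_{x_m}(\xi_0)\]
is Cauchy in $\X$ as $m\to\infty$, with successive differences decaying at rate $C\lambda^{m-n}$; let $\xi_n(\omega)\in\X$ denote its limit, which is independent of $\xi_0$. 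Define
\[\Phi([\omega\cdot g]) := \bigl(\xi_n(\omega)\otimes x_nx_{n-1}\cdots x_1\cdot g\bigr)_{n\ge 0}.\]
The recursion $\xi_n(\omega)=I_{x_{n+1}}(\xi_{n+1}(\omega))$ directly gives $I_n(\Phi_{n+1})=\Phi_n$, so the sequence lies in the inverse limit.

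The main technical step is the well-definedness of $\Phi$ on asymptotic equivalence classes. Suppose $\omega\cdot g\sim \omega'\cdot h$ via a sequence $g_n\in\nuke$ satisfying $g_n\cdot(x_n\cdots x_1)\cdot g=(y_n\cdots y_1)\cdot h$ in $\bim^{\otimes n}$. Using the identity $\xi\cdot g_n\otimes w=\xi\otimes g_n\cdot w$, showing that the $n$th coordinates of $\Phi([\omega\cdot g])$ and $\Phi([\omega'\cdot h])$ coincide reduces to verifying that $\xi_n(\omega)=\xi_n(\omega')\cdot g_n^{-1}$. Iterating the equivariance identity~\eqref{eq:Iequivariance} transforms $\xi_n^{(m)}(\omega')$ into $\xi_n^{(m)}(\omega)$ twisted on the right by an element of $G$ determined by $g_m$; since the nucleus is finite, this twist runs through a bounded subset of $G$, and combining the contracting estimate with the fact that $G$ acts on $\X$ by isometries yields the required identification in the limit. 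Equivariance under the right $G$-action is transparent from the construction, and continuity of $\Phi$ is a direct consequence of the exponential Cauchy estimate.

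For bijectivity, given a compatible sequence $(y_n)$ in the inverse limit, one writes each $y_n$, using the basis $\alb^n\subset\bim^{\otimes n}$, in the form $\xi_n\otimes v_n\cdot h_n$; the compatibility $I_n(y_{n+1})=y_n$ forces $v_n$ to be the length-$n$ suffix of a single left-infinite word $\omega$ and the $h_n$ to stabilize to a single element $g\in G$, yielding the preimage $[\omega\cdot g]$. Injectivity follows from the same rigidity as for well-definedness: two representatives giving equal inverse-limit sequences must be related by elements that project compatibly at every level to a bounded set, and a standard pigeonhole argument forces these elements to eventually lie in the nucleus, exhibiting asymptotic equivalence. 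The upgrade from continuous bijection to homeomorphism uses that the $G$-action on $\limg$ is proper with compact quotient $\lims$ and that $\Tcal\subset\limg$ is compact: it suffices to show $\Phi$ is a homeomorphism on $\Tcal$, after which $G$-equivariance extends the result. Finally, the conjugation of $\limg\otimes\bim\arr\limg$ with the self-map of the inverse limit induced by the $I_n$ is immediate, as appending $x\in\bim$ to $\omega\cdot g$ on the right corresponds to a shift of the inverse-limit sequence. The hard part will be the well-definedness step, where one must rigidly couple the isometric action on $\X$ with the nucleus-driven combinatorial equivalence to cancel bounded right translations in the limit.
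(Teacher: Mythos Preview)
The paper does not give its own proof of this theorem; it is quoted from~\cite{nek:models} and used as a black box. So there is no in-paper argument to compare your proposal against.

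Your outline is the natural construction and is broadly correct, but two steps are under-justified.

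First, a minor point: for the limits $\xi_n(\omega)$ to exist you need completeness of $\X$, which is not assumed. The standard fix is to take $\xi_0$ inside a compact set $T\subset\X$ with $I_x(T)\subset T$ for all $x\in\alb$ (such a $T$ exists by co-compactness and contraction, as noted after Lemma~\ref{lem:lengthofsections}); then every $\xi_n^{(m)}(\omega)$ lies in $T$ and the Cauchy sequence converges by compactness.

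Second, and more seriously, your surjectivity argument is incomplete. Writing $y_n=\xi_n\otimes v_n\cdot h_n$ does not by itself force the $h_n$ to stabilise: if you track the relation $I_n(y_{n+1})=y_n$ you find $h_n=(k_n|_{v_n'})\,h_{n+1}$ for some $k_n$ in a fixed finite set, so a priori $h_n$ could drift by an unbounded product of nucleus elements. What actually works is a K\H{o}nig-lemma/compactness step: with $T$ as above and $T_n:=T\otimes\alb^n$, the sets $G_n:=\{g\in G: y_n\cdot g^{-1}\in T_n\}$ are finite (properness), nonempty (co-compactness), and nested (since $I_n(T_{n+1})\subset T_n$), so $\bigcap_n G_n\ne\emptyset$; any $g$ in this intersection reduces to the case $(y_n)\in\varprojlim T_n$, where another finite inverse-system argument produces compatible $(v_n,\eta_n)$ and hence the preimage $[\omega\cdot g]$. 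Your injectivity sketch via ``bounded elements eventually land in the nucleus'' is fine once you make this explicit using contraction of sections.
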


\begin{defn}
Let $(G, \bim)$ be a contracting self-similar group. A \emph{contracting model} of $(G, \bim)$ is a proper co-compact action of $G$ on a metric space $\X$ by isometries together with a contracting equivariant map $I\colon \X\otimes\bim\arr\X$.
\end{defn}

It is proved in~\cite{nek:models} (see also~\cite[Theorem~3.1]{nek:dimension}) that for every finitely generated contracting group $(G, \bim)$ there exists $n_0$ such that $(G, \bim^{\otimes n_0})$ has a contracting model $I\colon \X\otimes\bim^{\otimes n_0}\arr\X$ where $\X$ is a connected simplicial complex and the maps $I_x$ are piecewise affine.

 We shall say that a metric space $(\X, d)$ is \emph{almost geodesic} if there exists $C>0$ such that for every $\xi, \eta\in \X$, there exists a sequence of points $x_0=\xi, x_1, \ldots, x_n=\eta$ such that $d(x_i, x_{i+1})\leq C$ and 
\[\lvert d(\xi, \eta)-\sum_{i=0}^{n-1} d(x_i, x_{i+1})\rvert \leq C.\]
It is easy to see (and it follows, e.g., from Theorem 4.5.C and Corollary 4.B.11 in \cite{Cor-Har-LC}) that a proper almost geodesic space $\X$ satisfies the Schwarz--Milnor lemma\footnote{In the most common formulation of the Schwarz--Milnor lemma, $\X$ is supposed to be a length space in the usual sense. Note also that an almost geodesic space is in particular \emph{large-scale geodesic} in the sense of \cite[Definition 3.B.1]{Cor-Har-LC}, a condition that already suffices for the Schwarz--Milnor lemma.} : if a group $G$ acts properly, isometrically and co-compactly on $\X$ from the right, then $G$ is finitely generated and for any word metric on $G$ and every $x\in \X$, the orbital map $g\mapsto x\cdot g$ is a quasi-isometry from $G$ to $\X$. 

We will need the following simple lemma.
\begin{lem}
\label{lem:lengthofsections}
Suppose that $G$ is finitely generated, and let $l(g)$ denote the word length of an element $g\in G$ with respect to a fixed finite generating set of $G$. Choose a basis $\alb\subset\bim$. Let $\X$ be a proper metric space on which $G$ acts by isometries from the right so that the action is co-compact and proper, and  let $I\colon \X\otimes\bim\arr\X$ be a $G$-equivariant map. 
\begin{enumerate}[label=(\roman*)]
\item \label{i-attractor} If $I$ is contracting, then for every compact subset $\mathcal{T}_0\subset \X$, there exists $\mathcal{T}\supset \mathcal{T}_0$ compact such that $I^{n}(\xi\otimes v)\in \mathcal{T}$ for every $n\ge 1, \xi\in \mathcal{T}$ and $v\in \alb^n$.

\item \label{i-SM} Suppose that $\X$ is an almost geodesic space, and let $\mathcal{T}$ be any compact set satisfying the conclusion of item \ref{i-attractor}. Then there exists a constant $C>0$ such that 
\[l(g|_v)\le Cd(I^n(\xi\otimes v), I^n(\xi\cdot g\otimes v))+C\]
for every $n\ge 1$, $g\in G$, $\xi\in\Tcal$, and $v\in\alb^n$.

\end{enumerate}


\end{lem}


\begin{proof}
 \ref{i-attractor} We take $\mathcal{T}$ to be the closure of the set of points of the form $I^{|v|}(\xi\otimes v)$ for $\xi\in \mathcal{T}_0$  and $v\in \xs$ (compare with the proof of \cite[Theorem 5.9]{nek:models}). Let us check that it has finite diameter (and hence it is compact). Fix $m\ge 1$ and $\lambda\in (0, 1)$ such that $d(I^m(\xi_1\otimes v), I^m(\xi_2 \otimes v))\leq \lambda d(\xi_1, \xi_2)$ for all $v\in \bim^{\otimes m}$.  For $k\ge 1$, let $\mathcal{T}_k$ be the set of points of the form $I^{|v|}(\xi\otimes v)$ for $\xi\in \mathcal{T}_0$ and $\lvert v\rvert\leq mk$, and note that $\mathcal{T}_k\subset \mathcal{T}_{k+1}$ and that $\mathcal{T}$ is the closure of $\bigcup_k \mathcal{T}_k$. Pick a basepoint $\xi_0\in \mathcal{T}_0$, and let $C>0$ be an upper bound of the diameter of $\mathcal{T}_1$. For $k\ge 1$,  if $\xi\in \mathcal{T}_{k+1}\setminus \mathcal{T}_k$, then $\xi=I^m(\xi' \otimes v)$ for some $\xi'\in \mathcal{T}_k, v\in \alb^m$, and  we have
 \[d(\xi, \xi_0)\leq d(\xi_0, I^m(\xi_0\otimes v))+ d(I^m(\xi' \otimes v), I^m(\xi_0\otimes v))\leq C+\lambda d(\xi', \xi_0),\]
where we have used that $\xi_0, I^m(\xi_0\otimes v)\in \mathcal{T}_1$. From this, it immediately follows by induction that $\mathcal{T}_k$ is contained in the ball centered at $\xi_0$ of radius $C(1+\lambda+\cdots +\lambda^{k-1})$, and hence $\mathcal{T}$ has diameter bounded by $2C/(1-\lambda)$. 

\ref{i-SM} Since $\X$ is an almost geodesic space, the Schwarz--Milnor lemma and compactness of $\mathcal{T}$ imply that there exists $C>0$ such that $l(g)\le Cd(\xi_1, \xi_2\cdot g)+C$ for all $\xi_1, \xi_2\in\Tcal$. 

It follows that $l(g|_v)\le Cd(I^n(\xi\otimes v), I^n(\xi\otimes g(v))\cdot g|_v)+C=Cd(I^n(\xi\otimes v), I^n(\xi\cdot g\otimes v)+C$.
\end{proof}

\subsection{Iterated monodromy groups}
\label{ss:img}

\begin{defn}
A \emph{virtual endomorphism} of a topological space $\mathcal{J}$ is a finite degree covering map $f\colon \mathcal{J}_1\arr\mathcal{J}$ together with a continuous map $\phi\colon \mathcal{J}_1\arr\mathcal{J}$.
\end{defn}

Let $f, \phi\colon \mathcal{J}_1\arr\mathcal{J}$ be a virtual endomorphism. Suppose that $\mathcal{J}$ is path connected. Consider the fundamental group $G=\pi_1(\mathcal{J}, t)$. The \emph{associated biset} is the set $\bim$ of pairs $(z, [\delta])$, where $z\in f^{-1}(t)$, and $[\delta]$ is the homotopy class of a path $\delta$ in $\mathcal{J}$ connecting $t$ to $\phi(z)$. The action of $G$ on $\bim$ is given by the formulas
\[(z, [\delta])\cdot[\gamma]=(z, [\delta\gamma]),\qquad [\gamma](z, [\delta])=(y, [\phi(\gamma_z)\delta]),\]
where $\gamma$ is a loop based at $t$, $\gamma_z$ is the lift of $\gamma$ by $f$ starting in $z$, $y$ is the end of $\gamma_z$. We multiply paths as functions: in a product $\alpha\beta$ the path $\beta$ is passed before $\alpha$.

The \emph{iterated monodromy group} of the virtual endomorphism is the faithful quotient of the self-similar group $(G, \bim)$.

By the definition of the associated biset, a basis of $\bim$ is a collection \[\{(z_1, [\ell_1]), (z_2, [\ell_2]), \ldots, (z_n, [\ell_n]\},\] where $\{z_1, z_2, \ldots, z_n\}=f^{-1}(t)$, and $\ell_i$ is a path from $t$ to $\phi(z_i)$. Let us denote $x_i=(z_i, [\ell_i])$. Then, as shown in \cite[Theorem 4.5]{nek:models}, the action of the iterated monodromy group on the tree $\xs$ for $\alb=\{x_1, x_2, \ldots, x_n\}$ is given by the recurrent formula
\begin{equation}
    \label{eq:imgr}
[\gamma](x_iw)=x_j[\ell_j^{-1}\phi(\gamma_i)\ell_i](w),
\end{equation}
where $\gamma_i$ is the unique lift of $\gamma$ by $f$ starting in $z_i$, and $j$ is such that $z_j$ is the end of $\gamma_i$.

A particular case of this situation is a \emph{partial self-covering}, i.e., a finite degree covering map $f\colon \mathcal{J}_1\arr\mathcal{J}$ where $\mathcal{J}_1\subset\mathcal{J}$. It is a virtual endomorphism with the map  $\phi\colon \mathcal{J}_1\arr\mathcal{J}$ equal to the identical embedding. Then the \emph{iterated monodromy group} of $f$ is the iterated monodromy group of the virtual endomorphism $f, \phi\colon \mathcal{J}_1\arr\mathcal{J}$. In this case, it is natural to omit $\phi$ (as it is the identity map) in the formula~\eqref{eq:imgr}.

An important class of examples consists of \emph{sub-hyperbolic complex rational functions}. 

Let $f(z)\in\mathbb{C}(z)$ be a rational function seen as a self-map of the Riemannian sphere $\mathbb{PC}^1$. Denote by $P_f$ the closure of the union of the forward orbits of the \emph{critical values} of $f$ (i.e., values at the critical points of the map $f\colon \mathbb{PC}^1\arr\mathbb{PC}^1$). 

The function $f$ is said to be \emph{sub-hyperbolic} if it is expanding with respect to some orbifold metric on a neighborhood of its Julia set, see~\cite[\S 19]{milnor}. If such a metric can be chosen to be a usual Riemannian metric (i.e., with no non-trivial orbifold singularities), then $f$ is said to be \emph{hyperbolic}. By a theorem of A.~Douady and J.H.~Hubbard (see~\cite[Theorem~19.6]{milnor}) a rational function $f$ is sub-hyperbolic if and only if the orbit of every critical point is either finite or converges to an \emph{attracting cycle},  i.e., a periodic orbit $f\colon x_1\mapsto \cdots \mapsto x_d\mapsto x_1$ such that $\prod_{i} |f'(x_i)|<1$ (it is hyperbolic if and only if all critical points converge to an attracting cycle). In particular, $f$ is sub-hyperbolic provided $P_f$ is finite (such functions are called \emph{post-critically finite}). 
 A sub-hyperbolic function induces a partial self-covering
\[f\colon \mathbb{PC}^1\setminus f^{-1}(P_f)\to \mathbb{PC}^1\setminus P_f.\]
 (Note that $P_f$ and $f^{-1}(P_f)$ are countable closed sets with finitely many accumulation points, in particular their complements are connected manifolds). By definition, the iterated monodromy group  $\img{f}$ is the iterated monodromy group of this partial self-covering.

The following result is a  consequence of the orbifold version of Theorem~\ref{th:contrmodels} (see~\cite[Theorem~5.5.3]{nek:book} and \cite[Theorem~4.5.36]{nek:dyngroups}). 
\begin{thm}
\label{th:imgandlimspace}
Let $f$ be a sub-hyperbolic (e.g., post-critically finite) complex rational function. Then its iterated monodromy group $G=\img{f}$ is contracting and its limit dynamical system $\si\colon \lims\arr\lims$ is topologically conjugate to the restriction of $f$ to its Julia set.
\end{thm}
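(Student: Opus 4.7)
The plan is to apply the contracting-model framework of Theorem~\ref{th:contrmodels} both to deduce that $G=\img{f}$ is contracting and to identify its limit dynamical system with $f|_{\mathcal{J}(f)}$. The essential analytic input is that a post-critically finite rational map is uniformly expanding on a neighborhood of its Julia set with respect to an appropriate orbifold metric; I would use this to build a contracting $G$-equivariant map $I:\X\otimes\bim\arr\X$, and then read off the desired conjugacy from Theorem~\ref{th:contrmodels}.

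First I would equip $\mathbb{PC}^1\setminus P_f$ with the Thurston orbifold metric of $f$, whose weights at points of $P_f$ are dictated by the local degrees of the iterates. Restricted to a small neighborhood $U$ of $\mathcal{J}(f)$ contained in $\mathbb{PC}^1\setminus P_f$, this metric makes every inverse branch of $f$ a uniform contraction with factor $\lambda<1$. Let $\X$ be a connected cover of $U$ on which $G=\pi_1(\mathbb{PC}^1\setminus P_f,t)$ acts properly by deck transformations with compact quotient, and let $\alb=\{x_1,\ldots,x_d\}\subset\bim$ be the basis coming from $f^{-1}(t)$ with connecting paths $\ell_i$. Each $x_i$ lifts to a map $I_{x_i}:\X\arr\X$ that follows the inverse branch of $f$ selected by $(z_i,\ell_i)$; the equivariance \eqref{eq:Iequivariance} is a direct translation of the monodromy formula \eqref{eq:imgr}, and strict contraction of the inverse branches gives that the resulting $I:\X\otimes\bim\arr\X$ is contracting in the sense of Definition~\ref{def:contractingGmap}. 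Lemma~\ref{lem:lengthofsections} then bounds $l(g|_v)$ uniformly in $n$ and $v\in\alb^n$, forcing the sections to lie in a fixed finite subset of $G$; this is the contracting property.

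For the topological conjugacy, Theorem~\ref{th:contrmodels} identifies $\limg$ with the inverse limit of the system $(\X\otimes\bim^{\otimes n},I_n)$; passing to $G$-orbits identifies $\lims=\limg/G$ with the inverse limit of the quotient spaces, which after unwinding corresponds to nested intersections of tiles in $\mathbb{PC}^1$ obtained by repeatedly applying inverse branches. On the dynamical side, the basis $\alb$ partitions a neighborhood of $\mathcal{J}(f)$ into $d$ preimage pieces, and iteration assigns to each $z\in\mathcal{J}(f)$ an inverse-branch itinerary $\ldots x_{i_2}x_{i_1}\in\xmo$ such that $z$ is the unique intersection of the corresponding shrinking tiles. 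Two itineraries encode the same point exactly when they are asymptotically equivalent: the elements of the nucleus are precisely the bounded monodromies that can relate them (compare Proposition~\ref{pr:intersectingtiles}). The resulting bijection $\lims\arr\mathcal{J}(f)$ is a homeomorphism by the uniform contraction, and conjugates the shift $\si$, which erases $x_1$ and hence moves one inverse branch outward, to $f|_{\mathcal{J}(f)}$.

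The main obstacle is supplying the expanding orbifold metric near $\mathcal{J}(f)$: outside the hyperbolic case this requires genuine work, and the exceptional (parabolic-orbifold, Latt\`es-type) examples have to be handled with a flat rather than a negatively curved orbifold structure. Once an expanding metric is available, the contracting-models machinery converts the geometric expansion cleanly into both the contracting property for $(G,\bim)$ and the identification of $(\lims,\si)$ with $(\mathcal{J}(f),f)$.
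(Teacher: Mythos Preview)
Your proposal is correct and follows essentially the same approach the paper indicates: the paper states that Theorem~\ref{th:imgandlimspace} is a corollary of Theorem~\ref{th:contrmodels} together with the Schwarz--Pick theorem, which is precisely your argument --- Schwarz--Pick is what makes the inverse branches of $f$ contract the orbifold (hyperbolic or, in the Latt\`es cases, flat) metric, yielding a contracting model to which Theorem~\ref{th:contrmodels} applies and from which both the contracting property and the conjugacy with $f|_{\mathcal{J}(f)}$ are read off.
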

\begin{proof}[Proof sketch]
If the function $f$ is post-critically finite, this is exactly  \cite[Theorem 6.4.4]{nek:book}. We briefly explain how to apply \cite[Theorem~5.5.3]{nek:book} to a general sub-hyperbolic function, since that result assumes that the fundamental group of the base space is finitely generated, and this is not the case of $\mathbb{PC}^1\setminus P_f$ when $P_f$ is infinite. This issue can be bypassed as follows.
Fix finitely many small open discs  $D_1, \cdots, D_r\subset \mathbb{PC}^1$, with each $\overline{D}_i$ contained in the basin of attraction of some attracting cycle, whose union $D_1\cup \cdots \cup D_k$  contains all points in the attracting cycles and all post-critical points converging to them, and is forward $f$-invariant. One can find such disks $D_i$, for example, using the \emph{Fatou coordinate} uniformizing the Fatou components~\cite[\S 9]{milnor}, see for example the proof of~\cite[Theorem~6.6]{nek:cpalg}. Then the complement $\mathcal{M}$ of $D_1 \cup \ldots\cup  D_r$ is a compact neighbourhood of the Julia  set such that  $f^{-1}(\mathcal{M})\subset\mathcal{M}$. There is a natural structure of \emph{Thurston orbifold} and an orbifold metric on $\mathcal{M}$ with respect to which $f\colon f^{-1}(\mathcal{M})\arr\mathcal{M}$ is a uniformly expanding partial self-covering, see~\cite[\S 19]{milnor} and~\cite[\S~4.7.1]{nek:dyngroups}. Let us denote  by $\img{f, \mathcal{M}}$ its iterated monodromy group.  Theorem \cite[Theorem~5.5.3]{nek:book} (or alternatively \cite[Theorem 5.10]{nek:models}) can be applied to this new self-covering, and implies that the group $\img{f, \mathcal{M}}$ is contracting and its limit dynamical system is conjugate to the restriction of $f$ to its Julia set.    

If we choose the basepoint and the connecting paths $\ell_x$ inside $\mathcal{M}$ and compute the iterated monodromy groups using~\eqref{eq:imgr}, then 
the group $\img{f, \mathcal{M}}$ will be realized as a subgroup of $\img{f}$, acting on the same rooted tree $\alb^\ast$. For every compact set $C\subset\mathbb{PC}^1\setminus P_f$ there exists $n$ such that $f^{-n}(C)\subset\mathcal{M}$ (since each $f^n(\overline{D}_i)$ is disjoint from $C$ for $n$ sufficiently large). Consequently \eqref{eq:imgr} implies that for every element $g\in\img{f}$ there exists $n$ such that $g|_v\in\img{f, \mathcal{M}}$ for all $v\in\alb^n$. In particular, the self-similar group $\img{f}$ is contracting with the same nucleus as $\img{f, \mathcal{M}}$, and hence it also has the same limit dynamical system. \qedhere

\end{proof}

\subsection{One-dimensional limit spaces}\label{subsec:one-dim}

The following theorem is proved in~\cite{nek:dimension}.

\begin{thm}
\label{th:onedim}
Suppose that the limit space $\lims$ (equivalently the limit $G$-space $\limg$) of a contracting finitely generated group $(G, \bim)$ has topological dimension 1. Then there is $n_0$ and a contracting model $I\colon \Delta_0\otimes\bim^{\otimes n_0}\arr\Delta_0$, where $\Delta_0$ is a locally finite connected graph on which $G$ acts properly and co-compactly by automorphisms.
\end{thm}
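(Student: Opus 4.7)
The plan is to start from an arbitrary contracting model of $(G,\bim)$ and prune it to a $1$-dimensional one after iterating the structure map sufficiently many times. By the existence result for contracting models cited after Definition~\ref{def:contractingGmap}, we may fix a contracting $G$-equivariant map $I:\X\otimes\bim\arr\X$ with $\X$ a connected locally finite simplicial complex and each $I_x$ piecewise affine. Choose a basis $\alb\subset\bim$ and a compact set $\Tcal\subset\X$ with $I^n(\Tcal\otimes v)\subset\Tcal$ for all $n$ and $v\in\alb^n$, as in Lemma~\ref{lem:lengthofsections}; its $G$-translates cover $\X$, and $\diam(I^n(\Tcal\otimes v))\le C\lambda^n$ for some $\lambda<1$.

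The next step is to exploit the $1$-dimensionality of $\limg$. By Theorem~\ref{th:contrmodels}, $\limg$ is the inverse limit of $(\X\otimes\bim^{\otimes n}, I_n)$, so it is realized as the nested intersection of the level-$n$ tile covers, whose combinatorial intersection pattern is controlled by the nucleus via Proposition~\ref{pr:intersectingtiles}. Standard dimension theory yields, for any compact metric space of topological dimension~$1$, arbitrarily fine polyhedral covers whose nerves are $1$-dimensional. Combining this with the local finiteness of the tile cover and the cocompact isometric $G$-action on $\X$, I would fix $n_0$ large and construct a $G$-equivariant subdivision of $\X$ together with a connected $G$-invariant $1$-dimensional subcomplex $\Delta_0$ with the property that $\Delta_0$ is a deformation retract of a $G$-invariant open neighborhood $U$ containing all the level-$n_0$ tile images. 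The retraction $r:U\arr\Delta_0$ can be taken to be $G$-equivariant and piecewise affine by equivariantly triangulating a fundamental domain and extending by the action.

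Finally, I would set $I':=r\circ I^{n_0}:\Delta_0\otimes\bim^{\otimes n_0}\arr\Delta_0$, enlarging $n_0$ if necessary so that $I^{n_0}(\Delta_0\otimes v)\subset U$ for all $v\in\alb^{n_0}$, which is possible by strict contraction of $I^n$. Equivariance of $I'$ follows from equivariance of both factors; contraction follows from the strict contraction of $I^{n_0}$ together with the bounded Lipschitz constant of $r$ on a compact fundamental domain. Local finiteness and connectedness of $\Delta_0$ are built in by construction. By Theorem~\ref{th:contrmodels} applied to $(G,\bim^{\otimes n_0})$ and the new model $I'$, the inverse system has limit $G$-equivariantly homeomorphic to $\limg$, so $\Delta_0$ really is a $1$-dimensional contracting model of $(G,\bim^{\otimes n_0})$.

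The main technical obstacle is the construction in the second step: one must produce a $G$-equivariant $1$-dimensional subcomplex $\Delta_0$ together with an equivariant retraction $r$ whose image absorbs all iterated tile images. This is the step that genuinely uses the topological dimension hypothesis, through the existence of $1$-dimensional polyhedral approximations to $\Tcal$ at every sufficiently small scale, made compatible with the $G$-action via cocompactness. Without the dimension assumption one could not hope to replace the higher-dimensional model $\X$ by a graph, and the whole argument rests on being able to build $\Delta_0$ and $r$ \emph{simultaneously} in a way that remains equivariant under iteration of the structure map.
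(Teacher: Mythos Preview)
The paper does not prove this theorem at all; it is quoted verbatim from \cite{nek:dimension} (see the sentence immediately preceding the statement). So there is no in-paper proof to compare against, and your task was really to reproduce an argument from that reference.

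Your outline is in the right spirit, and the three-step architecture (start from an arbitrary simplicial contracting model, replace it by a graph using the dimension hypothesis, then compose with a retraction) is indeed how the argument in \cite{nek:dimension} is organised. However, what you have written is a plan, not a proof: you explicitly flag the second step as ``the main technical obstacle'' and then do not carry it out. That step is the entire content of the theorem. Two concrete issues:

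\begin{itemize}
\item The inference ``$\limg$ has covering dimension $1$, hence there is a $G$-invariant $1$-dimensional subcomplex $\Delta_0\subset\X$ which is an equivariant deformation retract of a $G$-invariant neighbourhood $U$'' is not justified. Dimension theory gives you fine open covers of the compact set $\Tcal/G\subset\lims$ (or of $\limg$ itself) with $1$-dimensional nerve; it does not hand you a subcomplex of the ambient higher-dimensional complex $\X$ together with a retraction. Turning a nerve statement about $\limg$ into a subcomplex-plus-retraction statement inside $\X$ is exactly the work done in \cite{nek:dimension}, and it requires a careful inductive construction, not an appeal to ``standard dimension theory''.
\item Even granting $\Delta_0$ and $r$, the contraction of $I'=r\circ I^{n_0}$ is not automatic: $r$ is piecewise affine on a neighbourhood $U$, but its Lipschitz constant can blow up near $\partial U$, so ``bounded Lipschitz constant on a compact fundamental domain'' needs an argument that the images $I^{n_0}(\Delta_0\otimes v)$ stay uniformly away from $\partial U$. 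This again uses the specific way $\Delta_0$ and $U$ are built.
\end{itemize}

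In short: correct strategy, but the proposal stops precisely where the proof begins.
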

Below we suppose to be in the situation of Theorem \ref{th:onedim}. After replacing $\bim$ by $\bim^{\otimes n_0}$, we assume that $n_0=1$.

Denote $\Delta_n=\Delta_0\otimes\bim^{\otimes n}$. Since the action of $G$ on $\Delta_0$ is by automorphisms of graphs, the spaces $\Delta_n$ are also graphs. 

Denote by $\Gamma_n$ the quotient $\Delta_n/G$ and by $\pi_n\colon \Delta_n\to \Gamma_n$ the quotient projection. Then $\Gamma_n$ is a finite graph. The maps $I_n\colon \Delta_{n+1}\arr\Delta_n$ induce continuous maps $\phi_n\colon \Gamma_{n+1}\arr\Gamma_n$. We may assume that in the original action of $G$ on $\Delta_0$, no pair of neighbouring vertices of $\Delta_0$ belong to the same $G$-orbit (otherwise pass to the barycentric subdivision of $\Delta_0$). In particular, $\Gamma_0$ has no loops.

\begin{notation} \label{notation:traverses} Denote by $U_z$ the ball of radius 1/3 with center in a vertex $z$ of the graph $\G_0=\Delta_0/G$.  It is a bouquet of segments of length 1/3. (We use the combinatorial distance in the graphs, identifying each edge with a real segment of length 1.)

Fix a basepoint $\xi_0\in\Delta_0$. 
Denote by $U_{z, n}$ the set of elements $v\in\alb^n$ such that $\pi_0(I^n(\xi_0\otimes v))\in U_z$. 

\end{notation}

\begin{defn} \label{d-traverses}
Let $s_ts_{t-1}\ldots s_1$ be a word in elements $s_i\in G$ (formally, an element of the free group with basis  $G$). Let $z_1, z_2$ be vertices of $\Gamma_0$. A \emph{traverse of level $n$ from $z_1$ to $z_2$} of the word $s_ts_{t-1}\ldots s_1$ is a quadruple $(i, j, v_1, v_2)$, where $1\le i\le j\le t$,  $v_1\in U_{z_1, n}$, $v_2\in U_{z_2, n}$ for $z_1\ne z_2$ are such that $s_k\cdots s_{i+1}s_i(v_1)\notin U_{z_1, n}\cup U_{z_2, n}$ for all $i<k<j$, and $s_js_{j-1}\cdots s_i(v_1)=v_2$.
\end{defn}

In other words, a traverse is a segment of the word $s_ts_{t-1}\ldots s_1$ corresponding to a travel from a point of $U_{z_1, n}$ to a point of $U_{z_2, n}$ such that the sets $U_{z_1, n}$ and $U_{z_2, n}$ are not touched during the travel.

We denote by $\tau_n(s_ts_{t-1}\ldots s_1)$ the total number of traverses of level $n$ for the word $s_ts_{t-1}\ldots s_1$.

\begin{thm}
\label{th:traverses} Retain all previous assumptions and notation. Let $S$ be a finite generating set for $G$.
There exists $C>1$ such that for all $n$ big enough and every element $g=s_ts_{t-1}\cdots s_1\in G$, where $s_i\in S$, we have
\[\sum_{v\in\alb^n} l(g|_v)\le C\tau_n(s_ts_{t-1}\cdots s_1)+C|\alb^n|.\]
\end{thm}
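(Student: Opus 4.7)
The plan is to reduce the bound on $\sum_v l(g|_v)$ to a displacement estimate in the graph $\Delta_0$ via Lemma~\ref{lem:lengthofsections}, and then to analyse, for each starting vertex $v \in \alb^n$, a lifted trajectory in $\Delta_0$. Fix $\xi_0 \in \Tcal$. Lemma~\ref{lem:lengthofsections} provides a constant $C_1 > 0$ such that
\[l(g|_v) \le C_1\, d_{\Delta_0}\bigl(I^n(\xi_0 \otimes v),\, I^n(\xi_0 \cdot g \otimes v)\bigr) + C_1\]
for every $v \in \alb^n$ and every $g \in G$. Summing over $v$, it suffices to establish
\[\sum_{v \in \alb^n} d_{\Delta_0}\bigl(I^n(\xi_0 \otimes v),\, I^n(\xi_0 \cdot g \otimes v)\bigr) \le C_2\, \tau_n(s_t\cdots s_1) + C_2\, |\alb^n|.\]

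For each $v \in \alb^n$, set $g_k = s_k\cdots s_1$, $v_k = g_k(v)$, and define the trajectory
\[q_k := I^n(\xi_0 \otimes v_k) \cdot g_k|_v \in \Delta_0 \qquad (k = 0, 1, \ldots, t),\]
so that $q_0 = I^n(\xi_0 \otimes v)$ and $q_t = I^n(\xi_0 \cdot g \otimes v)$. Using the identity $\xi_0 \cdot s_k \otimes v_{k-1} = (\xi_0 \otimes v_k)\cdot s_k|_{v_{k-1}}$, the $G$-equivariance of $I^n$, and the fact that $G$ acts on $\Delta_0$ by isometries, one computes $d(q_{k-1}, q_k) = d(I^n(\xi_0 \otimes v_{k-1}),\, I^n(\xi_0 \cdot s_k \otimes v_{k-1})) \le C\lambda^n D$, where $C, \lambda$ are the contracting constants of $I$ and $D = \max_{s\in S} d(\xi_0, \xi_0\cdot s)$. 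For $n$ large enough, each step of $(q_k)$ thus moves a fraction much smaller than the edge length $1$ of $\Delta_0$.

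The core estimate is a per-vertex geometric bound: for $n$ large enough,
\[d(q_0, q_t) \le C_3\,\bigl(\tau_n^{(v)} + 1\bigr),\]
where $\tau_n^{(v)}$ counts the traverses $(i, j, v_1, v_2)$ realised along the trajectory of $v$, i.e.\ those with $v_1 = v_{i-1}$. The picture is that the projected trajectory $\pi_0(q_k) = \pi_0(I^n(\xi_0 \otimes v_k)) \in \G_0$ makes jumps far smaller than the edge length, so $(q_k)$ remains inside the interior of a single edge of $\Delta_0$ whenever $v_k$ avoids $\bigcup_z U_{z, n}$. The only way for the trajectory to switch which edge of $\Delta_0$ it lies on is through a passage from one incident edge to another at some vertex neighbourhood $\pi_0^{-1}(U_z)$; each such edge change can be charged to at least one traverse in the sense of Definition~\ref{d-traverses}, by pairing it with the $U_{z_1, n}$- and $U_{z_2, n}$-visits flanking the passage (with $z_1, z_2$ the other endpoints in $\G_0$ of the entry and exit edges). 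Since edges of $\Delta_0$ have length $1$, $d(q_0, q_t)$ is bounded by the number of distinct edges visited by $(q_k)$, up to an $O(1)$ boundary term from the initial and terminal segments of the trajectory.

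Because each $g_k$ acts as a permutation on $\alb^n$, every traverse $(i, j, v_1, v_2)$ corresponds to a unique starting vertex $v = g_{i-1}^{-1}(v_1) \in \alb^n$ (with the convention $g_0 = e$), so $\sum_v \tau_n^{(v)} = \tau_n(s_t\cdots s_1)$. Combining the three displays yields the theorem with $C = C_1(C_3 + 1)$. The main obstacle is the core estimate: while the heuristic \emph{edge changes of the lift require traverses} is clear, its rigorous implementation within the abstract framework of the contracting model $I$ (as opposed to an explicit automaton, as in earlier work on bounded-activity groups) requires careful bookkeeping---in particular, to ensure that trajectories making many in-and-out excursions at a single vertex neighbourhood still produce a traverse for each genuine edge switch of $(q_k)$, and that the $O(1)$ boundary contributions really sum to only $O(|\alb^n|)$.
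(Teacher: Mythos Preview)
Your proposal is correct and follows essentially the same route as the paper: reduce to a displacement estimate in $\Delta_0$ via Lemma~\ref{lem:lengthofsections}, track the trajectory $q_k = I^n(\xi_0\cdot g_k\otimes v)$ (the paper writes it as $I^n(\xi_i\otimes v)$ with $\xi_i=\xi_0\cdot s_i\cdots s_1$, which is the same point), observe that consecutive steps have length $<1/3$ for $n$ large, and then bound the net displacement by counting traverses using that $\pi_0(q_k)=\pi_0(I^n(\xi_0\otimes v_k))$.

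The one place where the paper is cleaner is precisely the bookkeeping issue you flag at the end. Rather than counting ``edge switches'' of $(q_k)$ at vertex neighbourhoods and charging each one to a traverse between the \emph{other} endpoints $z_1,z_2$ of the entry and exit edges (which runs into the awkwardness you anticipate when the trajectory makes repeated in-and-out excursions, or when $z_1=z_2$ because $\Gamma_0$ has multiple edges), the paper concatenates short geodesics $\delta_i$ into a path $\gamma$ and counts \emph{full edge crossings} of $\gamma$: subcurves going from one endpoint of an edge $e$ of $\Delta_0$ to the other while staying in the interior. Since $\Gamma_0$ has no loops, the two endpoints of $e$ project to distinct vertices $z_1\ne z_2$ of $\Gamma_0$, and each crossing directly yields a traverse from $z_1$ to $z_2$ (take the last $i$ with $q_i$ within $1/3$ of the entry endpoint and the first $j>i$ with $q_j$ within $1/3$ of the exit endpoint). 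The bound $d(q_0,q_t)\le (\text{number of crossings})+2$ is then immediate, and injectivity of the assignment crossing $\mapsto$ traverse is automatic. This dissolves your ``careful bookkeeping'' obstacle in one stroke.
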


\begin{proof}
By item~\ref{i-attractor} in Lemma~\ref{lem:lengthofsections}, we may choose a compact set $\mathcal{T}\ni \xi_0$ such that $I^n(\xi\otimes v)\in \mathcal{T}$ for every $\xi\in \mathcal{T}$ and every $n, v\in \alb^n$. Then item \ref{i-SM} in the same lemma (which can be applied because a connected graph is an almost geodesic space) implies that there exists $C_1>0$ such that for every $g\in G$ and $v\in \alb^n$, the length of $g|_v$ is bounded from above by $C_1d(I^n(\xi_0\otimes v), I^n(\xi_0\cdot g\otimes v))+C_1$.
Since the $G$ action is isometric, the quantity   $d(\xi_0\cdot g, \xi_0 \cdot sg)$  is uniformly bounded for $s\in S$ and $g\in G$. Hence there exists $n_0$ such that $d(I^n(\xi_0\cdot g \otimes v),I^n(\xi_0\cdot sg \otimes v))<1/3$ for every $g\in G, s\in S$ and $v\in \alb^n, n\ge n_0$.  Fix $v$. Consider $g=s_t\cdots s_1$ as in the statement of the theorem, and set $\xi_i=\xi_0\cdot s_i\cdots s_1$. Then the distance between two consecutive points $I^n(\xi_i\otimes v)$ is at most 1/3, and 
\[l(g|_v)\leq C_1d(I^n(\xi_0\otimes v), I^n(\xi_t\otimes v))+C_1. \]
 Let $\delta_i$ be a geodesic curve joining $I^n(\xi_{i-1} \otimes v)$ to $I^n(\xi_{i} \otimes v)$ and $\gamma=\delta_t\cdots \delta_1$ be their concatenation (where the rightmost curve is passed first). Consider an edge $e$ which is crossed by $\gamma$, meaning that there is a subcurve $\gamma|_{[t_1, t_2]}$ such that $\gamma(t_1), \gamma(t_2)$ are the extreme points of the edge and $\gamma(s)$ is in the interior of $e$ for $s\in (t_1, t_2)$. For every such edge-crossing we can find $i<j$ such that $I^n(\xi_i\otimes v)$ and $I^n(\xi_j\otimes v)$ are at distance at most 1/3 from the two extreme points of $e$, and every  $I^n(\xi_k\otimes v)$ with $i<k<j$  is in the interior of $e$  at distance at least  1/3 from the extreme points. The distance between $d(I^n(\xi_0\otimes v), I^n(\xi_t\otimes v))$ is not larger than the total number of  edge crossings of $\gamma$ plus 2. 
 
 Now recall that the quotient projection $\pi_0\colon \Delta_0\to \Gamma_0$ maps edges homeomorphically to edges, and moreover
\[\pi_0(I^n(\xi_i\otimes v))=\pi_0(I^n(\xi_0 \otimes s_i\cdots s_1(v))\cdot s_i\cdots s_1|_v)=\pi_0(I^n(\xi_0 \otimes s_i\cdots s_1(v))).\]
We deduce that to every edge-crossing of  $\gamma$ we can associate (injectively) a traverse. Thus there is a constant $C>0$ such that
\[l(g|_v)\le C\tau_{n, v}(s_t\cdots s_1)+C,\]
where $\tau_{n,v}$ denotes the number  traverses at level $n$ of the form $(i, j, v_1, v_2)$ with  $v_1=s_i\cdots s_1(v)$ and $v_2=s_j\cdots s_1(v)$. Summing over $v$ we obtain the statement of the theorem. \qedhere
\end{proof}

\section{Conformal dimension of contracting groups}

\subsection{Visual metric}

Let $(G, \bim)$ be a contracting group. Choose a basis $\alb$ of $\bim$ and let $\nuke$ be the corresponding nucleus of $G$. The \emph{self-similarity complex} is the graph with set of vertices $\xs$ in which two vertices are connected by an edge in one of the following two situations: either they are of the form $v, xv$ for $v\in\xs$ and $x\in\alb$ (\emph{vertical edges}), or they are of the form $v, g(v)$ for $v\in\xs$ and $g\in\nuke$ (\emph{horizontal edges}).

\begin{notation}
    Given two vertices $x_n\ldots x_2x_1$ and $y_m\ldots y_2y_1$, we denote by $\ell(x_n\ldots x_2x_1, y_m\ldots y_2y_1)$  the largest $k\le\min(n, m)$ such that there exists $g\in\nuke$ such that $g(x_k\ldots x_2x_1)=y_k\ldots y_2y_1$.
\end{notation}

    Below we use the notation $(x | y)_o$ for the Gromov product in a metric space (with respect to a basepoint $o$), namely
    \[(x|y)_o=\frac{1}{2}(d(x, o)+d(y, o)-d(x, y)).\]
The following theorem is proved in~\cite{nek:hyplim} (see also~\cite[Theorem~3.8.8]{nek:book}). The statement about the Gromov product follows directly from the proof.

\begin{thm} \label{t-hyperbolicity}
The self-similarity complex is Gromov hyperbolic. Its boundary is homeomorphic to the limit space $\lims$, where a point of $\lims$ represented by $\ldots x_2x_1\in\xmo$ corresponds to the limit of the geodesic path $(x_1, x_2x_1, x_3x_2x_1, \ldots)$ in the self-similarity complex.

For every basepoint $o$, there exists a constant $C$ such that for any $v=x_n\ldots x_2x_1$ and $w=y_m\ldots y_2y_1$ we have
\[\lvert (v |  w)_\varnothing-(v | w)_o\rvert \leq C.\] 
\end{thm}

For $\xi, \eta\in \lims$, define $\ell(\xi, \eta)$ to be the  the largest $n$ such that there exist representatives $\xi=\cdots x_2x_1$ and $\eta=\cdots y_2y_1$ such that $g(x_n\ldots x_2x_1)=y_n\ldots y_2y_1$ for some $g\in\nuke$. It follows from Theorem \ref{t-hyperbolicity} that $\ell(\cdot, \cdot)$ coincides (up to an additive constant) with the natural extension of the Gromov product to $\lims$. Note that we define $\ell(\xi, \xi)=\infty$ for $\xi\in\lims$. 

The following is a classical result of the theory of Gromov-hyperbolic spaces, see \cite[\S3]{ghysdelaharpe}.

\begin{thm}
There exists $\alpha_0>0$ such that for every $\alpha<\alpha_0$ there exists a metric $d_\alpha$ on $\lims$ and a constant $C>1$ such that 
\begin{equation}
    \label{eq:visual}
C^{-1}e^{-\alpha\ell(\xi_1, \xi_2)}\le d_\alpha(\xi_1, \xi_2)\le Ce^{-\alpha\ell(\xi_1, \xi_2)}
\end{equation}
for all $\xi_1, \xi_2\in\lims$.
\end{thm}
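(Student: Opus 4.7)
The plan is to recognize this statement as the standard construction of visual metrics on the Gromov boundary of a Gromov-hyperbolic space, applied to the self-similarity complex. By the previous theorem, the self-similarity complex is hyperbolic with Gromov boundary $\lims$, and the Gromov product of two boundary points $\xi_1, \xi_2$ represented by sequences $\ldots x_2x_1, \ldots y_2y_1\in\xmo$ coincides, up to a uniform additive constant, with $\ell(\xi_1,\xi_2)$. Consequently it suffices to produce a metric on $\lims$ whose distance is comparable to $e^{-\alpha\ell(\xi_1,\xi_2)}$.

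First I would introduce the pre-metric $\rho_\alpha(\xi,\eta):=e^{-\alpha\ell(\xi,\eta)}$ on $\lims$ (with $\rho_\alpha(\xi,\xi)=0$) and establish a quasi-ultrametric inequality of the form
\[
\rho_\alpha(\xi,\eta)\le K_\alpha\max\bigl(\rho_\alpha(\xi,\zeta),\,\rho_\alpha(\zeta,\eta)\bigr),
\]
with $K_\alpha=e^{\alpha(\delta+C')}$, where $\delta$ is the hyperbolicity constant of the self-similarity complex and $C'$ absorbs the additive error between the Gromov product and $\ell$. This follows from the usual three-point inequality for the Gromov product in a hyperbolic space: $\ell(\xi,\eta)\ge\min(\ell(\xi,\zeta),\ell(\zeta,\eta))-\delta-O(1)$. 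Combinatorially, one can also verify it directly from the definition of $\ell$ using the nucleus: if $g_1\in\nuke$ realises the first $n_1$ coordinates matching between $\xi$ and $\zeta$, and $g_2\in\nuke$ the first $n_2$ coordinates between $\zeta$ and $\eta$, then $g_2g_1$ acts on the first $\min(n_1,n_2)$ coordinates of $\xi$ and lands in $\nuke^{\otimes 2}$; the contracting property uniformly bounds the number of shifts needed to return to $\nuke$.

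Next I would choose $\alpha_0>0$ so that $K_\alpha\le 2^{\alpha/\alpha_0}$ (or more concretely $K_\alpha\le 2$) for all $\alpha<\alpha_0$, and then define
\[
d_\alpha(\xi,\eta)=\inf\sum_{i=0}^{k-1}\rho_\alpha(\xi_i,\xi_{i+1}),
\]
the infimum taken over finite chains $\xi=\xi_0,\xi_1,\ldots,\xi_k=\eta$ in $\lims$. The standard Frink chain lemma (see Ghys--de la Harpe, Ch.~7) then yields $\tfrac12\rho_\alpha\le d_\alpha\le\rho_\alpha$, so that $d_\alpha$ is a metric with the two-sided bounds~\eqref{eq:visual}; verifying the triangle inequality is immediate from the definition, and the non-degeneracy follows because $\ell(\xi,\eta)<\infty$ when $\xi\ne\eta$.

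The only genuinely non-routine step is the quasi-ultrametric inequality, and even this is straightforward once one uses the identification of the Gromov product with $\ell$. Thus the main content of the proof is bookkeeping of constants; everything else is a literal invocation of the construction of visual metrics on the boundary of a Gromov-hyperbolic graph, as cited.
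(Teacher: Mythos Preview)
Your proposal is correct and is exactly the standard construction of visual metrics on the boundary of a Gromov-hyperbolic space; the paper itself does not prove this theorem but simply cites it as a classical result from Ghys--de la Harpe, so your sketch in fact supplies more detail than the paper does.
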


Note that (for a fixed value of $\alpha$) the metric $d_\alpha$ is unique up to bi-Lipschitz equivalence. We call it the \emph{visual metric of exponent $\alpha$}.

If $f, g$ are real-valued functions with the same domain of definition, we write $f\asymp g$ if there exists a constant $C>0$ such that $C^{-1} f \leq g\leq C f$.

It follows from Proposition~\ref{pr:intersectingtiles} that for every $\zeta\in\lims$ the set \[T(\zeta, n)=\{\xi\in\lims : \ell(\xi, \zeta)\ge n\}\]
is equal to the union of the tiles $\Tcal_v$ of the $n$th level intersecting tiles of the $n$th level that contain $\zeta$. Note that $T(\zeta, n)$ is a union of not more than $|\nuke|^2$ tiles of the $n$th level, by Proposition \ref{pr:intersectingtiles}. For later use, we also record the following tightly related observation.
\begin{lem}\label{l-multiplicity-shadows}
    Fix $\zeta\in \lims$. Then for every $n\ge 1$, we have 
    \[|\{v\in \alb^n \colon \exists \xi\in \mathcal{T}_v \text{ such that }\zeta\in T(\xi, n)\}| \le |\nuke|^3.\]
\end{lem}
\begin{proof} Fix $u\in \alb^n$ such that $\zeta\in \mathcal{T}_u$. 
If $v\in \alb^n$ belongs to the set in the statement of the lemma, then there exists $\xi\in \mathcal{T}_v$ and $v_1, v_2\in \alb^n$ such that $\xi\in \mathcal{T}_v\cap \mathcal{T}_{v_1}, \mathcal{T}_{v_1}\cap \mathcal{T}_v\neq \varnothing$ and $\zeta\in \mathcal{T}_{v_2}\cap \mathcal{T}_u$. It follows from Proposition \ref{pr:intersectingtiles} that $v=h(u)$ for some $h\in \nuke^3$.  
\end{proof}

Note also that if $d_\alpha$ is a visual metric, then we have
\begin{equation}
    \label{eq:Tnvisual}
B_{d_\alpha}(\zeta, C^{-1}e^{-\alpha n})\subset T(\zeta, n)\subset B_{d_\alpha}(\zeta, Ce^{-\alpha n}),
\end{equation}
where $B_{d_\alpha}(x, R)$ denotes the ball of radius $R$ with center in $x$, and $C$ satisfies~\eqref{eq:visual}.

Let $\mu$ be the push-forward of the uniform Bernoulli measure on $\xmo$ by the quotient map $\xmo\arr\lims$. Since the quotient map is at most $|\nuke|$-to-one, we have, for every $v\in\alb^n$
\[|\alb|^{-n}\le\mu(\Tcal_v)\le |\nuke|\cdot|\alb|^{-n}.\]

It follows that $\mu(T(\zeta, n))\asymp |\alb|^{-n}$, so 
\[\mu(B_{d_\alpha}(\zeta, R))\asymp |\alb|^{-\frac{-\ln R}{\alpha}}=\exp\left(\frac{\ln R\ln|\alb|}{\alpha}\right)=R^{\frac{\ln|\alb|}{\alpha}}.\]

%

\subsection{Conformal dimension}
A homeomorphism $f\colon \X_1\arr\X_2$ between metric spaces $(\X_1, d_1)$ and $(\X_2, d_2)$ is said to be \emph{quasisymmetric} if there exists a homeomorphism $\eta\colon [0, \infty)\arr [0, \infty)$ such that 
\[\frac{d_2(f(x), f(y))}{d_2(f(x), f(z))}\le\eta\left(\frac{d_1(x, y)}{d_1(x, z)}\right)\] for all $x, y, z\in\X_1$ such that $x\neq z$.

We refer to \cite[Chapter 10]{Heinonen} for general properties of quasisymmetric homeomorphism. It is easy to see that the inverse of every quasisymmetric homeomorphism is also quasisymmetric \cite[Proposition 10.6]{Heinonen}. 

  Two metrics on the same underlying topological space are said to be quasisymmetric to each other if the identity map is quasisymmetric. 

\begin{defn}
\label{def:conformalmetric}
A metric $d$ on a space $\X$ is said to be \emph{Ahlfors-regular} if there exists $\beta>0$ and a measure $\mu$ on $\X$ such that 
\[\mu(B_d(x, R))\asymp R^\beta\]
for all $x\in\X$ and $R<\diam(\X)$. The exponent $\beta$  is equal to the Hausdorff dimension of $d$. 
\end{defn}

Let $(G, \bim)$ be a contracting self-similar group. 
It follows directly from the definitions that any two visual metrics $d_{\alpha_1}, d_{\alpha_2}$ on $\lims$ are quasisymmetric to each other.
 We have seen that a visual metric $d_\alpha$ is Ahlfors-regular of Hausdorff dimension $\frac{\ln|\alb|}{\alpha}$.

\begin{defn}
We say that a metric $d$ on $\lims$ is \emph{quasi-conformal} if it is quasi-symmetric to a visual metric.

The infimum of the set of Hausdorff dimensions of Ahlfors-regular quasi-conformal metrics on $\lims$ is called the \emph{Ahlfors-regular conformal dimension} of $\lims$ and is denoted by $\cdim\lims$ or $\cdim(G, \bim)$.
\end{defn}

\begin{rem}\label{r-top-dimension} It is a general fact that $\cdim \lims$ is always greater than or equal to the topological dimension of $\lims$ (see \cite[Theorem 1.4.12]{mackay_tyson}). On the other hand, the topological dimension of $\lims$ is zero if and only if $G$ is locally finite \cite[Theorem 4.1]{nek:dimension}. Consequently, for every finitely generated infinite contracting group $(G, \bim)$ we have $\cdim \lims \ge 1$.
\end{rem}






\begin{prop}
\label{pr:confdim}
Let $d$ be an Ahlfors--regular quasi-conformal metric on $\lims$. 
 \begin{enumerate}[label=(\roman*)]

\item \label{i-quasi-round} There exists  $C_1>0$ and $n_0$ such that for every $\xi \in \lims$ and every $n\ge n_0$, there exists $r$ such that $B_d(\xi,  r)\subset T(\xi, n)\subset B_d(\xi, C_1r)$.

\item \label{i-diam-exponential} There exist $C_2>0$ and $\lambda\in (0, 1)$ such that \[\sup_{\xi\in\lims}\diam_d(T(\xi, n))\le C_2\lambda^n\] for all $n$.
\end{enumerate}

\end{prop}

\begin{proof}
Let $\eta\colon [0, \infty)\arr[0, \infty)$ be a homeomorphism such that $\frac{d(x, y)}{d(x, z)}\le\eta\left(\frac{d_\alpha(x, y)}{d_\alpha(x, z)}\right)$ for all $x, y, z\in\lims$ with $x\neq z$, where $d_\alpha$ is a visual metric on $\lims$. Recall (see \eqref{eq:Tnvisual}) that there exists a constant $C>1$ such that 
\[B_{d_\alpha}(\xi, C^{-1} e^{-n\alpha})\subset T(\xi, n)\subset B_{d_\alpha}(\xi, C e^{-n\alpha}). \]
 In particular, the statement in \ref{i-quasi-round} is true for the metric $d_\alpha$, and it is easy to see that the same conclusion must hold for any quasisymmetric metric. Let us give a proof for completeness. Fix $\xi$ and $n$, and set $R\colon=2\sup\{d(\xi, x)\colon x\in T(\xi, n)\}$.
Then $T(\xi, n)\subset B_d(\xi, R)$, and we can choose $y\in T(\xi, n)$ such that $d(\xi, y)>\frac{1}{4} R$.
Suppose that $z\notin T(\xi, n)$. Then $d_\alpha(\xi, y)\leq Ce^{-n\alpha}$ and $d_\alpha(\xi, z)\geq C^{-1}e^{-n\alpha}$, so
\[\frac{d(\xi, y)}{d(\xi, z)}\leq \eta\left(\frac{d_\alpha(\xi, y)}{d_\alpha(\xi, z)}\right)\leq \eta(C^2),\]
and therefore 
\[d(\xi, z)\geq \eta(C^2)^{-1}d(\xi, y)\geq  (4\eta(C^2))^{-1}R.\]
Since this is true for every $z\notin T(\xi, n)$, it follows that 
\[B_d(\xi, C_1^{-1}R)\subset T(\xi, n)\subset B_d(\xi, R)\]
with $C_1=4\eta(C^2)$, which implies \ref{i-quasi-round}.

We now prove \ref{i-diam-exponential}.  Let $\xi\in \lims$ and $n, m \ge 0$. Then for every $x\in T(\xi, n+m)\subset\Tcal(\xi, n)$ there exists $y\in T(\xi, n+m)$ such that $2d(x, y)\ge \diam_d(T(\xi, n+m))$. Similarly, there exists $z\in T(\xi, n)$ such that $d_\alpha(x, z)\ge \diam_{d_\alpha}( T(\xi, n))/2$. Then
\[
    \frac{\diam_d( T(\xi, n+m))}{\diam_d( T(\xi, n))}\le \frac{2d(x, y)}{d(x, z)}\le2\eta\left(\frac{d_\alpha(x, y)}{d_\alpha(x, z)}\right)\le 2\eta\left(\frac{\diam_{d_\alpha}(T(\xi, n+m))}{\diam_{d_\alpha}(T(\xi, n))/2}\right).
\]
Now note that $\diam_{d_\alpha}(T(\xi, n)) \asymp e^{-n\alpha}$ (where the implicit multiplicative constant does not depend on $\xi$ and $n$). 
  It follows that there exists $C_3>0$ such that
\[\frac{\diam_{d_\alpha}(T(\xi, n+m))}{\diam_{d_\alpha}(T(\xi, n))}\le C_3e^{-m\alpha}\]
for all $\xi\in \lims$ and $n, m\ge 0$. Since $\eta\colon [0, \infty)\arr[0, \infty)$ is a homeomorphism, this implies that there exist $m_0$ and $\lambda_0\in (0, 1)$ such that $\frac{\diam_d(T(\xi, n+m_0))}{\diam_d(T(\xi, n))}\le\lambda_0$ for all $\xi$ and $n$. This implies the statement in \ref{i-diam-exponential}.
\end{proof}

\section{Contraction coefficients}

\subsection{$\ell^p$-contraction} In this section we study $\ell^p$-contraction coefficients, first introduced in the second author's book \cite[\S 4.3.5]{nek:dyngroups}.

Let $(G, \bim)$ be a contracting finitely generated group.
Fix some finite generating set of $G$, and let $l(g)$ be the corresponding word length of $g\in G$.

Fix $p\in (0, \infty)$. The \emph{$\ell^p$-contraction coefficient} of the group is
\[\eta_p=\lim_{n\to\infty}\sqrt[n]{\limsup_{l(g)\to\infty}\frac{\left(\sum_{v\in\alb^n}l(g|_v)^p\right)^{1/p}}{l(g)}}.\]
The $\ell^\infty$-contraction coefficient $\eta_\infty$ is defined in the same way, replacing the $\ell^p$-norm of the vector $(l(g|_v))_{v\in\alb^n}$ by its $\ell^\infty$-norm $\max_{v\in\alb^n}l(g|_v)$. The existence of the first limit in the formula defining $\eta_p$ follows from the lemma below. 

\begin{lem}
\label{lem:subadditive}
Denote 
\[\eta_{p, n}=\limsup_{l(g)\to\infty}\frac{\left(\sum_{v\in\alb^n}l(g|_v)^p\right)^{1/p}}{l(g)}.\]
Then $\eta_{p, n_1+n_2}\le\eta_{p, n_1}\eta_{p, n_2}$.
\end{lem}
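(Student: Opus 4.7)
The plan is to exploit the cocycle identity for sections, $g|_{v_1 v_2} = (g|_{v_1})|_{v_2}$ for $v_1\in\alb^{n_1}$ and $v_2\in\alb^{n_2}$, and to chain the defining inequality of $\eta_{p,n_2}$ applied to each outer section with the defining inequality of $\eta_{p,n_1}$, glued by the triangle inequality in $\ell^p(\alb^{n_1})$. The argument is a standard submultiplicativity calculation; the only point requiring a little care is the handling of group elements of small word length, where the $\limsup$-type definition does not yet yield the desired inequality, but this is absorbed into an additive constant that vanishes in the final limit.

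To set this up, denote $M_{n_2}(h) := \left(\sum_{v_2 \in \alb^{n_2}} l(h|_{v_2})^p\right)^{1/p}$. By the definition of $\eta_{p,n_2}$, for every $\epsilon>0$ there is $L_0$ such that $M_{n_2}(h) \le (\eta_{p,n_2}+\epsilon)\, l(h)$ whenever $l(h) \ge L_0$. Since $\{h : l(h) < L_0\}$ is finite, $M_{n_2}$ is bounded on it by some $K = K(\epsilon)$, so
\[ M_{n_2}(h) \le (\eta_{p,n_2}+\epsilon)\, l(h) + K \qquad\text{for every } h \in G. \]

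Then I would rewrite the level-$(n_1+n_2)$ sum using the cocycle identity, apply this pointwise estimate to $h = g|_{v_1}$, and invoke the triangle inequality in $\ell^p(\alb^{n_1})$ to obtain
\[ \left(\sum_{v\in\alb^{n_1+n_2}} l(g|_v)^p\right)^{1/p} = \left(\sum_{v_1\in\alb^{n_1}} M_{n_2}(g|_{v_1})^p\right)^{1/p} \le (\eta_{p,n_2}+\epsilon)\left(\sum_{v_1\in\alb^{n_1}} l(g|_{v_1})^p\right)^{1/p} + K\,|\alb|^{n_1/p}. \]
Finally, applying the definition of $\eta_{p,n_1}$ to the remaining sum (which for $l(g)$ large enough is bounded by $(\eta_{p,n_1}+\epsilon) l(g)$), dividing by $l(g)$, taking $\limsup_{l(g)\to\infty}$, and letting $\epsilon \to 0$, I obtain $\eta_{p,n_1+n_2} \le \eta_{p,n_1}\eta_{p,n_2}$. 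The additive term $K|\alb|^{n_1/p}$ disappears after dividing by $l(g) \to \infty$, which is why the bookkeeping for short elements causes no problem. For the regime $p \in (0,1)$, the $\ell^p$ triangle inequality should be replaced by the subadditivity $(a+b)^p \le a^p + b^p$ (plus a constant when extracting $p$-th roots), but this affects only constants and not the conclusion.
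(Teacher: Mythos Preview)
Your proof is correct and essentially identical to the paper's: both absorb the short-element defect into an additive constant (you write $M_{n_2}(h)\le(\eta_{p,n_2}+\epsilon)l(h)+K$, the paper writes the equivalent $\sum_v l(h|_v)^p\le\lambda_2^p(l(h)+C)^p$), then chain the two levels via the cocycle identity and the $\ell^p$-triangle inequality, and finally divide by $l(g)$ and let the slack parameter tend to zero. The paper likewise dispatches $p\in(0,1)$ in one line by replacing the triangle inequality for $\|\cdot\|_p$ with subadditivity of $\sum|\cdot|^p$, just as you indicate.
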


\begin{proof}
For every $\lambda_i>\eta_{p, n_i}$, there exists $l_i>0$ such that for every $g$  with $l(g)\ge l_i$ we have
\[\sum_{v\in\alb^{n_i}}l(g|_v)^p\le\lambda_i^p l(g)^p.\]
It follows that there exists a constant $C$ not depending on $g$ (e.g., $C=\max(l_1, l_2)$) such that
\[\sum_{v\in\alb^{n_i}}l(g|_v)^p\le\lambda_i^p (l(g)+C)^p\]
for all $g\in G$. 

Suppose at first that $p\ge 1$. We have then, using the triangle inequality for the $p$-norm:
\begin{multline*}
\sum_{v\in\alb^{n_1+n_2}} l(g|_v)^p=\sum_{v_1\in\alb^{n_1}}\sum_{v_2\in\alb^{n_2}}l(g|_{v_1v_2})^p\le\sum_{v_1\in\alb^{n_1}}\lambda_2^p(l(g|_{v_1})+C)^p\le\\
\left(\lambda_2C|\alb^{n_1}|^{\frac{1}{p}}+\lambda_2\left(\sum_{v_1\in\alb^{n_1}}l(g|_{v_1})^p\right)^{1/p}\right)^p\le\\
\left(\lambda_2C|\alb^{n_1}|^{\frac{1}{p}}+\lambda_2\lambda_1(l(g)+C)\right)^p
\end{multline*}
which implies that
\[\eta_{p, n_1+n_2}\le\lambda_1\lambda_2,\]
for all $\lambda_1>\eta_{p, n_1}$ and $\lambda_2>\eta_{p, n_2}$.

The case $p\in (0, 1)$ is analogous, but using the triangle inequality for the norm $(x_i)\mapsto \sum |x_i|^p$.
\end{proof}

It is shown in~\cite[Proposition~4.3.12]{nek:dyngroups} that $\eta_p$ does not depend on the choice of the finite generating set or the choice of the basis $\alb\subset\bim$. (This is true for all $p\in (0, \infty]$.)

It is shown in~\cite[Proposition~4.3.15]{nek:dyngroups} that for all $0<p<q\le\infty$ we have
\begin{equation}
    \label{eq:etapinequalities}
\eta_p\ge\eta_q\ge|\alb|^{-1},\quad |\alb|^{-1/p}\eta_p\le|\alb|^{-1/q}\eta_q.
\end{equation}
In particular, the function $p\to\eta_p$ is non-increasing and continuous.

\begin{defn}
The \emph{critical exponent} $p_c(G, \bim)$ of a contracting group is the infimum of the set of values $p$ such that $\eta_p<1$.
\end{defn}

\begin{prop}
\label{prop:largescalesumG}
Suppose that $(G, \bim)$ is a finitely generated contracting group, and $l(\cdot)$ is a word length on $G$ (associated to some finite symmetric generating set). Fix  a basis $\alb\subset\bim$ and let $\nuke$ be the corresponding nucleus. Let $p>0$ be such that $\eta_p<1$. Then there exists a constant $C>0$ and $\eta\in (0, 1)$ such that for every $g\in G$ and $n\ge 0$ we have
\[\sum_{v\in\alb^n, g|_v\notin \nuke}l(g|_v)^p\le C\eta^nl(g)^p.\]
In particular
\[\sum_{v\in\alb^\ast, g|_v\notin \nuke}l(g|_v)^p\le C_1l(g)^p,\]
for $C_1=C/(1-\eta)$.
\end{prop}

\begin{proof}
Without loss of generality (since the statement that we want to prove is invariant under changing the word metric to a bi-Lipschitz equivalent one), we may suppose that the generating set $S$ used to define the word metric contains the nucleus and is such that $s|_v\in S$ for every $v\in\xs$ and $s\in S$ (note that every finite generating set of $G$ is contained in a finite generating set satisfying these conditions). Then the  cocycle rule $gh|_v=g|_{h(v)} h|_v$ implies that $l(g|_v)\le l(g)$ for every $g\in G$ and $v\in \alb^\ast$.

We first claim that it is enough to find some $l_0\ge 2$,  and constants $C$ and $\eta$ (possibly depending on $l_0$) such that for every $n\ge 0$, we have
\begin{equation} \label{e-reduction-to-large-l} \sum_{v\in\alb^{n}, l(g|_v)\ge l_0}l(g|_v)^p\le C \eta^n l(g)^p.\end{equation}
Indeed since $G$ is contracting, for every $l_0\ge 2$ there exists $k$ (depending only on $l_0$) such that if $l(g)< l_0$, then $g|_v\in \nuke$ for all $|v|\ge k$.
For $n\ge k$ and for every $v\in \alb^n$, we have that $l(g|_v)$ is not greater than $l(g|_w)$, where $w$ is the unique prefix of $v$ in $\alb^{n-k}$. Since the projection map from $\alb^n$ to $\alb^{n-k}$ is $|\alb|^{k}$-to-1, this implies that
\[\sum_{v\in \alb^n, g|_v\notin \nuke} l(g|_v)^p\le |\alb|^k \cdot  \sum_{v\in \alb^{n-k}, l(g|_v)\ge l_0} l(g|_v)^p. \]
Note also that if $n\le k$, we have $\sum_{v\in \alb^n} l(g|_v)^p\le |\alb|^k l(g)^p$. 
It follows that if we prove \eqref{e-reduction-to-large-l} for some $l_0 \ge 0$ and some constants $C, \eta$ and for every $n \ge 1$, we will also prove the conclusion in the statement  (for some larger constants $C', \eta'$). 

It is also easy to see (arguing similarly) that we can replace $\alb$ by any power $\alb^{n_0}$, i.e., prove \eqref{e-reduction-to-large-l} only for $n$ divisible by $n_0$.
Indeed this follows from  the same reasoning as above,  that implies that for any $n=kn_0+r$, with $0<r<n_0$, we have  
\[\sum_{v\in \alb^n, l(g|_v) \ge l_0}l(g|_v)^p\le |\alb|^{n_0} \sum _{v\in \alb^{kn_0}, l(g|_v)\ge l_0} l(g|_v)^p.\]

If $\eta_p<1$, then there exist $n_0$, $l_0$, and $\eta<1$ such that 
\[\sum_{v\in\alb^{n_0}}l(g|_v)^p\le\eta l(g)^p\]
for all $g\in G$ such that $l(g)\ge l_0$. Redefine the length by setting
\[\overline l(g)=\left\{\begin{array}{rl}l(g) & \text{if $l(g)\ge l_0$}\\ 0 & \text{otherwise.}\end{array}\right.\]
If $\overline{l}(g)=0$,  then $\overline{l}(g|_v)=0$ for all $v\in \alb^{n_0}$, since $l(g|_v)\le l(g)$. If $\overline{l}(g)\neq 0$, then 
\[\sum_{v\in\alb^{n_0}}\overline l(g|_v)^p\le \sum_{v\in\alb^{n_0}} l(g|_v)^p \le \eta l(g)^p= \eta\overline  l(g)^p,\]
so that in either case we have $\sum_{v\in\alb^{n_0}}\overline l(g|_v)^p\le \eta\overline l(g)^p$,
for all $g\in G$. Consequently,
\[\sum_{v\in\alb^{kn_0}, l(g|_v)\ge l_0} l(g|_v)^p=\sum_{v\in\alb^{kn_0}}\overline l(g|_v)^p\le \eta^k \overline l(g)^p\le \eta^k l(g)^p,\]
which finishes the proof. \qedhere
\end{proof}

\begin{defn}
The \emph{portrait} of an element $g\in G$ is the set $P(g)$ of finite words $v\in\xs$ such that $g|_u$ does not belong to the nucleus $\nuke$ of $G$ for any proper prefix $u$ of $v$. If $g\in\nuke$, then we set $P(g)=\{\emptyset\}$.
\end{defn}

The portrait is prefix-closed, i.e., a rooted subtree of $\xs$. Let us denote by $L(g)$ the set of leaves of this subtree, i.e., the set of words $v\in\xs$ such that $g|_v\in\nuke$ but $g|_u\notin\nuke$ for any proper prefix $u$ of $v$.

The following bound on the size of the portrait follows immediatly from Proposition~\ref{prop:largescalesumG}.

\begin{prop}
\label{prop:portraisize}
Let $(G, \bim)$ be a contracting group, and choose a basis $\alb\subset\bim$. Let $p\ge 1$ be such that $\eta_p<1$. Then there exist $\eta\in (0, 1)$ and a constant $C>1$  such that
\[|P(g)\cap\alb^n|\le C\eta ^nl(g)^p\]
and \[|P(g)|\le C_1l(g)^p\]
for every $g\in G$ and $n\ge 0$, with $C_1=C/(1-\eta)$
\end{prop}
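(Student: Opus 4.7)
The strategy is to imitate the proof of Proposition \ref{prop:largescalesumG}, using one additional elementary ingredient: since the identity belongs to the nucleus, any section $g|_u \notin \nuke$ automatically has $l(g|_u) \ge 1$. First fix $n_0$, a threshold $l_0$, and $\eta \in (0,1)$ exactly as in that proof, so that with the cut-off length $\bar l(h) = l(h) \mathbf{1}_{l(h) \ge l_0}$ one has $\sum_{v \in \alb^{kn_0}} \bar l(g|_v)^p \le \eta^k \bar l(g)^p$ for all $g \in G$ and $k \ge 0$.

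The core observation is that every $v \in P(g) \cap \alb^n$ with $n \ge 1$ has a parent $u \in \alb^{n-1}$ which is itself a proper prefix of $v$, so $g|_u \notin \nuke$, and hence $l(g|_u) \ge 1$. Each such parent contributes at most $|\alb|$ children to $P(g) \cap \alb^n$, so
\[
|P(g) \cap \alb^n| \le |\alb| \cdot \#\{u \in \alb^{n-1} : g|_u \notin \nuke\} \le |\alb| \sum_{u \in \alb^{n-1}} l(g|_u)^p,
\]
the second inequality using $l(g|_u)^p \ge 1$ on the relevant terms. Combining this with the iterated $\ell^p$-contraction estimate across scale $n_0$ yields by induction on $k$ a bound of the form $|P(g) \cap \alb^{kn_0}| \le C \eta^k l(g)^p$, and interpolating to non-multiples of $n_0$ costs only a factor of $|\alb|^{n_0}$. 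Since $\eta$ can be chosen arbitrarily close to $\eta_p^{pn_0}$, and since $p \ge 1$ together with $\eta_p < 1$ give $\eta_p^{pn} \le \eta_p^n$, absorbing constants yields the first inequality $|P(g) \cap \alb^n| \le C \eta_p^n l(g)^p$. The second bound $|P(g)| \le C l(g)^p$ then follows immediately by summing the geometric series $\sum_n \eta_p^n$.

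The main obstacle, entirely parallel to that in Proposition \ref{prop:largescalesumG}, is the handling of ``short but non-nuclear'' sections with $1 \le l(g|_v) < l_0$, which are not controlled by the $\ell^p$-contraction estimate. Such sections take values in a finite set $F \subset G \setminus \nuke$; by the contracting property, there is a uniform depth $N$ such that $h|_w \in \nuke$ for every $h \in F$ and every $w \in \alb^{\ge N}$. Consequently, the portraits of these short sections are truncated below depth $N$, and their contributions to $|P(g) \cap \alb^n|$ at levels beyond $N$ vanish, so they can be absorbed into the base case of the induction while the inductive step is driven purely by the ``large'' sections to which the $\ell^p$-contraction bound applies.
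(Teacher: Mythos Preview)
Your proposal is correct and matches the paper's approach: the paper gives no separate proof but simply says the proposition is proved in the same way as Proposition~\ref{prop:largescalesumG}, and your argument carries this out, adding only the elementary observation that $g|_u\notin\nuke$ forces $l(g|_u)\ge 1$ so that the count of non-nuclear sections at a given level is dominated by an $\ell^p$-sum of section lengths. Your treatment of short non-nuclear sections via their uniformly bounded portrait depth is exactly the device already used in the proof of Proposition~\ref{prop:largescalesumG}.
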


The portrait $P(g)$ of an element $g$ together with the labeling of the elements $v\in P(g)$ by the permutations the sections $g|_v$ induce on the first level $\alb\subset\xs$ and the labeling of the elements of $L(g)$ by the sections $g|_v\in\nuke$ uniquely determine $g$. This, together with Proposition~\ref{prop:portraisize}, implies that the growth rate of the group $G$ is bounded from above by $e^{Cn^\alpha}$ for every $\alpha>p_c(G, \bim)$. 

\subsection{Critical exponent and conformal dimension}

\begin{thm}
\label{th:critexp}
Let $(G, \bim)$ be a finitely generated contracting group. Its critical exponent $p_c$ is not greater than $\cdim\lims$.
\end{thm}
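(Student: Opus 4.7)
The plan is to show that $\eta_p<1$ for every $p>\beta$, whenever $d$ is an Ahlfors-regular quasi-conformal metric on $\lims$ of Hausdorff dimension $\beta$; taking the infimum over all such $d$ will then yield $p_c(G,\bim)\le\cdim\lims$. Throughout I write $r_v:=\diam_d(\Tcal_v)$ for $v\in\alb^n$. Ahlfors regularity combined with the fact that the tile cover of $\lims$ has at most $|\nuke|$-fold overlap gives $\sum_{v\in\alb^n}r_v^\beta\asymp 1$ uniformly in $n$, and Proposition~\ref{pr:confdim} gives $\max_{v\in\alb^n}r_v\to 0$ as $n\to\infty$.

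The main technical step, and the main obstacle, will be to produce a $G$-invariant proper cocompact metric $d_\X$ on a suitable contracting model $I:\X\otimes\bim\to\X$ adapted to the tile hierarchy of $d$, satisfying the following \emph{key estimate}: for every generator $s$ of $G$, every $\xi\in\X$, and every $v\in\alb^n$ with $n$ large enough that all sections $s|_v$ of generators lie in the nucleus,
\[
d_\X\bigl(I^n(\xi\otimes v),\,I^n(\xi s\otimes v)\bigr)\le C\,r_v.
\]
Writing $I^n(\xi s\otimes v)=I^n(\xi\otimes s(v))\cdot s|_v$, this amounts to a construction in which the piece $I^n(\X\otimes v)$ and the translated piece $I^n(\X\otimes s(v))\cdot s|_v$ lie within $d_\X$-distance $O(r_v)$ of each other. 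The crucial geometric input is Proposition~\ref{pr:intersectingtiles}, which guarantees that $\Tcal_v$ and $\Tcal_{s(v)}$ share a point of $\lims$ whenever $s\in\nuke$; by quasi-conformality this forces $r_{s(v)}\asymp r_v$. To carry this out I would start from the piecewise affine simplicial contracting models of \cite{nek:models,nek:dimension} and pull back $d$ from $\lims=\X/G$ to a $G$-invariant metric in which adjacent fundamental domains at level $n$ are separated by $\asymp r_v$, so that the bounded translation by the nucleus element $s|_v$ is absorbed into the scale $r_v$.

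Granting the key estimate, the rest of the argument is a standard chain computation. For $g=s_t\cdots s_1$ of length $t=l(g)$, telescoping along $\xi_0,\xi_0 s_1,\ldots,\xi_0 g$ together with the biset identity $I^n(\xi_0 s_{i+1}\cdots s_1\otimes v)=I^n(\xi_0\otimes v_{i+1})\cdot g_{i+1}$ (with $v_i:=(s_i\cdots s_1)(v)$ and $g_i:=(s_i\cdots s_1)|_v$) reduces the $i$-th increment, after cancelling $g_i$ by $G$-invariance, to $d_\X(I^n(\xi_0\otimes v_i),I^n(\xi_0 s_{i+1}\otimes v_i))\le C r_{v_i}$. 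Combined with Lemma~\ref{lem:lengthofsections} this gives $l(g|_v)\le C'\sum_{i=0}^{t-1}r_{v_i}+C'$. Raising to the $p$-th power via the power-mean inequality $\bigl(\sum_{i=1}^t a_i\bigr)^p\le t^{p-1}\sum_i a_i^p$, summing over $v\in\alb^n$, and swapping the two sums (each $v\mapsto v_i$ is a bijection of $\alb^n$), we obtain
\[
\sum_{v\in\alb^n}l(g|_v)^p\le C''\,l(g)^p\,S_n+C''|\alb|^n,\qquad S_n:=\sum_{u\in\alb^n}r_u^p.
\]

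Finally, for $p>\beta$, the elementary bound $S_n\le(\max_u r_u)^{p-\beta}\sum_u r_u^\beta\le C'''(\max_u r_u)^{p-\beta}$ together with Proposition~\ref{pr:confdim} yields $S_n\to 0$. Dividing the previous display by $l(g)^p$ and taking $\limsup_{l(g)\to\infty}$ gives $\eta_{p,n}^p\le C''S_n<1$ for $n$ sufficiently large, and the submultiplicativity of Lemma~\ref{lem:subadditive} upgrades this to $\eta_p\le\eta_{p,n}^{1/n}<1$. Hence $p_c(G,\bim)\le p$ for every $p>\beta$, and letting $\beta\searrow\cdim\lims$ completes the proof. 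The analytic portion (the chain argument and the Hölder bound on $S_n$) is routine given the Ahlfors-regular control $\sum_v r_v^\beta=O(1)$; the real work is the construction of the tile-adapted metric $d_\X$ satisfying the key estimate.
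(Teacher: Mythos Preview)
Your approach is the paper's own. The telescoping via Lemma~\ref{lem:lengthofsections}, the power-mean inequality, the Ahlfors-regular control $\sum_v r_v^\beta=O(1)$, and the H\"older splitting $S_n\le(\max_u r_u)^{p-\beta}\sum_u r_u^\beta\to 0$ (from Proposition~\ref{pr:confdim}) all appear there, with only cosmetic differences: the paper uses the outer radii $R_i$ of the sets $T(\xi,n)$ rather than the tile diameters $r_v$, and a measure-covering bound in place of your bijection swap $\sum_v r_{v_i}^p=S_n$. Both bookkeeping choices yield the same inequality.

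The step you correctly flag as ``the real work''---the tile-adapted $G$-invariant metric satisfying the key estimate---is the only substantive gap, and your sketch (``pull back $d$ from $\lims=\X/G$'') is not quite right: the quotient $\X/G$ is not $\lims$ unless $\X=\limg$. The paper works directly on $\limg$ and builds the metric by hand. One first sets
\[
d_0(\zeta_1,\zeta_2)=\inf\Bigl\{\diam_d\bigl((\Tcal S)\otimes v_1/G\bigr)+\diam_d\bigl((\Tcal S)\otimes v_2/G\bigr)\Bigr\},
\]
the infimum taken over all $n\ge 0$ and $v_1,v_2\in\bim^{\otimes n}$ with $\zeta_i\in(\Tcal S)\otimes v_i$ and $(\Tcal S)\otimes v_1\cap(\Tcal S)\otimes v_2\ne\emptyset$; then $d_1$ is the associated chain metric. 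This $d_1$ is $G$-invariant by construction, dominates the pushforward to $(\lims,d)$, and is compatible with the topology of $\limg$, so the action remains proper and cocompact and Lemma~\ref{lem:lengthofsections} applies. The key estimate then drops out of the definition of $d_0$: the consecutive points $I^n(\xi_0\otimes v_i)$ and $I^n(\xi_0\cdot s_{i+1}\otimes v_i)$ lie in adjacent level-$n$ pieces of $\limg$ whose images in $\lims$ have $d$-diameter $\asymp r_{v_i}$. With this construction inserted, your argument is complete and matches the paper.
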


\begin{proof}
We can assume that $G$ is infinite (else $p_c=0$).  Hence $\cdim \lims \ge 1$ (see Remark \ref{r-top-dimension}). Let $d$ be an Ahlfors-regular quasiconformal metric on $\lims$. Let $\nu$ be the corresponding Hausdorff measure, and let $\alpha$ be its Hausdorff dimension. Take $\beta>\alpha$. It is enough to prove that $p_c<\beta$, i.e., that $\eta_\beta<1$. 

Let $S$ be a finite symmetric generating set of $G$.
Consider a product $g=s_m\ldots s_2s_1$ of of elements of $S$. Let $n_0$ be such that all sections $s|_v$, for $s\in S$ and $v\in \alb^{n_0}$, belong to the nucleus. Then for every $u\in\alb^n$ and $v_0\in\alb^{n_0}$ consider the sequence
\[v_0u,\quad s_1(v_0u),\quad s_2s_1(v_0u), \ldots,\quad s_m\ldots s_2s_1(v_0u).\]
Denote by $u_0, u_1, \ldots, u_m$ the corresponding suffixes of length $n$. Then $u_i=s_i|_{s_{i-1}\cdots s_1(v_0)}(u_{i-1})$, so the tiles $\Tcal_{u_i}$ and $\Tcal_{u_{i-1}}$ intersect in $\lims$, and the tiles $\Tcal\otimes u_i$ and $\Tcal\otimes u_{i-1}$ intersect in $\limg$.

Choose $\xi_{i, v_0u}\in\Tcal_{u_i}\cap\Tcal_{u_{i-1}}$ for every $i=1, 2, \ldots, m$. Let $R_{i, v_0u}$ and $r_{i, v_0u}$ be the smallest and the largest radii such that 
\[B_d(\xi_{i, v_0u}, r_{i, v_0u})\subseteq T(\xi_{i, v_0u}, n)\subseteq  B_d(\xi_{i, v_0u}, \frac{1}{2}R_{i, v_0u}).\]
By Proposition \ref{pr:confdim}~\ref{i-quasi-round} and by Ahlfors--regularity, we can fix a constant $C>0$ (depending only on the chosen metric and measure on $\lims$)  such that $R_{i, v_0u}/r_{i, v_0u}\in [1, C]$   and such that $\nu(T(\xi_{i, v_0u}, n))\ge C^{-1}r_{i, v_0u}^\alpha\ge C^{-1-\alpha}R_{i, v_0u}^\alpha$.

Denote by $M_n$ the maximum over $\xi\in\lims$ of the minimal radius $R$ such that $T(\xi, n)\subset B_d(\xi, \frac{1}{2}R)$. It follows from Proposition~\ref{pr:confdim} that $M_n\to 0$ as $n\to \infty$.

Consequently (using $\beta>1$),
\begin{multline*}(R_{1, v_0u}+R_{2, v_0u}+\cdots +R_{m, v_0u})^\beta\le m^{\beta-1}(R_{1, v_0u}^\beta+R_{2, v_0u}^\beta+\cdots+R_{m, v_0u}^\beta)\le\\
m^{\beta-1}M_n^{\beta-\alpha}(R_{1, v_0u}^\alpha+R_{2, v_0u}^\alpha+\cdots+R_{m, v_0u}^\alpha)\le\\
m^{\beta-1}M_n^{\beta-\alpha}C^{1+\alpha}(\nu(T(\xi_{1, v_0u}, n))+\nu(T(\xi_{2, v_0u}, n))+\cdots+\nu(T(\xi_{m, v_0u}, n))).
\end{multline*}
Since $\xi_{i, v_0u}\in \mathcal{T}_u$, Lemma \ref{l-multiplicity-shadows} tells us that for every $v_0$ and $i$ we have
\[\sup_{\zeta\in \lims} \sum_{u\in\alb^n} \mathbbm{1}_{T(\xi_{i, v_0u}, n)}(\zeta)\le |\nuke|^3,\]
where we denote by $\mathbbm{1}_{A}$ the indicator function of a subset $A\subset \lims$. It follows that
\[\sum_{v_0\in\alb^{n_0}, u\in\alb^n}\nu(T(\xi_{i, v_0u}, n))=\sum_{v_0\in\alb^{n_0}}\int_{\lims}\left(\sum_{u\in \alb^n} \mathbbm{1}_{T(\xi_{i, v_0u}, n)}\right)d\nu \le |\alb|^{n_0}\cdot |\nuke|^3\cdot\nu(\lims).\]
Hence, adding all the inequalities over all $v_0\in\alb^{n_0}, u\in\alb^n$, we will get
\begin{multline*}\sum_{v_0\in\alb^{n_0}, u\in\alb^n}(R_{1, v_0u}+R_{2, v_0u}+\cdots+R_{m, v_0u})^\beta\le \\ |\alb|^{n_0}\cdot |\nuke|^3\cdot\nu(\lims)\cdot C^{1+\alpha}\cdot M_n^{\beta-\alpha}\cdot m^{\beta -1} \le
KM_n^{\beta-\alpha}m^\beta\end{multline*}
for some constant $K>0$.
\begin{lem}
There exists a constant $C_1>0$, depending only on $G$, $\alb$, and $d$, such that
$l(g|_{v_0u})\le C_1(R_{1, v_0u}+R_{2, v_0u}+\cdots+R_{m, v_0u})+C_1$.
\end{lem}

\begin{proof}
Let us define a metric on $\limg$. Denote $\Tcal S=\bigcup_{s\in S}\Tcal\cdot s$.  Let $\zeta_1, \zeta_2\in\limg$. 
Take all $v_1, v_2\in\bim^{\otimes n}$ for all $n\ge 0$ such that $\zeta_i\in(\Tcal S)\otimes v_i$ and $(\Tcal S)\otimes v_1\cap(\Tcal S)\otimes v_2\ne\emptyset$, and then take the infimum $d_0(\zeta_1, \zeta_2)$ of $\diam((\Tcal S)\otimes v_1/G)+\diam((\Tcal S)\otimes v_2/G)$. Here $\bim^{\otimes 0}=G$. Denote the infimum by $d_0(\zeta_1, \zeta_2)$, which may be infinite. 

We obviously have that $d_0(\zeta_1, \zeta_2)$ is not smaller than the distance between the images of $\zeta_1$ and $\zeta_2$ in $\lims$. 

Define then $d_1(\zeta_1, \zeta_2)$ as the infimum of $d_0(\xi_0, \xi_1)+d_0(\xi_1, \xi_2)+\cdots+d_0(\xi_{n-1}, \xi_n)$ over all sequences $\xi_i$ such that $\xi_0=\zeta_1$ and $\xi_n=\zeta_2$. Then $d_1$ is a finite metric on $\limg$. It is obviously $G$-invariant and, by construction, $(\X, d_1)$ is an almost geodesic space. For every $\xi\in\limg$, the diameter of the union of the tiles of the $n$th level of $\limg$ containing $\xi$ is not larger than the diameter of the union of the tiles of the $n$th level of $\lims$ containing the image of $\xi$. Consequently, the metric $d_1$ is compatible with the topology on $\limg$.

Choose a point $\xi_0\in\Tcal$, and consider the sequence \[\xi_0\otimes v_0u,\quad \xi_0 \cdot s_1\otimes v_0u,\quad \xi_0\cdot s_2s_1 \otimes v_0u, \ldots, \quad\xi_0 \cdot s_m\cdots s_2s_1 \otimes v_0u.\]
Then $R_{1, v_0u}+R_{2, v_0u}+\cdots+R_{m, v_0u}$ is an upper bound on $d_1(\xi_0\otimes v_0u, \xi_0 \cdot g \otimes v_0u)$. The lemma follows then from Lemma~\ref{lem:lengthofsections}.
\end{proof}


Consequently, using the triangle inequality for the $\ell^\beta$-norm (since $\beta>1$),
\begin{multline*}
\left(\sum_{v\in\alb^{n+n_0}}l(g|_v)^\beta\right)^{1/\beta}\le\\
C_1\left(\sum_{v_0\in\alb^{n_0}, u\in\alb^n}(R_{1, v_0u}+R_{2, v_0u}+\cdots+R_{m, v_0u}+1)^\beta\right)^{1/\beta}\le\\
C_1\left(\sum_{v_0\in\alb^{n_0}, u\in\alb^n}(R_{1, v_0u}+R_{2, v_0u}+\cdots+R_{m, v_0u})^\beta\right)^{1/\beta}+C_1|\alb^{n+n_0}|^{1/\beta}\le\\
K_1M_n^{1-\frac{\alpha}{\beta}}\cdot l(g)+C_1|\alb^{n+n_0}|^{1/\beta},
\end{multline*}
for some $K_1>0$, which implies that $\limsup_{l(g)\to\infty}\frac{\left(\sum_{v\in\alb^{n+n_0}}l(g|_v)^\beta\right)^{1/\beta}}{l(g)}\le K_1M_n^{1-\frac{\alpha}{\beta}}$. Since $M_n\to 0$ as $n\to\infty$, Lemma~\ref{lem:subadditive} implies that $\eta_\beta<1$.
\end{proof}

\begin{rem}
The inequality in Theorem~\ref{th:critexp} is not sharp. For example, it is known that $\eta_1$ for the first Grigorchuk group $G$ is strictly less than 1 (this is the classical sum-contraction inequality of Grigorchuk \cite{grigorchukgrowth}), which implies by the inequality~\eqref{eq:etapinequalities} that its critical exponent is strictly less than 1. On the other hand, the limit space of the Grigorchuk group is a segment, hence we have $\cdim\lims=1$. In fact, it follows from the norm contracting inequality in \cite{bar-growth} and the explicit sequence of group elements which satisfies the reverse inequality (see \cite[Prop 4.7]{BE-perm}) that its critical exponent is equal to $\alpha_0=\log 2/\log \lambda_0\simeq 0.7674$, where $\lambda_0$ is the  positive real root of the polynomial $X^3-X^2-2X-4$. Recall the definition of critical constant for Liouville property in the introduction. From the random walk with nontrivial Poisson boundary constructed in \cite{EZ} for the purpose of volume lower estimate, we have that $\rm{Cr}_{Liouv}(G)$ is also equal to $\alpha_0$ and $\alpha_0$ is the growth exponent of $G$. 
\end{rem}




 
 \subsection{Thurston $p$-maps}
 
 Let $(G, \bim)$ be a self-similar group. In this subsection we present another bound on $p_c(G, \bim)$, related to Thurston's theorem (see~\cite{DH:Thurston}) characterizing post-critically finite branched self-coverings of the sphere that are realizable as complex rational functions.  (Note that this result will not be used in the proof of our main theorems in the introduction.) 
 
 Consider the linear span $V$ of the conjugacy classes of infinite order elements of $G$. We denote by $[g]$ the conjugacy class of an element $g\in G$ seen as an element of $V$. If $g$ is of finite order, then we define $[g]$ to be equal to zero. Choose a basis $\alb\subset\bim$ and the associated self-similar action on $\xs$. Let $g\in G$. Consider the action of $g$ on $\alb$, and for every cycle $x_1\mapsto x_2\mapsto \ldots\mapsto x_k\mapsto x_1$ of the action, consider the conjugacy class $[g|_{x_k}\cdots g|_{x_2}g|_{x_1}]=[(g^k)|_{x_1}]$. Note that it does not depend on the choice of the initial element $x_1$ of the cycle. Choose $p\ge 1$, and denote by $T_p([g])$ the sum of the elements $k^{1-p}[(g^k)|_{x_1}]\in V$ taken over all cycles of the action of $g$ on $\alb$. Since a change of the basis $\alb$ corresponds to conjugation of the wreath recursion by an element of the wreath product, the value of $T_p([g])$ does not depend on the choice of $\alb$.
 
 For example, if $a=\sigma(1, a)$ is the binary odometer, then we have $T_p([a])=2^{1-p}[a]$.

  We call the linear operator $T_p$ the \emph{Thurston's $p$-map}. 
 
 \begin{lem}
 \label{lem:Tpiterate}
 If $T_p$ is the Thurston's $p$-map for $(G, \bim)$, then the Thurston's $p$-map for $(G, \bim^{\otimes n})$ is $T_p^n$.
 \end{lem}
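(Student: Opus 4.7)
The natural approach is induction on $n$; the base case $n=1$ is definitional. For the inductive step, I would aim to prove the semigroup identity
\[
T_{p,\,n+1}([g]) \;=\; T_p\bigl(T_{p,n}([g])\bigr),
\]
where $T_{p,m}$ denotes the Thurston $p$-map for $(G, \bim^{\otimes m})$. Since $T_p$ is linear on $V$ and $T_{p,1}=T_p$, this identity iterated yields $T_{p,n} = T_p^n$ immediately.

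The identity itself I would prove by decomposing cycles of $g$ on $\alb^{n+1} = \alb^n \otimes \alb$ according to their projection to $\alb^n$. Fix a cycle $C = (v_1 \to \cdots \to v_m \to v_1)$ of $g$ on $\alb^n$ and set $h_C := (g^m)|_{v_1}$. The cocycle rule $(gh)|_v = g|_{h(v)}h|_v$, applied iteratively along $C$ (and using $g^m(v_1)=v_1$), gives $g^{jm}|_{v_1} = h_C^{\,j}$ for every $j\ge 1$, so the action of $g^m$ on the fiber $\{v_1\}\times\alb$ is exactly the action of $h_C$ on $\alb$. Consequently, cycles of $g$ on $\alb^{n+1}$ projecting onto $C$ are in bijection with cycles of $h_C$ on $\alb$: an $h_C$-cycle of length $k$ through $x\in\alb$ lifts to a $g$-cycle of length $mk$ through $v_1 x\in\alb^{n+1}$. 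One further application of the chain rule yields
\[
g^{mk}\bigm|_{v_1 x} \;=\; \bigl(g^{mk}|_{v_1}\bigr)\bigm|_x \;=\; (h_C^{\,k})\bigm|_x,
\]
so the contribution of this lifted cycle to $T_{p,n+1}([g])$ is $(mk)^{1-p}[(h_C^{\,k})|_x] = m^{1-p}\cdot k^{1-p}[(h_C^{\,k})|_x]$. Summing over all cycles of $h_C$ on $\alb$ gives $m^{1-p}\, T_p([h_C])$; summing further over all cycles $C$ of $g$ on $\alb^n$ and invoking linearity of $T_p$ produces $T_p(T_{p,n}([g]))$, as required.

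I do not anticipate any serious obstacle beyond careful bookkeeping. The one point that needs checking is that every term depends only on the conjugacy classes in play, so that the decomposition into cycles is well-defined in $V$: a different choice of representative $v_1$ inside $C$ replaces $h_C$ by a $G$-conjugate, and similarly a different choice of $x$ inside an $h_C$-cycle conjugates $(h_C^{\,k})|_x$. This is precisely the basis-independence already incorporated in the definition of $T_p$, so it poses no genuine difficulty; the combinatorial lift of cycles from $\alb^n$ to $\alb^{n+1}$ and the correct accounting of lengths $m$, $k$, $mk$ constitute the main (but routine) step.
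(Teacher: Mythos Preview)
Your proposal is correct and follows essentially the same approach as the paper. Both arguments decompose cycles of $g$ on $\alb^{n+1}$ via a projection and then analyze the fibers; the only cosmetic difference is that you project to $\alb^n$ and fiber over $\alb$ (proving $T_{p,n+1}=T_p\circ T_{p,n}$), whereas the paper projects to $\alb$ and fibers over $\alb^n$ (proving $T_{p,n+1}=T_{p,n}\circ T_p$), with the same key identity $(g^{mk})|_{vw}=\bigl((g^m)|_v\bigr)^k|_w$ appearing in either direction.
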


 \begin{proof}
Let $x_1\mapsto x_2\mapsto \cdots\mapsto x_k\mapsto x_1$ be a cycle of the action of $g$ on $\alb$. Denote $h=(g^k)|_{x_1}$.  For every cycle $v_1\mapsto v_2\mapsto\cdots\mapsto v_m\mapsto v_1$ of the action of $h$ on $\alb^n$ we get the cycle
\begin{multline*}
x_1v_1\mapsto x_2g|_{x_1}(v_1)\mapsto\cdots\mapsto x_k(g^{k-1})_{x_1}(v_1)\mapsto\\
x_1v_2\mapsto x_2g|_{x_1}(v_2)\mapsto\cdots\mapsto x_k(g^{k-1})_{x_1}(v_2)\mapsto\\
\ldots\\
x_1v_m\mapsto x_2g|_{x_1}(v_m)\mapsto\cdots\mapsto x_k(g^{k-1})_{x_1}(v_m)\mapsto x_1v_1
\end{multline*}
of length $km$ of the action of $g$ on $\alb^{n+1}$. We have 
$(g^{km})|_{x_1v_1}=h^m|_{v_1}$. A proof of the lemma now follows by induction.     
\end{proof}
 
 \begin{thm}
 \label{th:thurstonmap}
 Suppose that $U<V$ is a $T_p$-invariant subspace spanned by a finite set of conjugacy classes. If the $\ell^p$-contraction coefficient $\eta_p$ of $(G, \bim)$ is less than 1, then the spectral radius of $T_p|_U$ is also less than 1.
 \end{thm}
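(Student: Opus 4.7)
The plan is to reduce the spectral radius bound to a weighted Collatz--Wielandt inequality for the non-negative matrix $A$ of $T_p|_U$ in the basis $[h_1],\ldots,[h_m]$, using as weights the stable length $w_j:=\bar l(h_j)^p$, where $\bar l(g):=\lim_{m\to\infty}l(g^m)/m$ is a conjugacy-class invariant. The key analytic input is: for every infinite-order $h\in G$ and every $n\ge 1$,
\[
\sum_{C}\; k_C^{1-p}\,\bar l\bigl([h^{k_C}|_{x_C}]\bigr)^p \;\le\; \eta_{p,n}^p\,\bar l(h)^p,
\]
where $C$ ranges over cycles of $h$ on $\alb^n$, of length $k_C$ and representative $x_C$. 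To prove this, take $N$ a common multiple of the cycle lengths, so that $h^N$ fixes every $v\in\alb^n$ and $(h^N)|_v=((h^{k_C})|_v)^{N/k_C}$ for each $v\in C$. Feeding $g:=h^N$ into the definition of $\eta_{p,n}$ gives $\sum_v l(g|_v)^p\le(\eta_{p,n}+\epsilon)^p l(g)^p$ once $N$ is large. The elements $\{(h^{k_C})|_v\}_{v\in C}$ are pairwise conjugate in $G$ --- from $h\cdot h^{k_C}=h^{k_C}\cdot h$ one derives $(h^{k_C})|_{h(x_C)}=h|_{x_C}(h^{k_C})|_{x_C}h|_{x_C}^{-1}$, iterated around the cycle --- so they share a common stable length. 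Using the asymptotics $l(y^m)/m\to\bar l(y)$ and $l(h^N)/N\to\bar l(h)$, dividing by $N^p$ and sending $N\to\infty$ yields the inequality.

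By Lemma~\ref{lem:Tpiterate}, the matrix of $T_p^n|_U$ is $A^n$, and all its entries are non-negative by the definition of $T_p$. The displayed inequality, applied to each $h_j$, reads $(A^n)^{T}w\le\eta_{p,n}^{p}\,w$ componentwise. When $w>0$ strictly, the Collatz--Wielandt principle for non-negative matrices yields $\rho(A^n)\le\eta_{p,n}^p$, hence $\rho(A)\le\eta_{p,n}^{p/n}$; letting $n\to\infty$ and invoking $\eta_{p,n}^{1/n}\to\eta_p$ from Lemma~\ref{lem:subadditive} gives $\rho(T_p|_U)\le\eta_p^p<1$.

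The hard part is the degenerate case in which some $w_j=0$, i.e.\ some basis class $[h_j]$ is distorted ($\bar l(h_j)=0$). The key inequality forces $(A)_{ij}=0$ whenever $w_i>0$ and $w_j=0$, so the span $U'$ of the distorted basis classes is $T_p$-invariant and $A$ is block lower-triangular; the block on the non-distorted part is controlled exactly as in the strictly-positive case. The remaining block on $U'$ is the delicate step: one exploits the uniform contraction $\eta_\infty<1$ together with the finite-dimensionality of $U'$ (so that the sections $h_j^{k_C}|_{x_C}$ range over a fixed finite family of conjugacy classes of bounded word length) and estimates the unweighted sum $\sum_C k_C^{1-p}$ by splitting cycles according to whether $k_C$ is small or large. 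In the applications of the theorem developed below, the basis classes of $U$ all have positive stable length and this issue does not arise.
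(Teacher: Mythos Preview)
Your argument in the non-distorted case is correct and essentially parallel to the paper's: both produce a strictly positive vector $w$ with $(A^n)^T w\le\lambda_n w$ componentwise and invoke the Collatz--Wielandt bound. Your choice $w_j=\bar l(h_j)^p$ is cleaner and even yields the quantitative bound $\rho(T_p|_U)\le\eta_p^p$, which the paper does not record.

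The handling of the distorted block, however, is a genuine gap. Your sketch asserts that ``the sections $h_j^{k_C}|_{x_C}$ range over a fixed finite family of conjugacy classes of bounded word length,'' but only the \emph{conjugacy classes} are fixed; the word lengths of the actual sections are not bounded (the conjugating elements are unrestricted), so nothing limits the column sums $\sum_C k_C^{1-p}$ along the distorted block. The split into small/large cycle lengths does not help either: for $p>1$ the contribution of many short cycles dominates, and $\eta_\infty<1$ gives no direct control on how many cycles produce infinite-order sections. As written, the argument does not establish $\rho<1$ on $U'$, and you yourself note that the applications avoid this case.

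The paper sidesteps the issue altogether by replacing stable length with the actual word lengths at a large fixed power: it sets $\phi([g_j])=l(g_j^{Nk})^p$ for $N$ large, which is strictly positive simply because the $g_j$ have infinite order. The subadditive inequality $l(g_j^{k_2N})\ge k_1^{-1}l(g_j^{k_1k_2N})$ substitutes for homogeneity of $\bar l$, and the conjugators being drawn from a finite set gives $l(g_i^{Nk}|_x)\ge \rho\, l(g_j^{Nk/r})$ once $N$ is large. This produces exactly your Collatz--Wielandt inequality but with a vector that is always strictly positive, so no separate treatment of distorted classes is needed. Adopting this device (powers at a fixed large $N$ rather than stable length) would close the gap in your proof with minimal changes.
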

 
 In particular, we can use this theorem to find  lower estimates for $p_c(G, \bim)$, and hence for $\cdim(G, \bim)$.

A particular case of this theorem is related to one direction of Thurston's theorem (see~\cite{DH:Thurston}). In Thurston's theorem one considers subspaces spanned by conjugacy classes defined by disjoint simple closed curves of the punctured sphere (with some additional conditions). Such collections of curves are called \emph{multicurves}.

\begin{proof}
Let $\{[g_1], [g_2], \ldots, [g_m]\}$ be a finite spanning set of $U$. Let $N$ be a natural number such that the action of each element $g_i^N$ on $\alb$ is identical. Denote by $L_{i, kN}$ the length of $g_i^{kN}$ (with respect to some fixed finite generating set of the group). We have then $L_{i, k_1k_2N}\le k_1L_{i, k_2N}$ for all $i=1, \ldots, m$, $k_1, k_2\ge 1$. Equivalently, we have \[L_{i, k_2N}\ge k_1^{-1} L_{i, k_1k_2N}.\]

Note also that $L_{i, kN}\to\infty$ as $kN\to\infty$, since we assume that the elements $g_i$ are of infinite order.

Assume that $\eta_p<1$. Let $\eta$ be such that $\eta_p<\eta<1$. After replacing $\alb$ by $\alb^n$ for a large $n$ (using Lemma~\ref{lem:Tpiterate}) and assuming that $N$ is large enough, we will have
\[\sum_{x\in\alb}l(g_i^{Nk}|_x)^p\le \eta^p l(g_i^{Nk})^p\] for all $i$ and $k$.

For a cycle $x_1\mapsto x_2\mapsto\cdots\mapsto x_r\mapsto x_1$ of $g_i$, the corresponding sections $g_i^{Nk}|_{x_t}$ are conjugate to $\left(g_i^r|_{x_1}\right)^{\frac{Nk}{r}}$. If $g_i^r|_{x_1}$ has infinite order, then it is conjugate to one of the elements $g_j$. Since the number of corresponding conjugators is finite, we get
\[l(g_i^{Nk}|_{x_t})\ge L_{j, Nk/r}-C\]
for some constant $C$ that depends only on the spanning set $\{[g_1], [g_2], \ldots, [g_m]\}$.

Choose an arbitrary $\rho\in (\eta^p, 1)$, so that $\eta^{-p}\rho>1$. Assuming that $N$ is large enough, we will get $l(g_i^{Nk}|_{x_t})\ge \rho^{1/p} L_{j, Nk/r}$ for all $k\ge 1$ and all $1\le t\le r$. Consequently,
\[\sum_{t=1}^{r}l(g_i^{Nk}|_{x_t})^p\ge \rho rL_{j, Nk/r}^p\ge \rho r\left(r^{-1}L_{j, Nk}\right)^p=\rho r^{1-p}L_{j, Nk}^p.\]

It follows that
\[L_{i, Nk}^p=l(g_i^{Nk})^p\ge\eta^{-p}\sum_{x\in\alb}l(g_i^{Nk}|_x)^p\ge \eta^{-p}\rho\cdot \phi(T_p([g_i])),\]
where $\phi$ is the linear functional defined by
\[\phi([g_j])=L_{j, Nk}^p.\]
Consider the matrix of the dual operator $T_p^*$ in the basis of $U^*$ dual to the basis $([g_1], [g_2], \ldots, [g_m])$. The above inequality means that the $i$th coordinate $T_p^*(\phi)([g_i])=\phi(T_p([g_i])$ of $T_p^*(\phi)\in U^*$ is less than or equal to $\eta^p\rho^{-1}$ times the $i$th coordinate $\phi([g_i])$ of $\phi$. Since all coordinates of $\phi$ are positive, the Perron-Frobenius theorem implies that the spectral radius of $T_p^*$ is strictly less than 1. Consequently, the spectral radius of $T_p$ is also strictly less than 1.
\end{proof}

As an example of an application of  Theorem~\ref{th:thurstonmap} (and Thurston's theorem), consider a mating of two cubic polynomials, found by  M.~Shishikura and L.~Tan, see~\cite{mitsutanlei}.

The iterated monodromy group is generated by
\begin{alignat*}{2}
a_1 &= (12)(1, 1, a_3), &\quad b_1&= (1, b_3, 1),\\
a_2 &= (a_1, 1, 1), &\quad b_2&=(b_1, 1, 1),\\
a_3 &= (02)(a_3^{-1}, 1, a_2a_3), &\quad b_3 &= (012)(b_3^{-1}, 1, b_2b_3).
\end{alignat*}
Note that $a_1a_2a_3=b_1b_2b_3=\tau$, where $\tau=(012)(1, 1, \tau)$. The groups $\langle a_1, a_2, a_3\rangle$ and $\langle b_1, b_2, b_3\rangle$ are the iterated monodromy groups of the mated cubic polynomials.

It is checked directly that the group is contracting with the nucleus consisting of the elements $a_3, a_2a_3, b_3, b_2b_3, a_1a_2a_3=b_1b_2b_3$, their inverses, and the identity element. Moreover, the wreath recursion is contracting for the free group (of rank 5), hence the limit space has topological dimension 1, see \cite[Theorem 4.4]{nek:dimension}.



Post-conjugating the wreath recursion by $(1, b_3^{-1}, b_3^{-1})$, and passing to the generating set $b_1, b_2, b_3, x=b_1^{-1}a_1$, $y=b_3a_3^{-1}$ (we have $b_2^{-1}a_2=yx^{-1}$ in the faithful quotient), we get an equivalent wreath recursion:
\begin{alignat*}{2}
b_1 &= (1, b_3, 1), &\quad x&=(12)(1, 1, y^{-1}),\\
b_2 &= (b_1, 1, 1), &\quad y&=(12)(x, 1, y^{-1}),\\
b_3 &= (012)(1, 1, b_2). & & 
\end{alignat*}

Let $U$ be the formal linear span of the conjugacy classes 
$[x], [x^{-1}], [y], [y^{-1}]$.
Thurston's 2-map $T_2$ acts by the rule
\begin{alignat*}{2}
[x]&\mapsto\frac 12[y^{-1}], &\quad[x^{-1}]&\mapsto\frac 12[y],\\
[y]&\mapsto[x]+\frac 12[y^{-1}],&\quad[y^{-1}]&\mapsto[x^{-1}]+\frac 12[y].
\end{alignat*}

It is easy to see that the spectral radius of $T_2|_U$ is equal to 1. In particular, this means, by Thurston's theorem, that the mating is \emph{obstructed}, i.e., is not equivalent to a rational function. Theorem~\ref{th:thurstonmap} implies that the conformal dimension of the limit space of the group is at least 2.

\section{Random walk and Liouville property}
\label{s-Liouville}
\subsection{Random walk preliminaries}
\subsubsection{Notation} Let $G$ be a group endowed with a symmetric probability measure $\mu$. The measure $\mu$  is said to be \emph{non-degenerate} if its support $\supp \mu$ generates $G$ as a semi-group. If $G$ is finitely generated, the measure $\mu$ is said to have \emph{finite $p$-moment} for $p>0$ if $\sum \mu(g) l(g)^p<+\infty$, where $l(\cdot)$ is the word length on $G$ associated to some finite symmetric generating set. We denote by $(\gbf_t)$ the  (left) random walk associated to $\mu$, namely $\gbf_t=\mathbf{s}_t\cdots \mathbf{s}_1$, where $(\mathbf{s}_i)_{i\ge 1}$ is a sequence of independent random variables with distribution $\mu$. 
\subsubsection{Capacities and random walk on Schreier graphs}

 Assume that $G$ acts on a countable set $\Omega$. Then for each starting point $\omega\in \Omega$, the process $(\gbf_t \cdot \omega)$ is a Markov chain on $\Omega$ with transition probabilities $p(v, w)=\sum_{g\cdot v=w} \mu(g)$. The assumption that $\mu$ is symmetric implies that the induced Markov chain is reversible with respect to the counting measure on $\Omega$, which we use as the reference measure in the Dirichlet forms below.
For $p\ge 1$, the  associated \emph{$p$-Dirichlet form} on $\ell^p(\Omega)$ is 
\[\mathcal{E}_{p, \mu}(f)=\frac{1}{2}\sum_{g\in G, v\in\Omega}\mu(g)|f(v)-f(g\cdot v)|^p, \quad \quad f\in \ell^p(\Omega).\]
   Let $A, B$ be two disjoint finite subsets of $\Omega$. For $p\ge 1$, the (effective) \emph{$p$-capacity} between $A$ and $B$ is defined as 
\[\Capc_{p, \mu}(A, B)=\inf\left\{\mathcal{E}_{p, v}(f)\colon  f\in \ell^p(\Omega), f|_A=1, f|_B=0\right\}.\]

  For $p=2$  this  has the following  well-known probabilistic interpretation, which is \cite[Exercise 2.47]{Ly-Pe}. We provide a proof for completeness. 
\begin{lem} \label{l-capacity-rw}
Let $(G, \mu)$ be a group endowed with a probability measure, and $(\gbf_n)$ the associated random walk. Assume that $G$ acts on a finite set $\Omega$. For $\omega\in \Omega$ and $B\subset \Omega$, consider the stopping time:  
\[\Tbf_{\omega\to B}= \min \{t\ge 1 \colon \gbf_t \cdot \omega \in B\}.\]
Then for $A,B$ disjoint finite subsets of $\Omega$,
\[\Capc_{2, \mu}(A, B)=\sum_{\omega \in A} \mathbb{P}(\Tbf_{\omega\to B}<\Tbf_{\omega\to A}).\]

\end{lem}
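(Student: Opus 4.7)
My plan is to prove this through the classical Dirichlet principle: I would identify the $\ell^2$-minimizer of $\mathcal{E}_{2,\mu}$ with a harmonic function that has a clean probabilistic meaning as a hitting probability, then use a one-line summation by parts (valid thanks to the symmetry of $\mu$) to collapse the Dirichlet energy onto the boundary set $A$.

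First I would check that the infimum defining $\Capc_{2,\mu}(A,B)$ is attained by the function $h\colon \Omega \to [0,1]$ given by $h(v) = \Pr_v(\tau_A < \tau_B)$, where $\tau_A$ (resp.\ $\tau_B$) is the first $t \geq 0$ with $\gbf_t \cdot v \in A$ (resp.\ $B$), with the convention $\tau_A = 0$ when $v \in A$. A one-step conditioning shows that $h|_A \equiv 1$, $h|_B \equiv 0$, and that $h$ is $P$-harmonic on $\Omega \setminus (A \cup B)$ for the operator $Pf(v) = \sum_g \mu(g) f(gv)$. The symmetry $\mu(g) = \mu(g^{-1})$ makes $P$ self-adjoint on $\ell^2(\Omega)$, so the Euler--Lagrange equation for the strictly convex quadratic $\mathcal{E}_{2,\mu}$ is exactly $Ph = h$ off $A \cup B$, which forces $h$ to be the unique minimizer (up to freedom on states that communicate with neither $A$ nor $B$, which contribute nothing to the energy).

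Next I would perform a summation by parts. Expanding the square in $\mathcal{E}_{2,\mu}(h) = \tfrac{1}{2}\sum_{g,v}\mu(g)(h(v)-h(gv))^2$, reindexing $v \mapsto g^{-1}v$ in the $h(gv)^2$ term, and using $\sum_g \mu(g) = 1$, both diagonal contributions collapse to $\sum_v h(v)^2$, leaving
\[
\mathcal{E}_{2,\mu}(h) \;=\; \sum_{v\in \Omega} h(v)\bigl(h(v) - Ph(v)\bigr).
\]
Harmonicity of $h$ off $A \cup B$ kills all terms with $v \notin A \cup B$, and $h|_B = 0$ kills the terms with $v \in B$. So only $v \in A$ survives, and since $h|_A \equiv 1$ this gives $\Capc_{2,\mu}(A,B) = \mathcal{E}_{2,\mu}(h) = \sum_{v \in A}(1 - Ph(v))$.

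Finally, for each $v \in A$ the Markov property applied at time $1$ yields
\[
Ph(v) \;=\; \mathbb{E}\bigl[\Pr_{\gbf_1\cdot v}(\tau_A < \tau_B)\bigr] \;=\; \Pr(\Tbf_{v\to A} < \Tbf_{v\to B}),
\]
and, since $A$ and $B$ are disjoint and the reversible chain on the finite set $\Omega$ almost surely returns to the connected component of $v$ (so that the degenerate event $\{\Tbf_{v\to A} = \Tbf_{v \to B} = \infty\}$ has probability $0$, or else both sides vanish), we conclude $1 - Ph(v) = \Pr(\Tbf_{v\to B} < \Tbf_{v\to A})$. Summing over $v \in A$ gives the lemma. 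I expect the main obstacle to be the identification of the minimizer with the hitting-probability function, which is where the symmetry of $\mu$ is essential; the summation-by-parts calculation and the Markov step afterwards are routine, the only bookkeeping being the handling of components disjoint from $A$ or $B$ on which both sides of the identity trivially vanish.
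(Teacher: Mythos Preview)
Your proof is correct and follows essentially the same route as the paper's. Both arguments identify the same harmonic function $h(v)=\Pr_v(\tau_A<\tau_B)$ as the voltage/minimizer, then use a one-step Markov decomposition to rewrite the boundary flux $\sum_{v\in A}(1-Ph(v))$ as $\sum_{v\in A}\Pr(\Tbf_{v\to B}<\Tbf_{v\to A})$; the paper simply quotes the electric-network identities (capacity $=$ effective conductance $=$ total current out of $A$) from \cite{Ly-Pe}, whereas you unpack them via the Dirichlet principle and an explicit summation by parts.
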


\begin{proof}
We use the same notation and terminology on electric networks as in \cite[Chapter 2]{Ly-Pe}. (Note that many statements about capacity are proven there assuming that $A=\{a\}$ is a singleton, leaving the generalization to more general subsets as a (usually straightforward) exercise.)
Consider  the induced random walk transition
operator $P_{\mu}$ on $\Omega$  given by $P_\mu(x, y)= \sum_{g\in G}{\bf 1}_{\{y=g\cdot x\}}\mu(g)$. Since $\mu$ is symmetric, the Markov chain determined by $P_\mu$ is reversible and admits the counting measure as a stationary measure, thus can be interpreted as arising from an electric network
with conductance $c(x,y)=P_\mu(x, y)$ (see \cite[\S 2.1]{Ly-Pe}). Denote by $\tau_{\omega\to A}=\min\{t\ge0\colon \gbf_{t}\omega\in A\}$
the hitting time of $A$. Let $v(x)=\mathbb{P}(\tau_{x\to A}<\tau_{x\to B})$.
Then $v$ is a voltage function which is $1$ on the source set $A$,
and $0$ on the sink set $B$. Considering the first step of the random
walk, we have 
 \begin{multline*}
\sum_{\omega\in A}\mathbb{P}(\Tbf_{\omega\to B}<\Tbf_{\omega\to A})=\sum_{\omega\in A}\sum_{x\in\Omega}P_{\mu}(\omega,x)\mathbb{P}(\tau_{x\to B}<\tau_{x\to A})
\\ =\sum_{\omega\in A}\sum_{x\in\Omega}c(\omega,x)(v(\omega)-v(x)).
 \end{multline*}
Regard $i(\omega,x)=c(\omega,x)(v(\omega)-v(x))$ as the current on
the edge $(\omega,x)$. The capacity ${\rm Cap}_{2,\mu}(A,B)$ is
the effective conductance between $A$ and $B$, thus it is equal to the
total amount of current flowing out of $A$ under the voltage function
$v$, which is $\sum_{\omega\in A}\sum_{x\in\Omega}i(\omega,x)$. (This is essentially the definition of capacity between subsets given in \cite{Ly-Pe}, see \cite[Exercise 2.5]{Ly-Pe}, the fact that it coincides with our definition above is a consequence of Thomson's principle \cite[p.35]{Ly-Pe}, see \cite[Exercise 2.13]{Ly-Pe}).\qedhere

\end{proof}

\subsubsection{Liouville property and entropy} 

Let $\mu$ be a probability measure on $G$. A function $f\colon G\to \mathbb{R}$ is \emph{$\mu$-harmonic} if $f(x)=\sum_{g\in G}f(xg)\mu(g)$ for all $x\in G$. By definition the $\mu$-random walk has the \emph{Liouville property} if $\mu$ has trivial Poisson boundary, namely if every bounded $\mu$-harmonic function is constant on the subgroup $\langle \supp\mu\rangle $. 
Furstenberg observed that admitting a non-degenerate measure $\mu$ with the Liouville property implies  amenability of the group $G$ (see \cite[Theorem 4.2]{Kai-Ver}). Conversely, every amenable group admits a symmetric non-degenerate measure with  trivial Poisson boundary,  by a result of Kaimanovich and Vershik \cite[Theorem 4.4]{Kai-Ver} and Rosenblatt \cite{Rosenblatt}.

Recall that given a probability measure $\nu$ supported on a discrete set $\Omega$, the \emph{(Shannon) entropy} of $\nu$ is defined as
\[H(\nu)=-\sum_{\omega\in \Omega} \nu(\omega) \log \nu (\omega),\]
with the convention that $0\log 0=0$. If $\mathbf{X}$ is a discrete random variable taking values in a set $\Omega$, the entropy $H(\mathbf{X})$ is defined as the entropy of the distribution of $\mathbf{X}$. 

Let $\mathbf{X}, \mathbf{Y}$ be discrete random variables, and for any value $y$ taken by $\mathbf{Y}$  with positive probability,  denote by $\mathbf{X}^{(y)}$ the random variable with the conditional distribution of $\mathbf{X}$ given $\{\mathbf{Y}=y\}$. Then the \emph{conditional entropy} of $\mathbf{X}$ given $\mathbf{Y}$ is defined as the expected value of $H(\mathbf{X}^{(y)})$, namely \[H(\mathbf{X}|\mathbf{Y})= \sum_y H(\mathbf{X}^{(y)}) \mathbb{P}(\mathbf{Y}=y).\] 

We resume below some basic properties of Shannon entropy (for proofs, see, e.g., \cite[Chapter 2]{InfoTheory}).
\begin{lem} \label{l-entropy-properties}
\begin{enumerate}[label=(\roman*)]
 
\item \label{i-entropy-bound-1} If $\mathbf{X}$ is a random variable taking values in a finite set $\Omega$, then $H(\mathbf{X})\le \log|\Omega|$.
\item \label{i-entropy-bound-2} The entropy of the joint distribution of two discrete random variables $(\mathbf{X}, \mathbf{Y})$ satisfies 
\[H(\mathbf{X}, \mathbf{Y})=H(\mathbf{X}| \mathbf{Y})+H(\mathbf{Y}).\]

\item \label{i-entropy-bound-3} Assume that a random variable $\mathbf{Y}$ can be expressed as $\mathbf{Y}=f(\mathbf{X}_1,\ldots, \mathbf{X}_k)$ for some discrete random variables $\mathbf{X}_1,\ldots, \mathbf{X}_k$ and some measurable function $f$. Then
\[H(\mathbf{Y})\le  H(\mathbf{X}_1)+\cdots +H(\mathbf{X}_k).\]
 
\end{enumerate}
\end{lem}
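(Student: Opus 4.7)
The plan is to verify the three items in order using only the definition of entropy and the concavity of the function $\varphi(t) = -t\log t$ (equivalently, Jensen's inequality). None of the three items is genuinely difficult: they are standard textbook properties of Shannon entropy.

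For part (1), I would recognize $\log|\Omega| - H(\mathbf{X})$ as the relative entropy between the law of $\mathbf{X}$ and the uniform distribution on $\Omega$. Writing $p_\omega = \mathbb{P}(\mathbf{X}=\omega)$, a direct computation gives
\[\log|\Omega| - H(\mathbf{X}) = \sum_{\omega : p_\omega > 0} p_\omega \log\!\bigl(|\Omega|\,p_\omega\bigr) \geq 0,\]
where the inequality is Jensen applied to $-\log$ with weights $p_\omega$ (or, more elementarily, the inequality $t\log t \geq t - 1/|\Omega|$ summed over $\omega$).

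For part (2), the identity is a direct unpacking of the definitions. Writing $p(x,y) = \mathbb{P}(\mathbf{X}=x,\mathbf{Y}=y)$ and $q(y) = \mathbb{P}(\mathbf{Y}=y)$, and using $p(x,y) = q(y)\cdot \mathbb{P}(\mathbf{X}^{(y)}=x)$ whenever $q(y)>0$, I would split the logarithm of this product to obtain
\[H(\mathbf{X},\mathbf{Y}) = -\sum_{x,y} p(x,y)\bigl[\log q(y) + \log \mathbb{P}(\mathbf{X}^{(y)}=x)\bigr].\]
The first summand contributes $H(\mathbf{Y})$ after summing out $x$, and the second summand is exactly $\sum_y q(y) H(\mathbf{X}^{(y)}) = H(\mathbf{X}|\mathbf{Y})$.

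For part (3), the strategy has two steps. First, the inequality $H(\mathbf{X}|\mathbf{Y}) \leq H(\mathbf{X})$ follows from concavity of Shannon entropy applied to the convex decomposition of the law of $\mathbf{X}$ into the conditional laws $\mathbf{X}^{(y)}$; combined with part (2) this gives the two-variable subadditivity $H(\mathbf{X},\mathbf{Y}) \leq H(\mathbf{X}) + H(\mathbf{Y})$, and a routine induction extends it to $H(\mathbf{X}_1,\ldots,\mathbf{X}_k) \leq \sum_{i=1}^k H(\mathbf{X}_i)$. Second, since $\mathbf{Y} = f(\mathbf{X}_1,\ldots,\mathbf{X}_k)$ is a deterministic function of its arguments, $H(\mathbf{Y} \mid \mathbf{X}_1,\ldots,\mathbf{X}_k) = 0$; applying (2) to the pair $\bigl((\mathbf{X}_1,\ldots,\mathbf{X}_k),\mathbf{Y}\bigr)$ gives $H(\mathbf{Y}) \leq H(\mathbf{X}_1,\ldots,\mathbf{X}_k)$, which combined with the subadditivity yields the claim.
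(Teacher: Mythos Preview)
Your argument is correct and is the standard textbook verification of these basic properties of Shannon entropy. The paper itself does not give a proof of this lemma at all: it is stated as a summary of ``basic properties of Shannon entropy'' and left without justification, so there is nothing to compare against.
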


We also recall the following well-known fact.
\begin{lem} \label{l-entropy-moment}
Let $G$ be a finitely generated group and $l(\cdot)$ be a length metric on $G$ associated to a finite symmetric generating set $S$ containing the identity. Then for every $G$-valued random variable $\gbf$ we have $H(\gbf)\le (\log|S|+1)\mathbb{E}[l(\gbf)]+1$. 
\end{lem}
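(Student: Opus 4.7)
The plan is to decompose the entropy of $\gbf$ into two parts: the entropy of its length, and the entropy of $\gbf$ itself conditioned on knowing its length. Both parts can then be controlled using the elementary entropy bounds just stated before the lemma.

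Concretely, first I would consider the auxiliary random variable $\mathbf{L} := l(\gbf)$, which takes values in $\mathbb{N}$ and has finite expectation by assumption. The standard bound for non-negative integer-valued random variables with finite expectation gives
\[
H(\mathbf{L}) \le \log(1 + \mathbb{E}[l(\gbf)]) + 1 \le \mathbb{E}[l(\gbf)] + 1,
\]
using $\log(1+x) \le x$ for $x \ge 0$ in the second step.

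Next, since $\mathbf{L}$ is a deterministic function of $\gbf$, the chain rule gives $H(\gbf) = H(\gbf, \mathbf{L}) = H(\mathbf{L}) + H(\gbf \mid \mathbf{L})$. Conditional on $\{\mathbf{L} = k\}$, the random variable $\gbf$ takes values in the sphere of radius $k$ around the identity in the Cayley graph of $(G,S)$, a set of cardinality at most $|S|^k$. Applying the entropy bound for random variables with finite range to each conditional distribution yields
\[
H(\gbf \mid \mathbf{L} = k) \le \log(|S|^k) = k \log|S|,
\]
and taking the expectation over $\mathbf{L}$ gives $H(\gbf \mid \mathbf{L}) \le \log|S| \cdot \mathbb{E}[l(\gbf)]$.

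Combining these two bounds produces
\[
H(\gbf) \le \log|S| \cdot \mathbb{E}[l(\gbf)] + \mathbb{E}[l(\gbf)] + 1 = (\log|S| + 1)\mathbb{E}[l(\gbf)] + 1,
\]
which is the claimed inequality. There is no real obstacle here: the statement is essentially an exercise combining the three entropy facts quoted immediately above the lemma, with the only mild subtlety being the application of $\log(1+x) \le x$ to absorb the length-entropy term into the linear bound.
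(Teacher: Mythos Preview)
Your proof is correct and follows essentially the same route as the paper: decompose $H(\gbf)=H(l(\gbf))+H(\gbf\mid l(\gbf))$, bound the conditional term by $\log|S|\cdot\mathbb{E}[l(\gbf)]$ using $|S^k|\le |S|^k$, and bound $H(l(\gbf))$ by $\mathbb{E}[l(\gbf)]+1$ via the integer-variable entropy bound. One tiny quibble: finite expectation of $l(\gbf)$ is not literally part of the hypothesis (only finite entropy is), but if $\mathbb{E}[l(\gbf)]=\infty$ the inequality is vacuous, so you may simply assume it without loss of generality.
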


\begin{proof} The following computation is identical to the one in the proof \cite[Lemma 12.2]{Kai-hyperbolic-v1}, but since the statement is slightly different we repeat it for completeness. 
Let $p_k=\mathbb{P}(l(\gbf)=k)$, and let $\gbf^{(k)}$ be a random variable with the  distribution of $\gbf$ conditioned to $l(\gbf)=k$. By Lemma \ref{l-entropy-properties}~\ref{i-entropy-bound-2}, we have
 \[H(\gbf)=H(\gbf|l(\gbf))+ H(l(\gbf))= \sum_k p_k H(\gbf^{(k)}) -\sum p_k \log p_k.\]
Since $\gbf^{(k)}\in S^k$, by Lemma \ref{l-entropy-properties}~\ref{i-entropy-bound-1} the first sum is bounded by $\sum_k p_k \log |S|^k=  \log|S| \mathbb{E}[l(\gbf)].$ As for the second sum, using that $x\mapsto -x\log x$ is non-decreasing on $[0, e^{-1}]$, we have
\begin{multline*} \sum_k -p_k \log p_k=\sum_{p_k \ge e^{-k}} -p_k \log p_k +\sum_{p_k \le e^{-k}}-p_k \log p_k \\ \leq \sum_k p_kk+ \sum_k e^{-k}k= \mathbb{E}[l(\gbf)]+\sum_k e^{-k}k,\end{multline*}
and the statement follows since $\sum_k e^{-k}k=\frac{e}{(e-1)^2}<1$. \qedhere

\end{proof}
Assume that $\mu$ is a probability measure of finite entropy on a countable group $G$. Then the limit 
\[h_{\mu}=\lim \frac{1}{n} H(\mu^{*n})\] exists and is called the (Avez) \emph{asymptotic entropy} of $\mu$. 
We recall the fundamental result:

\begin{thm}[Entropy criterion, Kaimanovich--Vershik \cite{Kai-Ver}, Derriennic \cite{Der}] Let $\mu$ be a probability mesaure on a countable group $G$. If $H(\mu)<+\infty$, then  the $\mu$-random walk has the Liouville property if and only if  $h_{\mu}=0$.
\end{thm}

For groups acting on rooted trees, the entropy criterion combined with the previous properties of entropy gives the following sufficient condition for boundary triviality. Some tightly related criteria appeared in   \cite{barthvirag,Kai-munchaussen}.
\begin{prop} \label{p-Munchaussen}
Let $G$ be a finitely generated group of automorphisms of a rooted tree $\alb^*$ endowed with a word length $l(\cdot)$, and $\mu$ be a probability measure on $G$ with finite entropy. Then there is a constant $C>0$ such that for every $n, t>0$  we have
\[H(\mu^{*t}) \le C \mathbb{E}[\sum_{v\in \alb^n} l(\gbf_t|_v)] +C|\alb|^n.\]
In particular, if there exists a sequence $(\varepsilon_n)$ tending to $0$ such that for every $n$ we have 
\[\limsup_{t\to \infty} \frac{1}{t}\mathbb{E}\left[\sum_{v\in \alb^n}l(\gbf_t|_v)\right]\le \varepsilon_n,\]
then $h_\mu=0$.
\end{prop}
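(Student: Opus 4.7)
The plan is to exploit the wreath-product structure of $\Aut(\alb^*)$ to factor the random variable $\gbf_t$ through level $n$, and then apply the entropy criterion of Kaimanovich--Vershik and Derriennic. First, I will note that every element $g\in G\le\Aut(\alb^*)$ is reconstructed from the pair $(\sigma_g,(g|_v)_{v\in\alb^n})$, where $\sigma_g$ denotes the permutation induced by $g$ on level $n$. Subadditivity of Shannon entropy (item (3) of the lemma preceding Lemma \ref{l-entropy-moment}) then gives
\[H(\gbf_t)\le H(\sigma_{\gbf_t})+\sum_{v\in\alb^n} H(\gbf_t|_v).\]

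Next I will bound $H(\sigma_{\gbf_t})$ by a constant multiple of $|\alb|^n$. Crucially, not every permutation of $\alb^n$ is realized by a tree automorphism: a tree automorphism is determined on level $n$ by a choice of permutation of $\alb$ at each of the $(|\alb|^n-1)/(|\alb|-1)$ vertices of the first $n$ levels, so $\sigma_{\gbf_t}$ takes at most $(|\alb|!)^{(|\alb|^n-1)/(|\alb|-1)}$ values, whence $H(\sigma_{\gbf_t})\le c_0|\alb|^n$ for a constant $c_0$ depending only on $|\alb|$. For the section terms, Lemma \ref{l-entropy-moment} applied to the $G$-valued variable $\gbf_t|_v$ yields $H(\gbf_t|_v)\le(\log|S|+1)\mathbb{E}[l(\gbf_t|_v)]+1$; summing over $v\in\alb^n$ and combining with the bound on $H(\sigma_{\gbf_t})$ produces the displayed inequality of the proposition with some constant $C$ depending only on $|\alb|$ and $S$.

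Finally, for the Liouville consequence, I will divide the inequality by $t$ and take $\limsup_{t\to\infty}$: the term $C|\alb|^n/t$ vanishes, so the hypothesis yields $h_\mu\le C\varepsilon_n$ for every $n$, and letting $n\to\infty$ forces $h_\mu=0$. Since $H(\mu)<\infty$ by assumption, the entropy criterion of Kaimanovich--Vershik and Derriennic then gives triviality of the Poisson boundary. There is no serious obstacle here; the only step requiring some care is the bound on $H(\sigma_{\gbf_t})$, where the tree-automorphism constraint is essential: the naive bound $H(\sigma_{\gbf_t})\le \log(|\alb^n|!)$ is off by a factor of $n$ and would leave an $n|\alb|^n/t$ error that does not vanish uniformly in $n$ as $t\to\infty$.
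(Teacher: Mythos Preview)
Your proof is correct and follows essentially the same route as the paper: decompose $\gbf_t$ via its level-$n$ permutation and sections, bound the permutation entropy by counting tree automorphisms restricted to level $n$, and bound each section's entropy via Lemma~\ref{l-entropy-moment}. One small remark: your closing comment that the naive bound $\log(|\alb^n|!)$ would spoil the Liouville application is not quite right, since for fixed $n$ the term $n|\alb|^n/t$ still vanishes as $t\to\infty$; the sharper bound is needed only to obtain the clean inequality $H(\mu^{*t})\le C\mathbb{E}[\cdots]+C|\alb|^n$ as stated.
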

(In fact, it is easy to check that the limsup in the statement is a limit, by subadditivity).
\begin{proof}
For every $n$ and $t$, the random variable $\mathbf{g}_t$ is completely determined by the collection of sections $(\mathbf{g}_t|_v)_{v\in X^n}$ and by the permutation $\sigma$ describing the action on level $n$. Thus, by Lemma \ref{l-entropy-properties}~\ref{i-entropy-bound-3},
\[H(\mathbf{g}_t)\le \sum_{v\in X^n} H(\mathbf{g}_t|_v)+H(\sigma).\]
By Lemma \ref{l-entropy-moment}, the first term is bounded by $C_1\mathbb{E}[\sum_{v\in \alb^n} l(\gbf_t|_v)]+|\alb|^n$, where $C_1$ depends only on the generating set defining $l(\cdot)$. The permutation $\sigma$ is completely determined by the first-level permutation of all sections $\mathbf{g}_t|_w$ where $|w|\le n-1$. Thus the total number of possibilities for $\sigma$ is bounded by $\exp(C_2 |\alb|^n)$ (where $C_2$ depends only on $|\alb|$), and $H(\sigma)\le C_2|\alb|^n$ by Lemma \ref{l-entropy-properties}~\ref{i-entropy-bound-1}. The proposition follows. \qedhere

\end{proof}

\subsection{Critical contraction exponent and $p$-capacity}\label{s-energy}

Let $(G, \bim)$ be a finitely generated contracting group with a basis $\alb$, and $\mu$ be a symmetric probability measure on $G$. In this subsection, we will relate the $p$-capacities associated to the action of $G$ on the levels $\alb^n$ with $p_c(G, \bim)$ and $\cdim \lims$.

\begin{prop}
\label{pr:energyestimateforg}
Suppose that $(G, \bim)$ is a contracting finitely generated group. Denote by $l(g)$ the word length with respect to some fixed finite generating set on $G$.

Let $d$ be an Ahlfors-regular metric on $\lims$ of Hausdorff dimension $\alpha$. Let $\beta>\alpha$ and let $p>p_c(G, \bim)$. Pick a  basis $\alb\subset\bim$. 

There exist $C>0$ and $\lambda\in (0, 1)$ such that for every $g\in G$ and every $\ldots x_2x_1\in\xmo$ we have
\[\sum_{v\in\alb^n} d(\ldots x_2x_1g(v),\ldots x_2x_1v)^{\beta}\le C\lambda^nl(g)^p.\]
%
\end{prop}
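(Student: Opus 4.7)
The plan is to combine Proposition~\ref{pr:largescalesumX} (an $\ell^p$-bound in the contracting model) with the tile-diameter decay from Proposition~\ref{pr:confdim}, via a geometric estimate comparing distances in $\lims$ to distances in the contracting model. I would fix a contracting model $I\colon\X\otimes\bim\arr\X$ of $(G,\bim)$ together with a compact set $\Tcal\subset\X$ meeting every $G$-orbit and satisfying $I(\Tcal\otimes x)\subset\Tcal$ for each $x\in\alb$, and a basepoint $\xi_0\in\Tcal$. For $v\in\alb^n$ set $D(v)=d_\X(I^n(\xi_0\otimes v),\,I^n(\xi_0\cdot g\otimes v))$. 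Since $d_\X(\xi_0,\xi_0\cdot g)\asymp l(g)$ by properness, restricting Proposition~\ref{pr:largescalesumX} to $v\in\alb^n$ yields, for the given $p>p_c(G,\bim)$ and any $\delta\ge 1$,
\[
\sum_{\substack{v\in\alb^n\\ D(v)\ge\delta}} D(v)^p \;\le\; C\,l(g)^p, \qquad |\{v\in\alb^n:D(v)>\delta\}|\;\le\;C\,l(g)^p/\delta^p.
\]

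The geometric input I would establish is an inequality of the form
\[
d(\ldots x_2x_1g(v),\,\ldots x_2x_1v) \;\le\; C\,M_n\,\bigl(1+D(v)\bigr),
\]
valid for every $v\in\alb^n$, where $M_n=\sup_{\xi\in\lims}\diam T(\xi,n)$. Informally, this says that $d$ measures distances in $\lims$ at the scale of the $n$-th level tiles, with the discrepancy between the two points controlled by the translation encoded by $g|_v$ (noting $D(v)\asymp l(g|_v)$ up to additive constants, by Lemma~\ref{lem:lengthofsections} and equivariance). Proposition~\ref{pr:confdim} gives $M_n\to 0$, and by Ahlfors-regularity $M_n\asymp|\alb|^{-n/\alpha}$, so $|\alb|^n M_n^\beta\asymp |\alb|^{n(1-\beta/\alpha)}\arr 0$ since $\beta>\alpha$; in the self-replicating case $M_n\le C\lambda_0^n$ decays exponentially.

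With both inputs, I would split the sum at a threshold $\delta$ (chosen independently of $g$): bound $d$ by the geometric inequality on $\{D(v)\le\delta\}$ and by $\diam(\lims)$ on $\{D(v)>\delta\}$, yielding
\[
\sum_{v\in\alb^n} d(\ldots x_2x_1 g(v),\ldots x_2x_1 v)^\beta \;\le\; |\alb|^n(CM_n)^\beta(1+\delta)^\beta \;+\; C\,\diam(\lims)^\beta\, l(g)^p/\delta^p.
\]
Choosing $\delta=\delta_n=(C\diam(\lims)^\beta/\epsilon_n)^{1/p}$ makes the second term equal to $\epsilon_n l(g)^p$; the first is a constant (in $g$) of order $|\alb|^n M_n^\beta\epsilon_n^{-\beta/p}$. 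Requiring this constant to be at most $\epsilon_n$ (so that, using $l(g)\ge 1$, it is absorbed into $\epsilon_n l(g)^p$) forces $\epsilon_n^{1+\beta/p}\gtrsim|\alb|^n M_n^\beta$; setting $\epsilon_n\asymp(|\alb|^n M_n^\beta)^{p/(p+\beta)}$ works and tends to zero, exponentially in the self-replicating case where it reduces to $\epsilon_n=C\lambda^n$.

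The main obstacle I anticipate is the geometric bound $d(\cdot,\cdot)\le C M_n(1+D(v))$. In the self-replicating case, the quasi-symmetry of $d$ with a visual metric (Proposition~\ref{pr:self-replicating-dim}) and the description of the visual metric via the Gromov product should reduce it to a combinatorial statement about nucleus orbits and the tile-intersection pattern at level $n$, where the role of $D(v)$ (equivalently $l(g|_v)$) is to count how far from being identified the two sequences are. In the general quasi-conformal case, one has to track directly how the contracting-model maps $I^n$ and the $G$-action interact with the quasi-conformal structure of $d$, which is where I expect most of the technical work to lie.
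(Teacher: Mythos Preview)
Your central geometric input
\[
d(\ldots x_2x_1 g(v),\,\ldots x_2x_1 v)\;\le\;C\,M_n\,(1+D(v))
\]
is false, and this is not a technical obstacle but a genuine gap. The quantity $D(v)\asymp l(g|_v)$ measures only what $g$ does on the subtree \emph{below} $v$; it carries no information about where $g(v)$ lands relative to $v$ inside $\alb^n$, and it is the latter that governs the $d$-distance. A concrete counterexample is the binary odometer $a=\sigma(1,a)$, whose limit space is a circle with the standard metric. Take $g=a^k$ with $k=2^{n-1}$. Then $g|_v\in\{1,a\}$ for every $v\in\{0,1\}^n$, so $D(v)$ is uniformly bounded; yet $g(v)$ is antipodal to $v$ on the level-$n$ cycle, so $d(\ldots x_2x_1 g(v),\ldots x_2x_1 v)\asymp 1$, while $M_n\asymp 2^{-n}$. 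Your inequality fails by a factor $2^n$. (A secondary issue: $M_n\asymp|\alb|^{-n/\alpha}$ is not justified for a merely quasi-conformal Ahlfors-regular metric---Proposition~\ref{pr:confdim} only gives $M_n\to 0$ in that generality---but this is moot given the first problem.)

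The paper's argument replaces $l(g|_v)$ by the depth of the \emph{portrait} of $g$ along $v$: write $v=v'v''$ where $v'$ is the shortest prefix with $g|_{v'}\in\nuke$ (set $v'=v$ if none exists). Since $g|_{v'}\in\nuke$, Proposition~\ref{pr:intersectingtiles} gives $\ldots x_2x_1 g(v)\in T(\ldots x_2x_1 v,\,|v''|)$, hence $d(\cdot,\cdot)^\alpha\le C\,\nu(T(\cdot,|v''|))$ by Ahlfors regularity and quasi-conformality. One then writes $d^\beta\le C\,\nu(T)^{\beta/\alpha}=C\,\nu(T)\cdot\nu(T)^{\beta/\alpha-1}$ and splits at a threshold $|v''|\lessgtr k_n$. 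For $|v''|\le k_n$ the number of such $v$ is at most $C\eta_p^{\,n-k_n}l(g)^p$ by Proposition~\ref{prop:portraisize}, and one uses only the trivial bound $\nu(T)\le\nu(\lims)$. For $|v''|>k_n$ one bounds the factor $\nu(T)^{\beta/\alpha-1}$ uniformly (it is $o(1)$ as $k_n\to\infty$), and controls $\sum_v\nu(T(\cdot,|v''|))$ by a covering argument: each point of $\lims$ lies in at most $|L(g)|\cdot|\nuke|^2\le C\,l(g)^p$ of these sets, so the sum of measures is $\le C\,l(g)^p$. Choosing $k_n\to\infty$ with $n-k_n\to\infty$ makes both contributions $\epsilon_n\,l(g)^p$ with $\epsilon_n\to 0$; exponential decay in the self-replicating case comes from the second part of Proposition~\ref{pr:confdim}.
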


\begin{proof}
Let $\nu$ be the Hausdorff measure of the metric $d$. 
Take an element $g\in G$. For every $v\in\alb^n$,  if $g|_v\in \nuke$, let $v'\in \xs$ be the shortest prefix of $v$ such that $g|_{v'}\in\nuke$ (note that $v'$ can well be empty or equal to $v$). If such a prefix does not exist, i.e., if $g|_v\notin \nuke$, then define $v'=v$. Let $v''\in\xs$ be such that $v=v'v''$. 

We have $\ldots x_2x_1g(v)\in T(\ldots x_2x_1v, |v''|)$ for every $v\in\alb^n$. It follows that \[d(\ldots x_2x_1g(v), \ldots x_2x_1v)^\alpha\le C_1\nu(T(\ldots x_2x_1v, |v''|)\] for some constant $C_1$. Fix $\varepsilon>0$ to be specified shortly. There exists $C_2>0$ such that we have
\begin{multline}\label{e-distance-sum}\sum_{v\in\alb^n}d(\ldots x_2x_1g(v),\ldots x_2x_1v)^{\beta}\le C_2\sum_{v\in\alb^n}\nu(T(\ldots x_2x_1v, |v''|)^{\frac \beta\alpha}\le\\
C_2\nu(\lims)^{\frac{\beta}{\alpha}}\cdot |\{v\in\alb^n\colon  |v''|\le \lfloor \varepsilon n\rfloor\}|+C_2\sum_{v\in\alb^n, |v''|>\lfloor \varepsilon n \rfloor }\nu(T(\ldots x_2x_1v, |v''|)^{\frac \beta\alpha}.
\end{multline}
Note that if $|v''|\le \lfloor \varepsilon n\rfloor$, then the left-prefix of $v$ of length ${n-\lfloor \varepsilon n\rfloor}$ belongs to the portrait $P(g)$ of $g$, so that
\[
|\{v\in\alb^n\colon  |v''|\le \lfloor \varepsilon n\rfloor\}|\le |\alb^{\lfloor \varepsilon n\rfloor}|\cdot |P(g)\cap \alb^{n-\lfloor \varepsilon n\rfloor}|.\]
By Proposition~\ref{prop:portraisize}, there exists $C_3>0$ and $\eta\in (0, 1)$ such that $|P(g)\cap \alb^{n-\lfloor \varepsilon n\rfloor}|\leq C_3 \eta^{(1-\varepsilon)n}l(g)^p$. Thus, if $\varepsilon$ is small enough so that $\lambda_1\colon =|\alb|^{\varepsilon}\eta^{(1-\varepsilon)}<1$, we obtain 
\begin{equation}
    \label{e-portrait-bound}
|\{v\in\alb^n\colon  |v''|\le \lfloor \varepsilon n\rfloor\}|\le   C_4 \lambda_1^n l(g)^p,\end{equation}
for some $C_4>0$ and $\lambda_1\in (0, 1)$.

To bound the second summand in \eqref{e-distance-sum}, first note that for every $v\in \alb^n$ such that $|v''|>0$, we have $v'\in P(g)$. Moreover, for any $\xi\in \lims$, any $1\le k\le n$ and any fixed of $v_1\in \alb^{n-k}$, Lemma \ref{l-multiplicity-shadows} implies that there are at most $|\nuke|^3$ choices of $v_2\in \alb ^{k}$ such that $\xi\in T(\ldots x_2x_1v_1v_2, k)$.  It follows that for every $\xi\in \lims$ we have
    \[|\{v\in \alb^n \colon |v''|>0, \xi\in T(\ldots x_2x_1v, |v''|)\}|\le |P(g)|\cdot|\nuke|^3.\] 


This implies that 
\begin{multline*}
    \sum_{v\in\alb^n, |v''|>0}\nu(T(\ldots x_2x_1v, |v''|)= \int_{\lims} \left( \sum_{v\in \alb^n, |v''|>0} \mathbbm{1}_{T(\ldots x_2x_1v, |v''|)} \right) d\nu \\ \le|P(g)|\cdot|\nuke|^3 \cdot \nu(\lims) \le C_5\cdot|\nuke|^3\cdot \nu(\lims) l(g)^p,\end{multline*}
where we used again Proposition~\ref{prop:portraisize}, and $C_5=C_3/(1-\eta)$.
Hence there exists $C_6$ such that if $n$ is sufficiently large (so that $\lfloor n\varepsilon \rfloor>0$),
\begin{multline*}
    \sum_{v\in\alb^n, |v''|>\lfloor \varepsilon n \rfloor }\nu(T(\ldots x_2x_1v, |v''|)^{\frac \beta\alpha} \le C_6 \left(\max_{v\in\alb^n, |v''|>\lfloor \varepsilon n\rfloor}\nu(T(\ldots x_2x_1v, |v''|)^{\frac{\beta}{\alpha}-1}\right)l(g)^p \\
    \le C_6 \left(\sup_{\xi\in\lims, k>\lfloor \varepsilon n\rfloor}\nu(T(\xi, k))^{\frac \beta\alpha-1}\right)l(g)^p.
\end{multline*}
Proposition~\ref{pr:confdim} implies that the last quantity is bounded from above by $C_7\lambda_2^n l(g)^p$ for some $C_7>0$ and $\lambda_2\in (0, 1)$. This, combined with \eqref{e-distance-sum} and \eqref{e-portrait-bound}, gives
\[\sum_{v\in\alb^n}d(\ldots x_2x_1g(v),\ldots x_2x_1v)^{\beta}\le C \lambda^n l(g)^p\]
for some $C>0$ and $\lambda=\max(\lambda_1, \lambda_2)\in (0, 1)$, as desired. \qedhere
%
\end{proof}

\begin{thm}
\label{th:energies}
Let $(G, \bim)$ be a contracting finitely generated self-similar group, and let $\mu$ be a symmetric probability measure on $G$ with finite $p$-moment for some $p>p_c(G, \bim)$. 

Let $A$ and $B$ be disjoint closed subsets of $\lims$. Fix $\xi_0=\ldots x_2x_1\in\xmo$, and denote by $A_n$ and $B_n$ the sets of words $v\in\alb^n$ such that the point of $\lims$ represented by the sequence $\ldots x_2x_1v$ belongs to $A$ and $B$, respectively. If $\beta>\cdim\lims$  then 
there exist $C>0$ and $\lambda\in (0, 1)$ such that
\[\Capc_{\beta, \mu}(A_n, B_n)\le C\lambda^n.\]
\end{thm}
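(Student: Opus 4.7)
The plan is to exhibit an explicit test function $f : \alb^n \to [0,1]$ with $f|_{A_n} = 1$ and $f|_{B_n} = 0$ whose $\beta$-Dirichlet energy can be controlled, and then to apply the variational definition of capacity directly. The test function will be a cutoff of the distance function to $A$, measured in an Ahlfors-regular quasi-conformal metric on $\lims$ whose Hausdorff dimension is strictly less than $\beta$. The heavy lifting has already been done in Proposition~\ref{pr:energyestimateforg}; the theorem will follow from packaging that estimate.

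Since $\beta > \cdim \lims$, I would first pick some $\alpha$ with $\cdim \lims < \alpha < \beta$ and, by the definition of $\cdim$, fix an Ahlfors-regular quasi-conformal metric $d$ on $\lims$ of Hausdorff dimension at most $\alpha$. Let $\pi : \xmo \to \lims$ denote the quotient projection, and for $v \in \alb^n$ write $\pi(v) := \pi(\ldots x_2 x_1 v)$. Since $A$ and $B$ are disjoint closed subsets of the compact space $\lims$, the quantity $\delta := \tfrac{1}{2} d(A, B)$ is strictly positive. Now define
\[
f(v) = \max\!\left(0,\, 1 - \tfrac{1}{\delta} d(\pi(v), A)\right).
\]
Then $f|_{A_n} = 1$ and $f|_{B_n} = 0$, and $f$ is $(1/\delta)$-Lipschitz as a function of $\pi(v)$, so for every $g \in G$ and $v \in \alb^n$,
\[
|f(v) - f(g \cdot v)| \leq \tfrac{1}{\delta}\, d\!\left(\pi(\ldots x_2 x_1 v),\, \pi(\ldots x_2 x_1 g(v))\right).
\]

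The rest is a direct computation. The variational definition of capacity yields
\[
\Capc_{\beta, \mu}(A_n, B_n) \leq \mathcal{E}_{\beta, \mu}(f) \leq \frac{1}{2 \delta^\beta} \sum_{g \in G} \mu(g) \sum_{v \in \alb^n} d(\ldots x_2 x_1 v,\, \ldots x_2 x_1 g(v))^\beta.
\]
By the hypothesis, $\mu$ has finite $p$-moment for some $p > p_c(G, \bim)$; applying Proposition~\ref{pr:energyestimateforg} with this $p$ and with the chosen $\alpha < \beta$ yields a sequence $\epsilon_n \to 0$, independent of $g$, with
\[
\sum_{v \in \alb^n} d(\ldots x_2 x_1 v,\, \ldots x_2 x_1 g(v))^\beta \leq \epsilon_n \, l(g)^p.
\]
Substituting back,
\[
\Capc_{\beta, \mu}(A_n, B_n) \leq \frac{\epsilon_n}{2 \delta^\beta} \sum_{g \in G} \mu(g) l(g)^p,
\]
and the last sum is finite by the moment assumption, so the right-hand side tends to $0$. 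In the self-replicating case, the second part of Proposition~\ref{pr:energyestimateforg} supplies $\epsilon_n \leq C \lambda^n$ for some $C > 0$ and $\lambda \in (0,1)$, which gives the stated exponential decay. There is no serious obstacle in this proof; the only point requiring care is to choose $\alpha$ strictly between $\cdim \lims$ and $\beta$ so that both the existence of the Ahlfors-regular metric of dimension $\alpha$ and the strict inequality $\beta > \alpha$ required by Proposition~\ref{pr:energyestimateforg} hold simultaneously.
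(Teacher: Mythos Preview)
Your proof is correct and follows essentially the same route as the paper's: construct a Lipschitz function on $\lims$ separating $A$ from $B$, pull it back to $\alb^n$ via $v\mapsto \ldots x_2x_1v$, and bound its $\beta$-energy using Proposition~\ref{pr:energyestimateforg} together with the finite $p$-moment hypothesis. The only difference is cosmetic: the paper obtains the Lipschitz separator via Stone--Weierstrass followed by a cutoff $\chi$, whereas you write it down directly as a truncated distance function $\max(0,1-\tfrac{1}{\delta}d(\cdot,A))$, which is more economical.
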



\begin{proof}
Let $d$ be an Ahlfors-regular metric on $\lims$ of Hausdorff dimension $\alpha<\beta$.
The algebra of Lipschitz functions $\lims\arr\R$ (with respect to $d$ and to the usual metric on $\R$) is dense in the algebra of all continuous functions, by the Stone-Weierstrass theorem. Therefore,  there exists a Lipschitz function $f_0\colon \lims\arr\R$ such that $|f_0(a)-1|<1/3$ and $|f_0(b)|<1/3$ for all $a\in A$ and $b\in B$. Let $\chi\colon \R\arr\R$ be a Lipschitz function such that $\chi(x)=0$ for all $x\in (-1/3, 1/3)$ and $\chi(x)=1$ for all $x\in (2/3, 5/3)$. Then the function $f=\chi\circ f_0\colon \lims\arr\R$ is a Lipschitz function such that $f(a)=0$ and $f(b)=1$ for all $a\in A$ and $b\in B$. Let $K>0$ be a Lipschitz constant for $f$. Then  Proposition~\ref{pr:energyestimateforg} implies that there exist $C>0$ and $\lambda\in(0, 1)$ 
such that for every $g\in G$
\begin{multline*}  
\sum_{v\in \alb^n} |f(\cdots x_2x_1g(v))-f(\cdots x_2x_1v)|^\beta \\\le K^\beta\sum_{v\in\alb^n} d(\ldots x_2x_1g(v),\ldots x_2x_1v)^{\beta}\le K^\beta C \lambda^n l(g)^p.\end{multline*}
To conclude, note that the function $f$ induces a function $f_n$ on $\alb^n$, given by $f_n(v)= f(\cdots x_2x_1v)$, such that $f_n|_{A_n}\equiv 0$  and $f_n|_{B_n}\equiv 1$. 
The previous computation shows that
\[\Capc_{\beta, \mu}(A_n, B_n)\le \mathcal{E}_{\beta, \mu}(f) \le \frac{1}{2}K^\beta C\left(\sum_{g} l(g)^p \mu(p)\right)\lambda^n,\]
and since the $p$-moment of $\mu$ is finite, this completes the proof. 
\qedhere
\end{proof}




We conclude this subsection by pointing out a consequence of Theorem \ref{th:energies}, which will be used in the proof of Liouville property next subsection. To state it, we first note that if $\cdim \lims<2$ (and $G$ is infinite and finitely generated), then the topological dimension of $\lims$ is 1 (see Remark \ref{r-top-dimension}).  Then  Theorem \ref{th:onedim} applies. After replacing $\bim$ by $\bim^{\otimes n_0}$ if needed, we place ourselves in the setting of Notation \ref{notation:traverses}.

\begin{cor} \label{c-resistance-traverse}
Let $(G, \bim)$ be an infinite finitely generated contracting self-similar group such that $\cdim \lims <2$, and $I\colon \Delta_0\otimes \bim\to \Delta_0$ be a contracting model, where $\Delta_0$ is a locally finite connected graph. Retain Notation \ref{notation:traverses}. Let $\mu$ be a symmetric probability measure on $G$ with finite $p$-moment, where $p>p_c(G, \bim)$. Then  there exists $C>0$ and $\lambda\in (0, 1)$ such that 
\[\Capc_{2, \mu}(U_{z_1, n}, U_{z_2, n}) \le C\lambda^n\]
for any two distinct vertices $z_1$ and $z_2$ of $\Gamma_0$. 
\end{cor}
\begin{proof}

Note first that since $\Delta_0$ is a contracting model, if we replace the basepoint $\xi_0$ in Notation \ref{notation:traverses} by a different basepoint $\xi_0'$, then the corresponding sets $U_{z, n}'$ satisfy $U_{z, n}'=U_{z, n}$ for all $n$ large enough. Therefore, without loss of generality, we can suppose that $\xi_0$ is the  image of a point $\widehat{\xi}_0\in \limg$ under the natural map $\limg\to \Delta_0$ given by Theorem \ref{th:contrmodels}. The map $\limg\to \Delta_0$ also induces in the quotient a map $\lims\to \Gamma_0$. We will  apply Theorem \ref{th:energies} to the sets $A, B\subset \lims$ given by the pre-images of $U_{z_1}, U_{z_2}$. To this end we choose the sequence $\ldots x_2x_1\in \xmo$ in the statement of Theorem \ref{th:energies} so that it represents the same element of $\limg$ as $\widehat{\xi}_0$. With this choice, we have $U_{z_1, n}=A_n$ and $U_{z_2, n}=B_n$. \qedhere
\end{proof}

\subsection{Proof of Liouville property under conformal dimension $<2$}
In this subsection, we complete the proof of the following statement on Liouville property.
\begin{thm} \label{t-Liouville} Let $(G, \bim)$ be finitely generated  contracting group such that $\cdim\lims<2$. Let $p>p_c(G, \bim)$. Then every symmetric probability measure $\mu$ on $G$ with finite $p$-moment has trivial Poisson boundary.
\end{thm}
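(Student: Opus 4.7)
The plan is to verify the entropy criterion of Proposition~\ref{p-Munchaussen}: I will produce a sequence $\varepsilon_n\to 0$ such that, for every $n$,
\[
\limsup_{t\to\infty}\frac{1}{t}\mathbb{E}\left[\sum_{v\in\alb^n}l(\gbf_t|_v)\right]\le\varepsilon_n.
\]
Since $\cdim\lims<2$ bounds the topological dimension of $\lims$ by $1$, Theorem~\ref{th:onedim} supplies a graph contracting model $\Delta_0$ (after replacing $\bim$ by a power if necessary), and we fix the singular regions $U_{z,n}\subset\alb^n$ as in Notation~\ref{notation:traverses}.

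\emph{Warm-up: finitely supported case.} Suppose first that $\mu$ is supported on a finite symmetric generating set $S$. Applying Theorem~\ref{th:traverses} to the random word $\gbf_t=\mathbf{s}_t\cdots\mathbf{s}_1$ gives $\sum_{v}l(\gbf_t|_v)\le C\tau_n(\gbf_t)+C|\alb^n|$. For each pair of distinct vertices $z_1,z_2$ of $\Gamma_0=\Delta_0/G$ and each $v_1\in U_{z_1,n}$, the traverses through $v_1$ are excursions of the reversible Markov chain $(\gbf_k\cdot v_1)_{k\ge 0}$ on $\alb^n$ from $U_{z_1,n}$ to $U_{z_2,n}$; a standard renewal-type argument combined with Lemma~\ref{l-capacity-rw} then gives
\[
\mathbb{E}[\tau_n(\gbf_t)]\le C't\sum_{z_1\ne z_2}\Capc_{2,\mu}(U_{z_1,n},U_{z_2,n}),
\]
which is $t\cdot o_n(1)$ by Corollary~\ref{c-resistance-traverse}. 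Dividing by $t$ and sending $t\to\infty$ completes this case.

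\emph{General $p$-moment case.} For general $\mu$ with finite $p$-moment ($p>p_c$), Theorem~\ref{th:traverses} cannot be applied directly since individual $\mu$-steps can have arbitrarily large word length. Fix a threshold $K\ge 1$ and declare $\mathbf{s}_i$ \emph{short} if $l(\mathbf{s}_i)\le K$, \emph{long} otherwise. The plan is to revisit the proof of Theorem~\ref{th:traverses} while tracking separately the geodesic interpolations associated to short and long jumps, producing a bound of the form
\[
\sum_{v}l(\gbf_t|_v)\le C\tau_n^{\mathrm{sh}}(\gbf_t)+C\!\!\!\sum_{i:\,l(\mathbf{s}_i)>K}\sum_{v}l(\mathbf{s}_i|_v)+C|\alb^n|,
\]
where $\tau_n^{\mathrm{sh}}$ collects only those traverses produced by the short part of the walk. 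The short-step traverse term is controlled by $\ell^2$-capacity exactly as in the finitely supported case, yielding $t\cdot o_n(1)$ for each fixed $K$. For the long-jump term, Lemma~\ref{lem:lengthofsections} combined with the $\ell^p$-contraction estimates of Propositions~\ref{prop:largescalesumG} and~\ref{pr:energyestimateforg} (where the choice $\beta=2$ is admissible because $\cdim<2$) bounds the expected contribution per unit time by a quantity involving $\mathbb{E}[l(\mathbf{s})^p\mathbf{1}_{l(\mathbf{s})>K}]$, which vanishes as $K\to\infty$ by the finite $p$-moment. The proof concludes by a two-parameter limit: first let $t\to\infty$, then let $K=K(n)\to\infty$ slowly as $n\to\infty$.

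The main technical difficulty is the geometric decomposition in the general case: one must adapt the proof of Theorem~\ref{th:traverses} so that short and long steps are treated separately while preserving a bona fide traverse structure on the short-step subwalk, to which Lemma~\ref{l-capacity-rw} and Corollary~\ref{c-resistance-traverse} can be applied. Once this decomposition is in place, the combination of $\ell^2$-capacity estimates for short moves with $\ell^p$-contraction estimates for long moves delivers the required entropy decay.
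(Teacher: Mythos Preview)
Your overall strategy matches the paper's: use the graph model from Theorem~\ref{th:onedim}, traverse counting, the capacity estimate (Corollary~\ref{c-resistance-traverse}), and $\ell^p$-contraction for the remainder, then conclude via Proposition~\ref{p-Munchaussen}. The finitely supported case is exactly as in the paper. The difference is in the short/long decomposition for general $\mu$: you split by step word length $l(\mathbf{s}_i)\le K$ versus $>K$, whereas the paper splits \emph{per vertex} by whether the image distance $d(I^n(\xi_{i-1}\otimes v), I^n(\xi_i\otimes v))$ exceeds $1/3$. With the paper's choice the long-jump remainder is $\sum_i\sum_{v:\,d\ge 1/3} d$, and since $d\le 3^{p-1}d^p$ on that set, Proposition~\ref{pr:largescalesumX} (not Proposition~\ref{pr:energyestimateforg}, which concerns the metric on $\lims$ rather than on $\Delta_0$) bounds the sum over \emph{all} levels of the tree by $C\,l(\mathbf{s}_i)^p$; hence the level-$n$ contribution tends to $0$ automatically, with no threshold $K$ and no two-parameter limit.

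Your per-step split has a gap. The displayed inequality with a single additive $C|\alb^n|$ is not correct: applying Theorem~\ref{th:traverses} to each maximal short run separately produces an additive $C|\alb|^n$ per run, hence an extra term of order $m|\alb|^n$ where $m$ is the number of long steps (equivalently, $\sum_{v\in\alb^n} l(\mathbf{s}_i|_v)$ for a long $\mathbf{s}_i$ already carries a contribution of order $|\alb|^n$ from nucleus sections). Killing this term after dividing by $t$ forces $K(n)^p\gg |\alb|^n$, while the traverse argument (all short-step images must land within distance $1/3$) forces roughly $K(n)\lesssim \lambda^{-n}$ for the model's contraction rate $\lambda$; these two constraints are not automatically compatible, and you do not address this. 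Replacing the word-length threshold by the paper's per-vertex image-distance threshold removes the tension entirely and makes the proof go through without the double limit.
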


\begin{cor}
Let $(G, \bim)$ be a finitely generated contracting group. If $\cdim\lims<2$, then $G$ is amenable.
\end{cor}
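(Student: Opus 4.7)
The plan is to deduce this corollary from Theorem \ref{t-Liouville} by a classical observation of Furstenberg: pick any symmetric, finitely supported, non-degenerate probability measure $\mu$ on $G$, for instance the uniform measure on a finite symmetric generating set. Since $p_c(G,\bim)\le\cdim\lims<2$ by Theorem \ref{th:critexp}, we may fix $p\in(p_c(G,\bim),2)$, and $\mu$ automatically has finite moment of every order. Theorem \ref{t-Liouville} then yields that the $\mu$-random walk has trivial Poisson boundary, and by Furstenberg's theorem recalled in \S\ref{s-Liouville} this implies amenability of $G$. The content therefore lies in proving Theorem \ref{t-Liouville}, for which I outline the strategy below.

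By the Kaimanovich--Vershik entropy criterion combined with Proposition \ref{p-Munchaussen}, it suffices to exhibit a sequence $\varepsilon_n\to 0$ such that for every $n$,
\[\limsup_{t\to\infty}\frac{1}{t}\,\mathbb{E}\!\left[\sum_{v\in\alb^n} l(\gbf_t|_v)\right]\le \varepsilon_n.\]
Since $\cdim\lims<2$ forces the topological dimension of $\lims$ to be at most $1$ (via the general bound $\dim_{\mathrm{top}}\le\cdim$), Theorem \ref{th:onedim} puts us in the setting of Notation \ref{notation:traverses}. The key geometric input is then Theorem \ref{th:traverses}, which bounds $\sum_{v\in\alb^n} l(g|_v)$ by a constant multiple of $\tau_n(\sbf_t\cdots\sbf_1)+|\alb|^n$, where $\tau_n$ counts traverses of level $n$ between the finitely many pairs of distinct singular regions $U_{z_1,n},U_{z_2,n}$ indexed by vertices of $\G_0=\Delta_0/G$. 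For a symmetric random walk, reversibility (Lemma \ref{l-capacity-rw}) gives
\[\mathbb{E}\bigl[\#\{\text{traverses between } U_{z_1,n},U_{z_2,n}\text{ in time }t\}\bigr]\le C\,t\,\Capc_{2,\mu}(U_{z_1,n},U_{z_2,n}),\]
and Corollary \ref{c-resistance-traverse} asserts exactly that this capacity tends to $0$ as $n\to\infty$. Summing over the finitely many pairs $(z_1,z_2)$ of vertices of $\G_0$ and dividing by $t$ produces the desired $\varepsilon_n\to 0$, the error $|\alb|^n/t$ being swallowed when $t\to\infty$.

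The main obstacle, and the reason for the finite $p$-moment hypothesis, is that Theorem \ref{th:traverses} is stated for words in a fixed finite generating set; when $\mu$ is not finitely supported, a single step $\sbf_i$ can be a ``long jump'' whose sections contribute substantially to $\sum_v l(\gbf_t|_v)$ without producing any traverses in the Schreier graph. The remedy, which I expect to be the most delicate step, is to decompose the expected total section length into a traverse-controlled contribution, handled by the $\ell^2$-capacity argument above, and a long-jump contribution. For the latter, one invokes Propositions \ref{prop:largescalesumG} and \ref{pr:largescalesumX}: for $p>p_c(G,\bim)$ the assumption $\eta_p<1$ gives, for any single group element $h$, a bound of the form $\sum_{v\in\alb^n} l(h|_v)^p\le C\,\eta_p^{\,n'}\,l(h)^p$ on the tail portion of the portrait. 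Combined with finiteness of $\sum_g\mu(g)l(g)^p$, this forces the per-step long-jump contribution to decay like $\eta_p^{\,n}$ in $n$, uniformly in $t$. Adding this to the capacity-based bound gives the required $\varepsilon_n\to 0$ and completes the proof of Theorem \ref{t-Liouville}, from which the corollary follows as above.
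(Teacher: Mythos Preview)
Your proposal is correct and matches the paper's approach: the paper gives no separate proof of the corollary, treating it as immediate from Theorem~\ref{t-Liouville} via Furstenberg's observation (recalled in \S\ref{s-Liouville}), exactly as you do in your first paragraph. Your outline of the proof of Theorem~\ref{t-Liouville} also tracks the paper's argument closely---the only minor imprecision is in how you phrase the long-jump estimate (the paper does not prove level-wise decay $\eta_p^{\,n}$ directly, but rather uses Proposition~\ref{pr:largescalesumX} to show the sum over all levels is finite, whence the $n$-th term tends to zero), but this does not affect the validity of the strategy.
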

In the proofs below we take terminology and notation from Subsection \ref{subsec:one-dim}. In particular, we suppose that $I\colon \Delta_0\otimes \bim^{\otimes n_0}\arr\Delta_0$ is a contracting model, where $\Delta_0$ is a locally finite connected graph, and  after replacing $\bim$ by $\bim^{n_0}$ (which does not change the group $G$ as an abstract group) we suppose that $n_0=1$. As in Subsection \ref{subsec:one-dim}, we also assume (upon passing to a barycentric subdivision) that no pair of neighbouring vertices of $\Delta_0$ belong to the same $G$-orbit.  Recall that then $\Gamma_0\colon =\Delta_0/G$ is a finite graph with no loops. For a vertex $z$ of $\Gamma_0$ we define the sets $U_{z,n}$ and the number of traverses $\tau_n$ as in Notation \ref{notation:traverses} and Definition \ref{d-traverses}, and are in position to apply Theorem \ref{th:traverses}. 

For clarity we first prove the theorem under the simplified assumption that $\mu$ is symmetric and finitely supported (which is enough to show amenability of $G$).

\begin{proof}[Proof of Theorem \ref{t-Liouville} (finite support case).]
Let $\mu$ be a symmetric probability measure supported on the finite set  $S$, and $\gbf_t=\sbf_t\cdots \sbf_1$ be the associated random walk.  We wish to estimate the expected number of traverses $\mathbb{E}[\tau_n(\sbf_t\cdots \sbf_1)]$. By Corollary \ref{c-resistance-traverse} we have 
$\max_{z_1, z_2} \Capc_{2, \mu} (U_{z_1, n}, U_{z_2, n})\le C\lambda^n$ for some $C>0$ and $\lambda\in (0, 1)$, where the maximum is taken over all distinct vertices $z_1, z_2$ of $\Gamma_0$. 

Say that a level $n$ traverse from $z_1$ to $z_2$ starts at time $i$ at a vertex $v\in U_{z_1,n}$ if there exists $w\in U_{z_2, n}$ and $j\ge i$ so that $(i, j, v, w)$ is a traverse from $z_1$ to $z_2$. Denote this event by $A(i, v, z_1, z_2)$. Let  $N_{i}$ be the total number of traverses that start at time $i$. Then $N_{i}=\sum_{z_1, z_2} \sum_{v\in U_{z_1, n}} \mathbbm{1}_{A(i, v, z_1, z_2)}$. By the Markov property, the probability $\Pr(A(i, v, z_1, z_2))$ is equal to the probability that the random walk started at $v$ visits $U_{z_2, n}$ before going back to  $U_{z_1, n}$, i.e. to $\mathbb{P}(\Tbf_{v\to U_{z_2, n}}<\Tbf_{v\to U_{z_1, n} })$. Hence it follows from Lemma \ref{l-capacity-rw} that
\[\mathbb{E} [N_i] = \sum_{z_1, z_2} \sum_{v\in U_{z_1, n}} \mathbb{P}(\Tbf_{v\to U_{z_2, n}}<\Tbf_{v\to U_{z_1, n} })\le |\Gamma_0|^2  C\lambda^n,\]
where $|\Gamma_0|$ stands for the size of the vertex set of $\Gamma_0$. Therefore,  we have
\[\mathbb{E}[\tau_n(\sbf_t\cdots \sbf_1)]\le \sum_{i=1}^t \mathbb{E}[N_i]\le |\Gamma_0|^2 C\lambda^nt.\]
By Theorem \ref{th:traverses} we deduce that there is $C_1>0$ (not depending on $n$) such that
\[\limsup_{t\to \infty}\frac{1}{t}\mathbb{E}[\sum_{v\in \alb^n} l(\gbf_t|_v)]\le \liminf_{t\to \infty} \frac{1}{t}\mathbb{E}[\tau_n(\sbf_t\cdots \sbf_1)] \le C_1\lambda^n.\]
Proposition \ref{p-Munchaussen} concludes the proof. \qedhere

\end{proof}

We now move to the general case. 
\begin{proof}[Proof of Theorem \ref{t-Liouville}] 
First note that if $p_c(G, \bim)<1$, then for every $p\in (p_c(G, \bim), 1)$ the volume growth of $G$ is bounded above by $\exp(Cn^p)$ for some $C>0$ (see Proposition \ref{prop:portraisize} and the discussion after it). It follows from a general argument that any measure on $G$ (not necessarily symmetric) of finite $p$-moment, $p>p_c(G, \bim)$, has finite entropy and trivial Poisson boundary, see Corollary 2.3 in \cite{EZ}.

For the rest of the proof we assume that $p_c(G, \bim)\ge 1$. Let $\mu$ be a measure on $G$ of finite $p$-moment, $p>p_c(G, \bim)$. In particular, $\mu$ has finite first moment, and thus expectations of the length of sections $l(\gbf_t|_v)$ are finite. By Corollary \ref{c-resistance-traverse}, we have the capacity estimate
\begin{equation} \max_{z_1, z_2\in \Gamma_{0}} \Capc_{2, \mu} (U_{z_1, n}, U_{z_2, n})\le C\lambda^n. \label{e-resistance-bound} \end{equation}

The main difference from the finite support case is that we can no longer directly apply Theorem \ref{th:traverses} to a word $\sbf_t \cdots \sbf_1$ consisting of samples of $\mu$ because the support of $\mu$ is infinite.

Let us revisit the argument in the proof of Theorem \ref{th:traverses}, using the same notation as there. Fix $\xi_0\in \Delta_0$, and let $\mathcal{T}\subset \Delta_0$ be a compact subset containing $\xi_0$ and such that $I^{|v|}(\xi\otimes v)\in \mathcal{T}$ for every $\xi\in \mathcal{T}$ and $v\in \alb^n$ (see Lemma \ref{lem:lengthofsections}~\ref{i-attractor}).  By the Schwarz--Milnor lemma (and compactness of $\mathcal{T})$, we can also fix a constant $L>0$ such that 
\begin{equation}
\label{e-SM-in-final-theorem} 
L^{-1}l(g)-L\le d(\zeta_1, \zeta_2\cdot g) \le Ll(g)+L
\end{equation}
for every $\zeta_1, \zeta_2\in \mathcal{T}$ and every $g\in G$. 

Let $\sbf_t\cdots \sbf_2\sbf_1$ be a sample of the random walk, and set $\mathbf{g}_i=\sbf_i\cdots \sbf_2\sbf_1$, with $\mathbf{g}_0=1$, and $\xi_i=\xi_0\cdot \mathbf{g}_i$. Let $n$ be large enough and fix $v\in \alb^n$.  Choose a geodesic path $\delta_i$ from $I^n(\xi_{i-1} \otimes v)$ to $I^n(\xi_i \otimes v)$  and let $\gamma=\delta_t\cdots \delta_1$ be their concatenation. By the same arguments as in Theorem~\ref{th:traverses}, the length of $\mathbf{g}|_v$ is bounded by an affine function of the total number of edges crossed by $\gamma$. Consider a crossing of an edge $e$ corresponding to a subcurve $\gamma|_{[t_1, t_2]}$, and let $\delta_i, \delta_j$ be the subcurves containing $\gamma(t_1)$ and $\gamma(t_2)$. If both $\delta_i$ and $\delta_j$ have length less than 1/3, then we can associate to the crossing of $e$ a traverse of the word $\sbf_t\cdots \sbf_2\sbf_1$. The number of edges crossed by $\gamma$ whose beginning or end is inside a path $\delta_i$ of length $\ge 1/3$ is not larger than the length of $\delta_i$ plus $2=6\times ({1}/{3})$, and hence less than 7 times the length of $\delta_i$.  We obtain the estimate
\begin{multline*}
\sum_{v\in \alb^n}l(\mathbf{g}_t|_v)\le C|\alb|^n+ C\tau_n(\mathbf{s}_t\cdots \mathbf{s}_2 \mathbf{s}_1)\\
+C\sum_{i=1}^{t} \, \sum_{\substack{v\in \alb^n\\ d(I^n(\xi_i\otimes v), I^n(\xi_{i-1}\otimes v))\ge 1/3}} d(I^n(\xi_i\otimes v), I^n(\xi_{i-1}\otimes v)),\end{multline*}
for some $C>0$.  By \eqref{e-resistance-bound} and by the same argument as for finitely supported measures, we have  \[\mathbb{E}\left[\tau_n(\mathbf{s}_t\cdots \mathbf{s_2} \mathbf{s}_1)\right]\le C_1\lambda^n\cdot t\]
for some $C_1>0$ and $\lambda\in (0, 1)$.
 To conclude the proof, we need to bound the last sum in the estimate above. 
First observe that 
\[I^n(\xi_i\otimes v)=I^n(\xi_0\otimes \mathbf{s}_i\cdots \mathbf{s}_1(v))\cdot \mathbf{s}_i\cdots \mathbf{s}_1|_v\]
and since the action of $G$ on $\Delta_0$ is isometric and $\mathbf{s}_i\cdots \mathbf{s}_1|_v= (\mathbf{s}_i|_{\mathbf{s}_{i-1} \cdots \mathbf{s}_1(v)}) \cdot (\mathbf{s}_{i-1}\cdots \mathbf{s}_1|_v)$, we have
\[d\left(I^n(\xi_i\otimes v), I^n(\xi_{i-1} \otimes v)\right)=d\left(I^n(\xi_0\otimes\mathbf{s}_i\cdots \mathbf{s}_1(v))\mathbf{s}_i|_{\mathbf{s}_{i-1}\cdots \mathbf{s}_1(v)}, I^n(\xi_0 \otimes \mathbf{s}_{i-1}\cdots \mathbf{s}_1(v))\right).\]
Thus, by \eqref{e-SM-in-final-theorem} we have
\[L^{-1} l(\mathbf{s}_i|_{\mathbf{s}_{i-1}\cdots \mathbf{s}_1(v)})-L \le d\left(I^n(\xi_i\otimes v), I^n(\xi_{i-1} \otimes v)\right) \le L l(\mathbf{s}_i|_{\mathbf{s}_{i-1}\cdots \mathbf{s}_1(v)})+L \]
for every $i$ and $v\in \alb^n$. Note that in particular, if $d\left(I^n(\xi_i\otimes v), I^n(\xi_{i-1} \otimes v)\right)\ge 2L$, then 
\[(2L)^{-1} l(\mathbf{s}_i|_{\mathbf{s}_{i-1}\cdots \mathbf{s}_1(v)}) \le d\left(I^n(\xi_i\otimes v), I^n(\xi_{i-1} \otimes v)\right) \le 2L l(\mathbf{s}_i|_{\mathbf{s}_{i-1}\cdots \mathbf{s}_1(v)}). \]
For $v\in \alb^n$ with $n\ge k$, let us denote by $\pi_{n-k}(v)$ the left prefix of $v$ of length $n-k$. Since the map $I$ is contracting, there exists $C_2>0$ and $\eta\in (0, 1)$ such that for every $n\ge k$ we have
\[d(I^n(\xi_i\otimes v), I^n(\xi_{i-1}\otimes v))\le C_2 \eta^{k} d(I^{n-k}(\xi_i\otimes \pi_{n-k}(v)), I^{n-k}(\xi_{i-1}\otimes \pi_{n-k}(v))).\]
Hence there exists $k>0$ (depending only $C_2, \eta$) such that for every $n\ge k$, every $v\in \alb^n$ and every $i$ such that  $d(I^n(\xi_i\otimes v), I^n(\xi_{i-1}\otimes v))\ge 1/3$, we have 
\[ d(I^{n-k}(\xi_i\otimes \pi_{n-k}(v)), I^{n-k}(\xi_{i-1}\otimes \pi_{n-k}(v))) \ge 4L.\]
The last inequality implies $l(\mathbf{s}_i|_{\mathbf{s}_{i-1}\cdots \mathbf{s}_1(\pi_{n-k}(v))})\ge 2$, since we have \[d(I^{n-k}(\xi_i\otimes \pi_{n-k}(v)), I^{n-k}(\xi_{i-1}\otimes \pi_{n-k}(v)))\le 2L l(\mathbf{s}_i|_{\mathbf{s}_{i-1}\cdots \mathbf{s}_1(\pi_{n-k}(v))}).\] Without loss of generality, we suppose that the generating set used to define $l(\cdot)$ contains the nucleus $\nuke$, so, in particular, the last condition implies that $\mathbf{s}_i|_{\mathbf{s}_{i-1}\cdots \mathbf{s}_1(\pi_{n-k}(v))}\notin \nuke$. We can also suppose that $k$ is large enough so that $d(I^n(\xi_i\otimes v), I^n(\xi_{i-1}\otimes v))\le d((I^{n-k}(\xi_i\otimes \pi_{n-k}(v)), I^{n-k}(\xi_{i-1}\otimes \pi_{n-k}(v))$. Since the map $\pi_{n-k}\colon \alb^n \to \alb^{n-k}$ is $|\alb|^k$-to-1, we deduce that for every $i$ and $n\ge k$ we have
\begin{multline*}
 \sum_{\substack{v\in \alb^n\\ d(I^n(\xi_i\otimes v), I^n(\xi_{i-1}\otimes v))\ge 1/3}} d(I^n(\xi_i\otimes v), I^n(\xi_{i-1}\otimes v))\\ \le |\alb|^k \cdot \sum_{\substack{v\in \alb^{n-k}\\ d(I^{n-k}(\xi_i\otimes v), I^{n-k}(\xi_{i-1}\otimes v))\ge 4L}} d(I^{n-k}(\xi_i\otimes v), I^{n-k}(\xi_{i-1}\otimes v)) \\
 \le 2L |\alb|^k \sum_{{v\in \alb^{n-k},  \mathbf{s}_i |_v \notin \nuke,}} l(\mathbf{s}_i|_v)^p.
\end{multline*}
In the last line we used that $l(\mathbf{s}_i|_v)\le l(\mathbf{s}_i|_v)^p$ since $p>1$ and $l(\mathbf{s}_i|_v)$ is a non-negative integer. By Proposition \ref{prop:largescalesumG}, there exist $C_3>0$ and $\eta_1\in (0, 1)$ such that 
\[\sum_{{v\in \alb^{n} \mathbf{s}_i|_v\notin \nuke}} l(\mathbf{s}_i|_v)^p\le C_3 \eta_1^n l(\mathbf{s}_i)^p.\]
Since $\mu$ has finite $p$-moment, taking the expectation on both sides we find
\[ \mathbb{E}\left( \sum_{{v\in \alb^{n}, \mathbf{s}_i|_v\notin \nuke}} l(\mathbf{s}_i|_v)^p\right)\le  C_4 \eta_1^n, \quad \quad C_4=C_3 \mathbb{E}\left(l(\mathbf{s}_1)^p\right). \]

Therefore there is a constant $C_5$ (depending only on the previous constants $C, C_1, \ldots$) such that for every $n\ge k$ and every $t$ we have
 \begin{multline*}
 \mathbb{E}\left(\sum_{v\in \alb^n}l(\mathbf{g}_t|_v)\right)\le C|\alb|^n+ C\mathbb{E}\left(\tau_n(\mathbf{s}_t\cdots \mathbf{s_2} \mathbf{s}_1)\right) +C\sum_{i=1}^{t} \mathbb{E}\left(\sum_{v\in \alb^{n-k}, \mathbf{s}_i|_v \notin \nuke} l(\mathbf{s}_i|_v)^p\right)\\
 \le C|\alb|^n +C_5(\lambda^n+ \eta_1^{n-k})\cdot t.  \end{multline*}

Hence  Proposition \ref{p-Munchaussen} concludes the proof. \qedhere
\end{proof}

 Theorem \ref{th-intro-ineq} stated in the Introduction follows from Theorems \ref{th:critexp} and \ref{t-Liouville}.

\section{Examples}
\subsection{Automata of polynomial activity growth}\label{subsec:poly-exam}

Let $G$ be a faithful self-similar group acting on $\xs$. Consider, for $g\in G$, the function
\[n\mapsto \alpha_g(n)\colon =|\{v\in\alb^n \colon  g|_v\ne 1\}|\]
counting the number of non-trivial sections on the $n$th level. Since the total set of sections $\{g|_v \colon  v\in\xs\}$ is finite, $\alpha_g(n)$ is the number of paths in a finite directed graph (the \emph{Moore diagram} with the trivial state removed) starting in a vertex. Therefore, the function $\alpha_g(n)$ is either exponentially growing, or is bounded by a polynomial \cite[Theorem A]{sid:cycl}. It is not hard to show that the subset of elements $g\in G$ for which $\alpha_g(n)$ is bounded from above by a polynomial of degree $d$ is a subgroup of $G$. We say that $G$ is \emph{generated by an automaton of polynomial activity growth} if $\alpha_g(n)$ is bounded by a polynomial for every $g\in G$.

Groups generated by automata of polynomial activity growth were introduced by S.~Sidki in~\cite{sid:cycl} (in greater generality than we described here). In particular, he showed that such groups can not have free subgroups.

A special sub-class of such groups are groups generated by \emph{bounded automata}, i.e., consisting of elements $g$ such that $\alpha_g(n)$ is bounded. It includes iterated monodromy groups of all sub-hyperbolic polynomials (see~\cite{bartnek:mand,nek:polynom}). 

L.~Bartholdi and B.~Vir\'ag proved amenability of $\img{z^2-1}$ in~\cite{barthvirag} using random walks. Their methods were extended in~\cite{bkn:amenability} to prove that all groups generated by bounded automata are amenable. Later, this result was extended to groups for which $\alpha_g(n)$ is bounded by polynomials of degree 1 (in~\cite{amirangelvirag:linear}) and 2 (in~\cite{amirangelvirag:quadratic}).

The following theorem is proved in~\cite{nekpilgrimthurston}.

\begin{thm}
\label{th:polactivity}
If $(G, \bim)$ is a self-similar contracting group such that its action on $\xs$ for some basis $\alb\subset\bim$ is a subgroup of the group of automata of polynomial activity growth, then $\cdim\lims\le 1$. \end{thm}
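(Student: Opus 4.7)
The plan is to exhibit, for every $\epsilon>0$, an Ahlfors-regular quasi-conformal metric on $\lims$ with Hausdorff dimension at most $1+\epsilon$, thereby showing $\cdim\lims\le 1$. The polynomial activity hypothesis enters the argument by providing strong combinatorial control on how tiles of $\lims$ overlap, which is precisely the information needed to bring the Hausdorff dimension down to the topological dimension.

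First I would show that the topological dimension of $\lims$ is at most $1$. By Proposition~\ref{pr:intersectingtiles}, two level-$n$ tiles $\Tcal_u,\Tcal_v$ intersect exactly when $u=g(v)$ for some $g\in\nuke$, so the overlap pattern is governed entirely by the nucleus. Polynomial activity implies that for each $g\in\nuke$ the function $\alpha_g(n)$ grows polynomially, which forces the \emph{branching set} $B\subset\lims$, consisting of points represented by more than one sequence in $\xmo$, to be a countable $G$-invariant set obtained as the orbit of the finitely many attractors of nucleus elements. Tiles touch only at points of $B$, so $\lims\setminus B$ is locally tree-like, giving topological dimension $\le 1$. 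Theorem~\ref{th:onedim} then produces a graph contracting model $I:\Delta_0\otimes\bim^{\otimes n_0}\arr\Delta_0$, which serves as the combinatorial scaffolding for the metric construction.

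The central step is constructing the metrics $d_\epsilon$. Starting from a visual metric, whose Hausdorff dimension is $\frac{\ln|\alb|}{\alpha}$, I would pass to a high power $\bim^{\otimes N}$ of the biset and assign tile-dependent contraction ratios: each level-$N$ tile receives a weight reflecting its \emph{multiplicity}, that is, the number of other tiles it meets. Polynomial activity ensures that most tiles have multiplicity $1$ and that the sum of multiplicities over $\alb^N$ grows only polynomially in $N$, while the number of tiles grows like $|\alb|^N$. This imbalance allows the contraction ratios to be tuned so that the effective Hausdorff dimension of the associated self-similar metric on $\lims$ is driven down to any value exceeding $1$, while the quasi-conformality condition of Definition~\ref{def:conformalmetric} is preserved by comparison with the visual metric, and Ahlfors regularity follows from an elementary volume count using the polynomial multiplicity bounds.

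The main obstacle is the metric construction in the previous paragraph: naive rescaling to lower the Hausdorff dimension typically destroys either Ahlfors regularity or the quasi-symmetric equivalence to a visual metric. The polynomial activity hypothesis is used precisely to produce weightings that maintain both properties at once; the detailed implementation, together with the verification that the resulting metric is Ahlfors-regular and quasi-conformal, is the content of the construction carried out in~\cite{nekpilgrimthurston}.
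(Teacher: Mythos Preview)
The paper does not prove this theorem at all: it is simply quoted from~\cite{nekpilgrimthurston}, and your final sentence defers to the same reference. In that sense your proposal and the paper agree.

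Your sketch, however, contains a genuine gap in the first step. The claim that polynomial activity forces the branching set $B$ (points of $\lims$ with more than one preimage in $\xmo$) to be countable is correct for \emph{bounded} automata, where each nucleus element has finitely many non-trivial sections on each level and the non-identity sections eventually cycle through finitely many attracting infinite words. For activity degree $d\ge 1$ this argument breaks: the set $\{v\in\alb^n:g|_v\ne 1\}$ grows like $n^d$, and the corresponding accumulation set in $\xmo$ need not be countable without further work. The actual argument in~\cite{nekpilgrimthurston} proceeds inductively on the activity degree, using that a degree-$d$ element restricts to degree-$(d-1)$ elements outside finitely many directions, which yields a countable \emph{separating} set rather than a countable branching set directly.

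Your second step (reweighting tiles to push the Hausdorff dimension toward $1$) is the right shape of idea, but as you acknowledge, all the content---in particular, verifying that the reweighted metric remains Ahlfors-regular and quasi-conformal---lives in the cited reference. So what you have written is a plausible outline rather than a proof, and the outline is only accurate for the bounded case without the inductive refinement.
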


\begin{cor} \label{c-polynomial}
    If $(G, \bim)$ satisfies the conditions of the above theorem, and if  $\mu$ is a symmetric probability measure on $G$ with a finite $p$-moment for  some $p>1$, then the $\mu$-random walk is Liouville. In particular $G$ is amenable.
\end{cor}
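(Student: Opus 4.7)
The plan is to derive Corollary \ref{c-polynomial} as an immediate consequence of Theorem \ref{t-Liouville}, using Theorem \ref{th:polactivity} to verify the conformal dimension hypothesis and Theorem \ref{th:critexp} to translate it into the critical exponent bound needed to accommodate the given moment condition. The entire argument is a composition of results already in hand; no new analysis is required.

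First, I would invoke Theorem \ref{th:polactivity} to conclude that $\cdim \lims \le 1$. In particular $\cdim \lims < 2$, so the standing hypothesis of Theorem \ref{t-Liouville} is satisfied. Next, Theorem \ref{th:critexp} yields
\[
p_c(G, \bim) \le \cdim \lims \le 1.
\]
Hence any $p > 1$ automatically satisfies $p > p_c(G, \bim)$. Applying Theorem \ref{t-Liouville} to such a $p$, every symmetric probability measure $\mu$ on $G$ with finite $p$-moment has trivial Poisson boundary, which is exactly the first assertion.

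For the amenability statement, one takes for example the uniform probability measure $\mu_0$ on a finite symmetric generating set of $G$; this measure is symmetric, non-degenerate, and finitely supported, hence has all moments. By the first part, $\mu_0$ has trivial Poisson boundary, and Furstenberg's criterion (already recalled in Section \ref{s-Liouville}) then gives amenability of $G$.

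There is no genuine obstacle: all the work has been done in Theorems \ref{th:polactivity}, \ref{th:critexp}, and \ref{t-Liouville}. The only conceptual point worth emphasizing in the write-up is that the polynomial activity hypothesis is used solely through the inequality $\cdim \lims \le 1$, and in particular our conclusion recovers and uniformly strengthens the earlier amenability results of \cite{barthvirag, bkn:amenability, amirangelvirag:linear, amirangelvirag:quadratic}, while extending the Liouville property to all symmetric measures of finite $(1+\varepsilon)$-moment.
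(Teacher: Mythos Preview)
Your proposal is correct and matches the paper's intended derivation: the corollary is stated without proof precisely because it follows immediately from Theorem~\ref{th:polactivity} (giving $\cdim\lims\le 1$), Theorem~\ref{th:critexp} (giving $p_c\le\cdim\lims\le 1$), and Theorem~\ref{t-Liouville}. Your write-up, including the remark that the polynomial activity hypothesis enters only through the bound $\cdim\lims\le 1$, is exactly the point the surrounding discussion in the paper is making.
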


Note that groups generated by bounded automata are always contracting, but this is no longer true for groups generated by automata of polynomial activity of degree at least 1. For example, the group generated by the wreath recursion $a=\sigma(1, a)$ and $b=(b, a)$ is generated by an automaton of linear activity growth and is not contracting. As shown by G.~Amir and B.~Vir\'ag in \cite{amirvirag:cubic}, there are automata groups of polynomial activity that do not have the Liouville property for any finitely supported symmetric non-degenerate measure so that the contracting assumption is crucial in Corollary \ref{c-polynomial}.

It follows from the results of~\cite{YNS} that a sufficient condition for a group generated by an automaton of polynomial activity growth to be amenable is recurrence of the orbital graphs of its action on the boundary of the tree $\xs$ (see the proof of \cite[Theorem 5.1]{YNS}). This gives another way to conclude amenability of groups satisfying the conditions of Theorem~\ref{th:polactivity}, see~\cite{nekpilgrimthurston}. In contrast, the statement on the Liouville property  in Corollary \ref{c-polynomial} is  new (even for finitely supported measures when the activity degree is larger than 1). For bounded automata groups, it extends the results of \cite{AAMV} to measures with a moment condition (in that particular case the proof of Theorem \ref{t-Liouville}  can actually be modified to  include measures with finite first moment, by showing that Proposition \ref{prop:portraisize} holds for $p=1$).   

\subsection{Iterated monodromy groups of rational functions}

\begin{thm}
\label{th:quasysimmjulia}
Let $f\in\C(z)$ be a sub-hyperbolic (e.g., post-critically finite) rational function. Then the canonical homeomorphism of the limit space of $\img{f}$ with the Julia set of $f$ is a quasi-symmetry with respect to the visual metric on the limit space and the spherical metric on the Julia set.
\end{thm}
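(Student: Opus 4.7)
The plan is to apply Proposition \ref{pr:self-replicating-dim}: since that proposition characterizes metrics quasi-symmetric to a visual metric as precisely the Ahlfors-regular quasi-conformal metrics (for self-replicating $(G,\bim)$), it suffices to verify, for the spherical metric $d_{sph}$ on $J_f$ viewed via the canonical homeomorphism as a metric on $\lims$, that: (i) $\img{f}$ is self-replicating, (ii) $d_{sph}$ is Ahlfors-regular, and (iii) $d_{sph}$ is quasi-conformal in the sense of Definition \ref{def:conformalmetric}. The self-replicating property of $\img{f}$ is immediate from the construction in Section \ref{ss:img}: the biset consists of pairs $(z,[\delta])$ with $z$ ranging over a fiber of $f$ and $\delta$ a path in the path-connected space $\mathbb{PC}^1\setminus P_f$, so the left $G$-action on $\bim$ is transitive. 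Ahlfors-regularity of $d_{sph}$ on $J_f$ is classical for post-critically finite rational maps: one uses the $f$-conformal measure of dimension $\dim_H J_f$ (or simply spherical area if $J_f=\mathbb{PC}^1$).

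The heart of the argument is (iii). By the construction behind Theorem \ref{th:imgandlimspace}, the $n$-th level tile $\Tcal_v\subset\lims$ corresponds under the canonical homeomorphism to a closed set $K_v\subset J_f$ obtained as the intersection of $J_f$ with a preimage under $f^n$ of a fixed piece of a natural initial partition of $\mathbb{PC}^1$ built from the basis paths $\ell_i$. Thus $T(\xi,n)$ is a bounded union of such preimage tiles. To show $T(\xi,n)$ is comparable in shape to a spherical ball, the plan is to apply distortion estimates to the inverse branches of $f^n$. When the relevant branch is univalent on a definite neighborhood of the tile (which is the case whenever the tile stays a definite spherical distance from $P_f$), Koebe's distortion theorem shows that $T(\xi,n)$ is round with bounded distortion, and Schwarz--Pick applied to the inverse branch shows that the diameter decays geometrically in $n$; this is exactly the contraction already invoked in the proof of Theorem \ref{th:imgandlimspace}.

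The main obstacle is tiles that come close to $P_f$, where the inverse branches of $f^n$ are ramified and Koebe does not apply directly. The standard remedy, going back to Thurston, is to work with the orbifold metric $d_{orb}$ on $\mathbb{PC}^1\setminus P_f$ coming from the canonical orbifold of $f$; with respect to $d_{orb}$ the map $f$ is uniformly expanding on $J_f$, and the inverse branches of $f^n$ become univalent in orbifold charts, so that Koebe yields quasi-conformality of $d_{orb}$ in the sense of Definition \ref{def:conformalmetric}. One then checks that $d_{orb}$ and $d_{sph}$ are locally quasi-symmetric in a neighborhood of $J_f$ (they are locally bi-H\"older with exponents determined by the ramification indices $\nu_f(c)$ at the points $c\in P_f\cap J_f$), which transfers quasi-conformality from $d_{orb}$ to $d_{sph}$. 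Proposition \ref{pr:self-replicating-dim} then delivers the desired conclusion: the canonical homeomorphism $(\lims,d_\alpha)\to(J_f,d_{sph})$ is a quasi-symmetry.
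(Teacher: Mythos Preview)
Your strategy differs from the paper's. The paper does not verify quasi-conformality of the spherical metric directly; it invokes the Ha\"issinsky--Pilgrim machinery of \emph{coarse expanding conformal} (CXC) dynamical systems, citing three black-box results: the limit system of a self-replicating contracting group is CXC for a visual metric \cite[Theorem~6.15]{haispilgr:subhyperbolic}; a post-critically finite rational map is CXC for the spherical metric on a neighborhood of its Julia set \cite[Theorem~3.3]{haispilgr:subhyperbolic}; and any topological conjugacy between CXC systems is automatically a quasi-symmetry \cite[Theorem~2.8.2]{haisinskypilgrim}. Your route through Proposition~\ref{pr:self-replicating-dim} is more hands-on, but your step (iii) amounts to reproving the second of these cited facts (uniform roundness of dynamical tiles in the spherical metric), so you are not bypassing the hard work, only sketching how it goes.

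There is a gap in that sketch at the transfer from $d_{orb}$ to $d_{sph}$. You assert that the two metrics are locally quasi-symmetric near $J_f$, offering as justification that they are bi-H\"older near the cone points in $P_f\cap J_f$. Bi-H\"older alone neither implies quasi-symmetry nor preserves the ball-sandwich condition of Definition~\ref{def:conformalmetric}. The quasi-symmetry claim is in fact true---the local model near a cone point of weight $\nu$ is governed by a radial power, and power maps on discs are quasi-symmetric---but this needs an argument, and making it uniform over all tiles and all levels $n$ is exactly the substance of the Ha\"issinsky--Pilgrim result. Your appeal to Ahlfors-regularity of $d_{sph}$ on $J_f$ as ``classical'' is likewise correct but is part of the same package. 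In short: the ingredients are the right ones and the outline is sound, but as written the proposal defers the real content to assertions that coincide with what the paper simply cites.
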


\begin{proof}
This is a result of P.~Ha\"issinksy and K.~Pilgrim. Namely,
it is proved in~\cite[Theorem~6.15]{haispilgr:subhyperbolic} that
the limit dynamical system $\si\colon\lims\arr\lims$ of a contracting self-similar group is \emph{coarse expanding conformal} with respect to a visual metric (see the definition in~\cite{haisinskypilgrim}).

It is also shown in~\cite[Theorem~3.3]{haispilgr:subhyperbolic} that for every sub-hyperbolic rational function $f$ the spherical metric restricted to a neighborhood of the Julia set of $f$ is also coarse expanding conformal. By~\cite[Theorem~2.8.2]{haisinskypilgrim} any topological conjugacy between coarse expanding conformal dynamical systems is a quasi-symmetry.
\end{proof}

\begin{cor}
Let $f\in\C(z)$ be a sub-hyperbolic rational function such that its Julia set is not the whole sphere (equivalently, having at least one attracting cycle). Then $\img{f}$ is amenable. Moreover, if $H\le \img{f}$ is any finitely generated subgroup, and if $\mu$ is any symmetric  probability measure on $H$ with finite second moment, then the random walk generated by $\mu$ is Liouville.
\end{cor}

\begin{proof}
It is enough to prove the second part of the statement, since a group is amenable if and only if all its finitely generated subgroups are. Upon replacing $H$ by a larger finitely generated subgroup, we can assume that it is self-similar  and contains the nucleus of $\img{f}$ (with respect to some basis of the associated biset). Then $H$ is itself a contracting self-similar group with the same limit space as $\img{f}$. By Theorems~\ref{th:quasysimmjulia} and ~\ref{t-Liouville}, and~\ref{th:critexp}, it is enough to show that the Julia set of $f$ has Hausdorff dimension strictly less than 2. This is shown in~\cite[Corollary~6.2]{mcmullen}. 
\end{proof}

Let us consider some examples of iterated monodromy groups that are not generated by automata of polynomial activity growth.

Consider $f(z)=z^2-\frac{1}{16z^2}$. This function is representative of the well-studied family of maps of the form $z^n+\frac{\lambda}{z^m}$, see the survey~\cite{sierpinskijulia}. See the Julia set of this function on Figure~\ref{fig:julia}.

\begin{figure}
    \centering
    \includegraphics[scale=0.85]{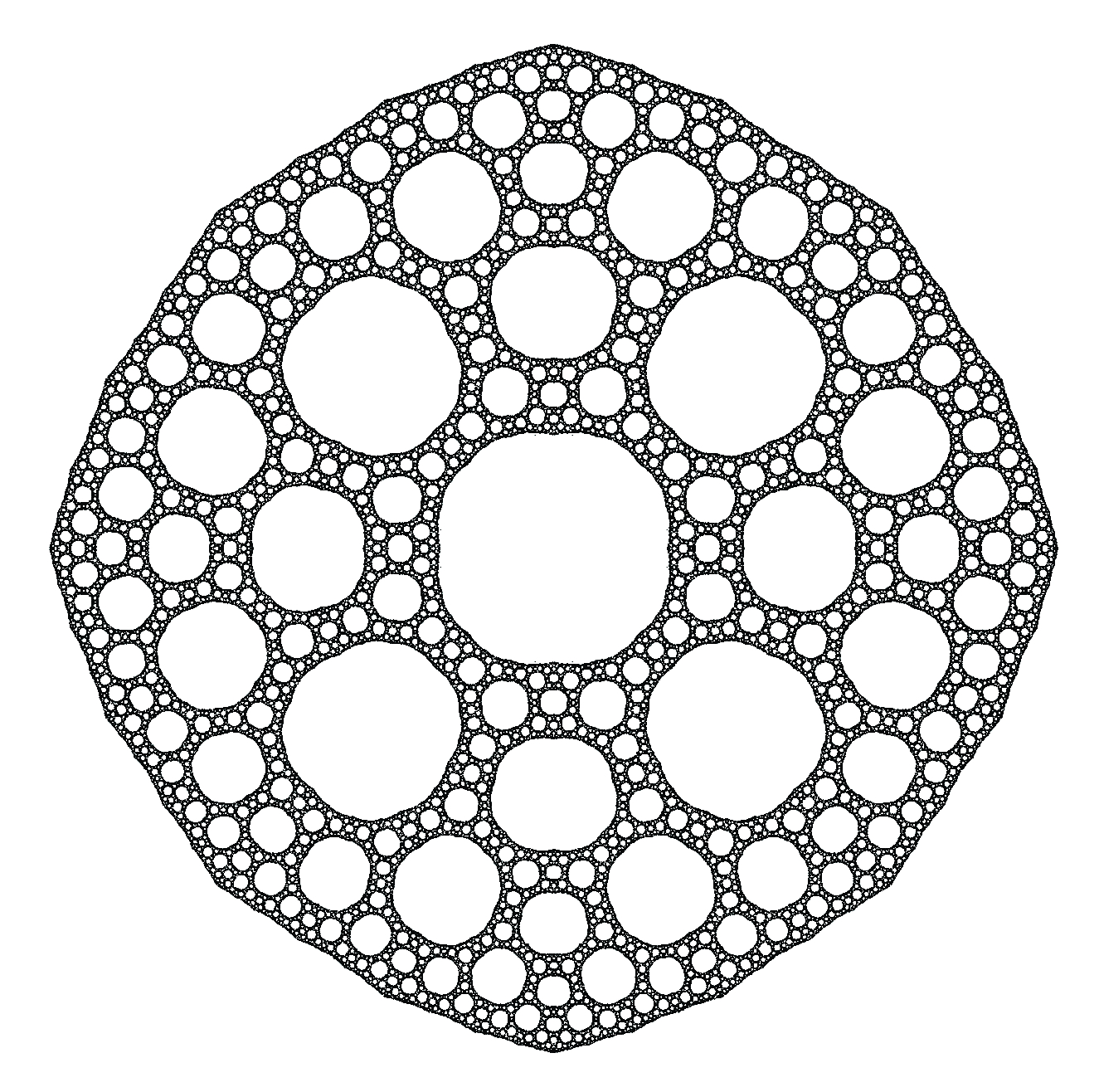}
    \caption{The Julia set of $z^2-\frac{1}{16z^2}$}
    \label{fig:julia}
\end{figure}

Its critical points are $0, \infty$, and roots of $z^4+1/16$. The latter critical points are mapped to $\pm\frac{i}{2}$, which are both mapped to $0$. Consequently, the post-critical set is $P_f=\{0, \infty, i/2, -i/2\}$. Let us take the generators $a, b, c$ of the fundamental group of $\mathbb{PC}^1\setminus P_f$, given by loops around the points $0$, $i/2$, and $-i/2$, respectively, and connected to a basepoint, which we choose to be equal to 1. The preimages of the basepoint 1 are $\pm\sqrt{2\pm\sqrt{5}}/2$ (where the choices of the sign are independent), which are approximately equal to $\pm 0.24i$ and $\pm 1.03$. Let us connect the basepoint to the preimages $\approx\pm 0.24i$ and $\approx 1.03$ by straight paths. We can not do it for $-1.03$, since such a path will contain $0$, instead, let us connect it to the basepoint by a path passing below $0$. We denote the connecting paths from $1$ to $\approx 0.24i$, $\approx -1.03$, $\approx -0.24i$, $\approx 1.03$ by $\ell_1, \ell_2, \ell_3, \ell_4$, respectively. This will be our chosen basis of the biset associated with the rational function (see~\ref{ss:img}). We also denote $\ell_1, \ell_2, \ell_3, \ell_4$ just by $1, 2, 3, 4$ when we write the corresponding wreath recursion.

The generators $a, b, c$, and their preimages $f^{-1}(a), f^{-1}(b), f^{-1}(c)$ together with the connecting paths $\ell_i$ are shown on Figure~\ref{fig:generators}.

\begin{figure}
    \centering
    \includegraphics{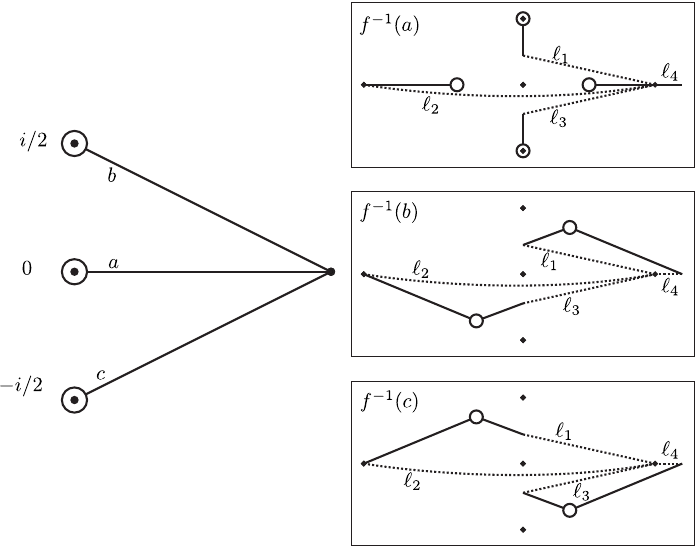}
    \caption{Generators of $\img{z^2-1/16z^2}$}
    \label{fig:generators}
\end{figure}

It follows that the iterated monodromy group is generated by the wreath recursion
\begin{equation} \label{e-recursion-function}
\begin{array}{l} a = (b, 1, c, 1),\\
 b = (23)(14),\\
c = (12)(34)(a, a^{-1}, 1, 1).\end{array}
\end{equation}
Note that all generators are of order 2. (In particular, we may replace $a^{-1}$ by $a$ in the wreath recursion.)










The wreath recursion implies the recurrent rules for the graph of actions on the levels $\alb^n$ of the tree, shown on Figure~\ref{fig:rule}.

\begin{figure}
    \centering
    \includegraphics{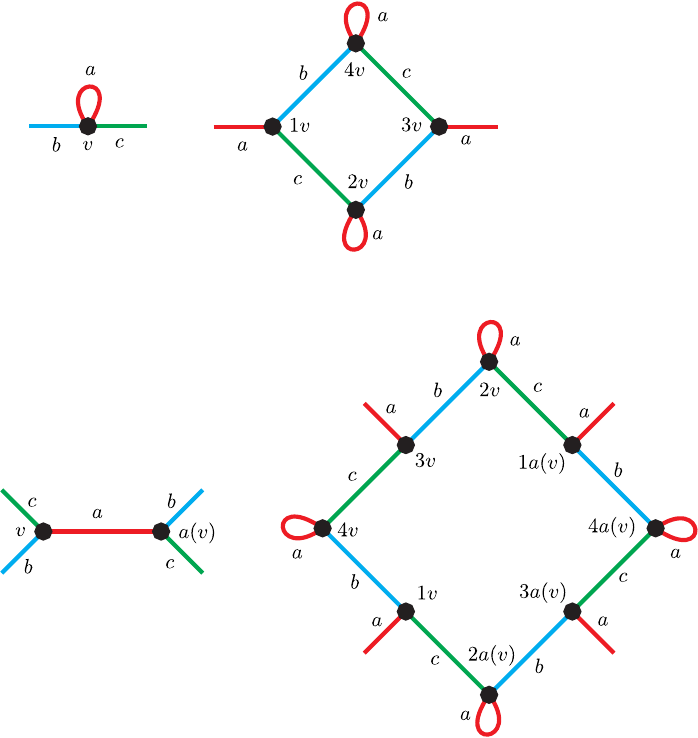}
    \caption{Recursion rules for Schreier graphs  of $\img{z^2-1/16z^2}$. From the wreath recursion \eqref{e-recursion-function}, it is not difficult to show that each Schreier graph can be uniquely decomposed into pieces as in the left part of the picture (vertices fixed by $a$ belong to a top piece, and those moved by $a$ to  a bottom piece). To obtain the $(n+1)$th  Schreier graph from the $n$th one, replace each piece with the corresponding one on the right part of the picture.   }
    \label{fig:rule}
\end{figure}

The same substitution rules from Figure~\ref{fig:rule} are also valid for the infinite orbital Schreier graphs of the action on the boundary $\alb^\omega$ of the tree $\xs$. If we modify the Schreier graphs (of the action of either $\xs$ of $\alb^\omega$)  by imposing  that the edges corresponding to $a$ have length 0 (i.e., contracting them to a single vertex), and that the edges corresponding to $b$ and $c$ have length 1, then we get new graphs that are (uniformly) quasi-isometric to the Schreier graphs (since the group $\langle a\rangle$ is finite). It follows then from the recurrent rules that the shift map $xv\mapsto v$ contracts the distances twice. Since this map is four-to-one, it follows that the orbital Schreier graphs have quadratic growth.

\begin{figure}
    \centering
    \includegraphics[width=\textwidth]{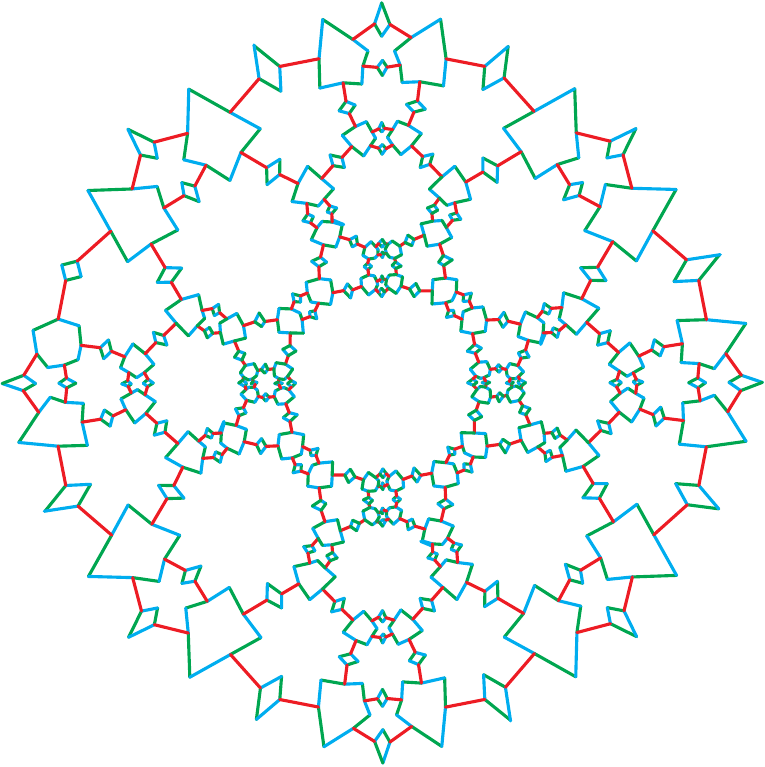}
    \caption{The 5th level Schreier graph of $\img{z^2-1/16z^2}$}
    \label{fig:carpet5a}
\end{figure}

See Figure~\ref{fig:carpet5a} where the graph of the action on the 5th level $\alb^5$ is shown in a way approximating the Julia set of $z^2-\frac{1}{16}z^2$.

\subsection{Sierpi\'nski carpet group}
Let $\alb=\{1,\cdots, 8\}$ be an alphabet. Consider the group $G$ acting on  $\alb^*$  generated by the set $S=\{a, b, c,d\}$, given by the wreath recursion
\begin{alignat*}{1}
a &=(12)(67)(1, 1, a, 1, a, 1, 1, a),\\
b &=(46)(58)(b, b, b, 1, 1, 1, 1, 1),\\
c &=(23)(78)(c, 1, 1, c, 1, c, 1, 1),\\
d &=(14)(35)(1, 1, 1, 1, 1, d, d, d).\\
\end{alignat*}
Its dual Moore diagram is shown in Figure \ref{f-Sierpinski}. All four generators are of order 2.

The limit space $\lims$ can be identified (up to quasisymmetry) with the classical Sierpi\'nski carpet, obtained by successively subdividing a Euclidean square into nine equal squares and removing the middle one. The limit dynamical system $s\colon \lims \to \lims$ can be described by first dilating the carpet by a factor of nine, and then folding it in the natural way, so that each of the eight tiles of the dilated carpet is mapped isometrically onto the original carpet (preserving the orientation  for the four corner tiles, and reversing it for the four remaining ones).

 \begin{figure}
 \centering
     \includegraphics{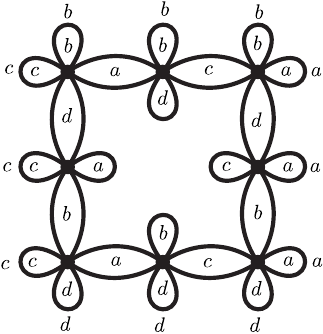}
     \caption{The Sierpi\'nski carpet dual automaton}
     \label{f-Sierpinski}
 \end{figure}
 
 The products $ab, bc, cd$, and $ad$ are of order 6. For example,
 \[ab=(12)(476)(58)(b, b, ba, 1, a, 1, 1, a),\]
 so
 \[(ab)^2=(467)(1, 1, (ba)^2, 1, 1, 1, 1, 1),\]
 and
 \[(ab)^6=(1, 1, (ba)^6, 1, 1, 1, 1, 1).\]
Therefore, $(ab)^6$ acts identically on the first level and on the subtrees $x\xs$ for $x\ne 3$, and as its inverse on the subtree $3\xs$. This shows by induction on the length of words $v\in\xs$ that it acts identically on $\xs$.
 
 We have
 \[ac=(123)(678)(c, a, 1, c, a, c, a, 1),\]
and
 \[(ac)^3=(ac, ca, ac, c, a, ac, ca, ac).\]
This implies that the $ac$-orbit of the word $\underbrace{11\ldots 1}_{\text{$n$ times}}$ has size $3^n$. Consequently, $ac$ has infinite order.
 
Looking at the cycles $(123)$ and $(678)$ of the action of $ac$ on $\alb$ and the corresponding sections, we see that the Thurston's $p$-map $T_p$ satisfies 
\[T_p([ac])=2\cdot 3^{1-p}[ac].\]

It is contracting only when $1-p<\frac{\log 2}{\log 3}$, i.e., for $p>\frac{\log 6}{\log 3}$. Consequently, $ p_c(G, \bim)\ge\frac{\log 6}{\log 3}$, by Theorem~\ref{th:thurstonmap}. In particular, Theorem~\ref{th:critexp} provides the same lower bound for $\cdim(G, \bim)$.
 
 On the other hand, the usual metric on Sierpi\'nski carpet gives the bound $\cdim(G, \bim)\le\frac{\log 8}{\log 3}$. The exact value of $\cdim(G, \bim)$ is still unknown, see some 
 estimates in~\cite{kwapisz}.



\subsection{A non-Liouville contracting group} \label{s-non-Liouville}
Not all contracting self-similar groups are Liouville even for finitely supported generating measures. Let us describe one counter-example.

Consider the group $H$ of affine transformations of the space $\Z_2^3$ (where $\Z_2$ is the ring of dyadic integers) of the form $\vec x\mapsto (-1)^k\vec x+\vec v$, where $\vec v\in\Z^3$ and $k=0, 1$. Note that $H$ has a subgroup of index 2 isomorphic to $\Z^3$ whose action is free,  in particular, it is finitely generated and the orbital Schreier graphs of its action on $\Z_2^3$ are quasi-isometric to $\Z^3$. 

We will denote the points of $\Z_2^3$ as infinite matrices $\left(\begin{array}{ccc}i_{11} & i_{12} & \ldots\\ i_{21} & i_{22} & \ldots\\ i_{31} & i_{32} & \ldots\end{array}\right)$ in such a way that $i_{mn}\in\{0, 1\}$, and if $\left(\begin{array}{ccc} i_{11} & i_{12} & \ldots\\ i_{21} & i_{22} & \ldots\\ i_{31} & i_{32} & \ldots\end{array}\right)$ represents $\vec  x\in\Z_2^3$, and the shifted matrix $\left(\begin{array}{ccc} i_{12} & i_{13} & \ldots\\ i_{22} & i_{23} & \ldots\\ i_{32} & i_{33} & \ldots\end{array}\right)$ represents $\vec y$, then
\[\vec x=-\left(\begin{array}{c}i_1\\ i_2\\ i_3\end{array}\right)+2\vec y.\]
Essentially, we are using the binary numeration system with digits $0$ and $-1$, but write $1$ instead of $-1$.

Denote, for $(i_1, i_2, i_3)\in\{0, 1\}^3$, by $a_{i_1i_2i_3}$ the transformation $\vec x\mapsto -\vec x+\left(\begin{array}{c}i_1\\ i_2\\ i_3\end{array}\right)$. The elements $a_{i_1i_2i_3}$ generate the group $H$. 

We have
\[a_{i_1i_2i_3}\left(-\left(\begin{array}{c}j_1\\ j_2\\ j_3\end{array}\right) +2\vec y\right)=\left(\begin{array}{c}j_1\\ j_2\\ j_3\end{array}\right)-2\vec y+\left(\begin{array}{c}i_1\\ i_2\\ i_3\end{array}\right)=-\left(\begin{array}{c}k_1\\ k_2\\ k_3\end{array}\right)+2\left(-\vec y+\left(\begin{array}{c}l_1\\ l_2\\ l_3\end{array}\right)\right)\]
if $i_t+j_t=2l_t-k_t$. Equivalently, $l_t=\max(i_t, j_t)$ and $k_t$ is $i_t+l_t$ modulo 2.

To shorten notation, let us denote the columns $\left(\begin{array}{c}0\\ 0\\ 0\end{array}\right)$, $\left(\begin{array}{c}0\\ 0\\ 1\end{array}\right)$, $\left(\begin{array}{c}0\\ 1\\ 0\end{array}\right)$,
$\left(\begin{array}{c}0\\ 1\\ 1\end{array}\right)$,
$\left(\begin{array}{c}1\\ 0\\ 0\end{array}\right)$,
$\left(\begin{array}{c}1\\ 0\\ 1\end{array}\right)$,
$\left(\begin{array}{c}1\\ 1\\ 0\end{array}\right)$,
$\left(\begin{array}{c}1\\ 1\\ 1\end{array}\right)$ by
$0, 1, 2, 3, 4, 5, 6, 7$, respectively. Similarly, we will denote the generators $a_{i_1i_2i_3}$ by $a_0, a_1, \ldots, a_7$.
We have then
\begin{align*}
a_0 &=(a_0, a_1, a_2, a_3, a_4, a_5, a_6, a_7),\\
a_1 &=(01)(23)(45)(67)(a_1, a_1, a_3, a_3, a_5, a_5, a_7, a_7),\\
a_2 &=(02)(13)(46)(57)(a_2, a_3, a_2, a_3, a_6, a_7, a_6, a_7),\\
a_3 &=(03)(12)(47)(56)(a_3, a_3, a_3, a_3, a_7, a_7, a_7, a_7),\\
a_4 &=(04)(15)(26)(37)(a_4, a_5, a_6, a_7, a_4, a_5, a_6, a_7),\\
a_5 &=(05)(14)(27)(36)(a_5, a_5, a_7, a_7, a_5, a_5, a_7, a_7),\\
a_6 &=(06)(17)(24)(35)(a_6, a_7, a_6, a_7, a_6, a_7, a_6, a_7),\\
a_7 &=(07)(16)(25)(34)(a_7, a_7, a_7, a_7, a_7, a_7, a_7, a_7).
\end{align*}

Note that $a_0$ is not a section of any of the other generators $a_i$.

The group $H$ acts naturally on the affine space $\R^3$ by the action given by the same formula $\vec x\mapsto (-1)^k\vec x+\vec v$, as for its action on $\Z_2^3$, but with $\vec x\in\R^3$. This action is proper and co-compact. It particular, $H$ can be seen as the fundamental group of the orbifold of the action. The map $\vec x\mapsto 2\vec x$ on $\R^2$ induces a degree 8 self-covering of the orbifold. The fundamental group $H$ with the described action on infinite sequences is the iterated monodromy group of this self-covering. It is a contracting self-similar group, since the self-covering of the orbifold is expanding. For the general theory of iterated monodromy groups of affine orbifolds, see Sections~6.1--6.3 of~\cite{nek:book}. The topological dimension of the orbifold, (which is the limit space of $H$) is 3, hence the conformal dimension is at least 3. Moreover, since the map $\vec x\arr 2\vec x$ is a similarity with respect to the Euclidean metric on $\R^3$, the conformal dimension is not larger than the Hausdorff dimension of $\R^3$; hence, it is equal to 3. This follows, for example, from the results of~\cite{haispilgr:subhyperbolic}.

Let us ``fragment'' $a_0$ into a product of two commuting elements $b$ and $c$ given by
\[b=(b, a_1, 1, 1, 1, 1, 1, 1),\qquad c=(c, 1, a_2, a_3, a_4, a_5, a_6, a_7).\]
Note that $b, c\notin H$ and $bc=a_0$.

Let $G$ be the group generated by $H$ and $b, c$. It follows directly from the formulas that every generator of $G$ acts either as a generator of $H$ or identically. Consequently, the asymptotic equivalence relations on $\xo$ defined by $G$ and $H$ coincide. In particular, the conformal dimension of the limit spaces of $G$ and $H$ are the same.

\begin{prop}
Let $\mu$ be the uniform probability measure on the generating set $\{a_i\}_{i=1, \ldots 7}\cup\{b, c\}$ of $G$. Then the $\mu$-random walk is not Liouville.
\end{prop}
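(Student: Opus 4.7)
The plan is to establish that $G$ is non-amenable, from which the conclusion follows: by the Furstenberg--Kaimanovich--Vershik--Rosenblatt characterization of amenability, every symmetric non-degenerate random walk on a non-amenable group has non-trivial Poisson boundary, so in particular $\mu$ is not Liouville.

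The key structural observation is that $G$ is generated by the virtually abelian group $H$ together with two commuting involutions $b,c$ whose product $bc=a_0$ lies in $H$. Thus $\langle b,c\rangle\cong (\mathbb Z/2)^2$, and its intersection with $H$ contains the common subgroup $\langle a_0\rangle\cong\mathbb Z/2$. These are exactly the defining relations of the amalgamated free product
\[
\widetilde G \;:=\; H\, *_{\langle a_0\rangle}\, \langle b,c\rangle,
\]
so there is a natural surjection $\widetilde G\twoheadrightarrow G$. Since $[H:\langle a_0\rangle]=\infty$ and $[\langle b,c\rangle:\langle a_0\rangle]=2$, the amalgamated product $\widetilde G$ acts non-elementarily on its Bass--Serre tree and, by standard ping-pong on that tree, contains a non-abelian free subgroup; hence $\widetilde G$ is non-amenable. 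It therefore suffices to verify that the canonical map $\widetilde G\to G$ is injective, i.e.\ that the self-similar structure on the tree does not impose any relations beyond those of $\widetilde G$.

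To verify injectivity, I would take a reduced word $w=h_0 x_1 h_1 x_2\cdots x_k h_k$ in the normal form of $\widetilde G$, with $x_i\in\{b,c\}$ and the interior factors $h_j\in H\setminus\langle a_0\rangle$, and trace it through the first-level wreath recursion. Using the explicit forms $b=(b,a_1,1,\dots,1)$ and $c=(c,1,a_2,\dots,a_7)$, each $x_i$ has its only recursive section at the vertex $0$, so the section $w|_0$ inherits an alternating structure $h'_0 x_1 h'_1 x_2\cdots x_k h'_k$ with $h'_j\in H$ a section of $h_j$; the sections at the other vertices collapse to products of elements of $H$ alone. An inductive argument on levels, together with the contracting property of $(G,\bim)$, should then force any non-trivial reduced $w$ to act non-trivially on some finite level, so that $w\neq 1$ in $G$.

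The main obstacle is controlling cancellations in the inductive step: the section $h'_j$ of an interior $h_j\in H\setminus\langle a_0\rangle$ might land in $\langle a_0\rangle$, in which case the induced word is no longer reduced and the induction threatens to collapse. A Britton-style analysis will be needed to show that such collapses either do not occur for generic $w$ or are compatible with the $\widetilde G$ normal form. An alternative route that sidesteps the full injectivity question is to exhibit a free subgroup of $G$ directly via a ping-pong argument on the boundary $\xo$: the odometer-like elements $d_1=ba_1$ (with $(ba_1)|_1=ba_1$) and $d_2=ca_2$ (with $(ca_2)|_2=ca_2$) are both of infinite order, and conjugating one of them by a suitable element of $H$ should produce a pair of elements with disjoint attracting cylinder sets on $\xo$, establishing freeness directly. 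With non-amenability of $G$ in hand, the Furstenberg--Rosenblatt theorem then immediately yields that $\mu$ has non-trivial Poisson boundary, completing the proof.
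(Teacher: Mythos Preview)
Your approach is fundamentally flawed: the map $\widetilde G\to G$ is \emph{not} injective, so the amalgamated-product structure does not survive and $G$ is not shown to be non-amenable. Here is an explicit element of the kernel. The generator $b=(b,a_1,1,\dots,1)$ acts trivially on every cylinder $i\alb^\omega$ with $i\ge 2$, so $\supp(b)\subset\{0,1\}\alb^\omega$. The translation $u:=a_2a_0\in H$ has first-level permutation $(02)(13)(46)(57)$, hence $u(\{0,1\}\alb^\omega)=\{2,3\}\alb^\omega$ and therefore $\supp(ubu^{-1})=u(\supp(b))$ is disjoint from $\supp(b)$. Since $G$ acts faithfully on $\alb^\omega$, this forces $[ubu^{-1},b]=1$ in $G$. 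On the other hand, $u\in H\setminus\langle a_0\rangle$ (it is a nontrivial translation) and $b\in\langle b,c\rangle\setminus\langle a_0\rangle$, so $ubu^{-1}bubu^{-1}b$ is a reduced alternating word in $\widetilde G=H*_{\langle a_0\rangle}\langle b,c\rangle$ and hence nontrivial there. Thus the kernel is nontrivial. More generally, conjugates of $b$ by far-apart translations all commute in $G$, giving it a lamplighter-like flavour; a non-amenable contracting group would resolve the main open problem discussed in the paper's introduction, and this example is certainly not claimed to do so. Your alternative ping-pong sketch has the same problem: such commutation relations obstruct any attempt to find a free subgroup by the route you suggest.

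The paper's proof is entirely different and does not pass through non-amenability. It observes that for $g\in G$, the section $g^{-1}|_{0^n}$ either lies in $H$ for all large $n$ or for none; this defines a map $\chi\colon G\to\{0,1\}$. Along a random-walk trajectory $\gbf_t$, the value $\chi(\gbf_t)$ can change only at a step where the induced walk on the orbit of $000\ldots$ is currently at $000\ldots$ \emph{and} the increment is $b$ or $c$. That orbital Schreier graph is quasi-isometric to $\mathbb Z^3$, so the induced walk is transient and visits $000\ldots$ only finitely often; hence $\chi(\gbf_t)$ stabilises and its limiting value is a nontrivial tail event, proving the Poisson boundary is nontrivial.
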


\begin{proof} It follows directly from the recursion that 
if not all letters of $v\in\xs$ are 0 and the first non-zero letter of $v$ is $1$, then $b(v)=a_0(v)$ and $b|_v=a_0|_v$. If not all letters of $v$ are $0$ and the first non-zero letter is not $1$, then $b(v)=v$ and $b|_v=1$. An analogous statement (but the other way around) is true for $c$.

Define the map $\chi\colon G\arr\{0, 1\}$ by the condition
that $\chi(g)=0$ if $g^{-1}|_{\underbrace{00\ldots 0}_{\text{$n$ times}}}\in H$ for all $n$ large enough, and $\chi(g)=1$ otherwise. 

According to the first paragraph of the proof, for every generator $s\in\{a_i\}_{i=1, \ldots, 7}\cup\{b, c\}$ and every $v\in \alb^\ast$, we have $s|_v\in H$ unless $v=\underbrace{00\ldots 0}_{\text{$n$ times}}$ for some $n$ and $s\in \{b, c\}$. It follows that if $g\in G$ and $s\in\{a_i\}_{i=1, \ldots, 7}\cup\{b, c\}$, then $\chi(g)\ne\chi(sg)$ only if $g(00\ldots)=00\ldots$ and $s\in\{b, c\}$.

Consider the $\mu$-random walk on $G$. For every point in the $G$-orbit of $000\ldots$,  one generator in $\{b, c\}$ acts as $a_0$, and the other acts as the identity. It follows that the induced random walk on the $G$-orbit of $000\ldots$ has the same law as a random walk generated by the measure $\nu$ on the generating set $\{a_i\}_{i=0, 1, \ldots, 7}\cup\{1\}$ of $H$, given by $\nu(a_i)=\frac{1}{9}$ for $i\ge 1$, and $\nu(a_0)=\nu(1)=\frac{1}{18}$. Since the Schreier graph is quasi-isometric to $\mathbb{Z}^3$, and simple random walk on $\mathbb{Z}^3$ is transient by P\'olya's theorem, it follows from the comparison theorem (see, e.g., \cite[Theorem 2.17]{Ly-Pe}) that this random walk is transient. Consequently, it will visit $000\ldots$ finitely many times almost surely, and so the value of $\chi$ along a walk trajectory will eventually be constant. The eventual value of $\chi$ will be a non-trivial event invariant under the tail equivalence relation, hence the Poisson boundary of the $\mu$-random walk on $G$ is non-trivial.
\end{proof}

\bibliographystyle{alpha}

\bibliography{biblio.bib}

\bigskip

{\small

\noindent\textit{Nicol\'as Matte Bon\\
	CNRS \&
	Institut Camille Jordan (ICJ, UMR CNRS 5208)\\
	Universit\'e Lyon 1\\
	43 blvd.\ du 11 novembre 1918,	69622 Villeurbanne,	France\\}
\href{mailto:mattebon@math.univ-lyon1.fr}{mattebon@math.univ-lyon1.fr}

\smallskip

\noindent\textit{Volodymyr Nekrashevych\\
	Department of Mathematics\\
 Texas A\& M University, College Station, TX 77843-3368, United States of America\\}
\href{mailto:nekrash@math.tamu.edu}{nekrash@math.tamu.edu}

\smallskip

\noindent\textit{Tianyi Zheng\\
	Department of Mathematics\\
 University of California, San Diego (UCSD),
9500 Gilman Drive \# 0112,
La Jolla, CA  92093-0112, United States of America\\}
\href{mailto:tzheng2@math.ucsd.edu}{tzheng2@math.ucsd.edu}

}

\end{document}